\theoremstyle{plain}
\newtheorem{theorem}{Theorem}[section]
\newtheorem{proposition}[theorem]{Proposition}
\newtheorem{lemma}[theorem]{Lemma}
\newtheorem{corollary}[theorem]{Corollary}
\theoremstyle{definition}
\newcommand{\appsection}[1]{\let\oldthesection\thesection
\renewcommand{\thesection}{Appendix \oldthesection}
\section{#1}\let\thesection\oldthesection}
\newtheorem{definition}[theorem]{Definition}
\newtheorem{notation}[theorem]{Notation}
\theoremstyle{remark}
\newtheorem{remark}[theorem]{Remark}
\def\D{{\mathbb{D}}}
\def\Z{{\mathbb{Z}}}
\def\F{{\mathbb{F}}}
\def\Q{{\mathbb{Q}}}
\def\C{{\mathbb{C}}}
\def\P{{\mathbb{P}}}
\def\O{{\mathcal{O}}}
\def\X{{\mathcal{X}}}
\begin{document}

\title{Optimal bounds for T-singularities in stable surfaces}
\author[Julie Rana]{Julie Rana}
\email{ranaj@lawrence.edu}
\address{Department of Mathematics, Lawrence University, 711 E. Boldt Way, Appleton WI 54911, USA.}
\author[Giancarlo Urz\'ua]{Giancarlo Urz\'ua}
\email{urzua@mat.uc.cl}
\address{Facultad de Matem\'aticas, Pontificia Universidad Cat\'olica de Chile, Campus San Joaqu\'in, Avenida Vicu\~na Mackenna 4860, Santiago, Chile.}
%
%
%
\date{\today}

\begin{abstract} 
We explicitly bound T-singularities on normal projective surfaces $W$ with one singularity, and $K_W$ ample. This bound depends only on $K_W^2$, and it is optimal when $W$ is not rational. We classify and realize surfaces attaining the bound for each nonnegative Kodaira dimension of the minimal resolution of $W$. This answers effectiveness of bounds (see \cite{A94}, \cite{AM04}, \cite{L99}) for those surfaces.
\end{abstract}

\maketitle

\section{Introduction} \label{intro}

Koll\'ar and Shepherd-Barron introduced in \cite{KSB88} a natural compactification of the Gieseker moduli space of surfaces of general type with fixed $K^2$ and $\chi$ \cite{Gie77}, which is analogous to the Deligne-Mumford compactification of the moduli space of curves of genus $g \geq 2$ \cite{DM69}. This compactification is coarsely represented by a projective scheme \cite{K90} because of Alexeev's proof of boundedness \cite{A94} (see also \cite{AM04}). Thus we have a proper KSBA moduli space of stable surfaces, which includes classical canonical surfaces of general type. In particular, after fixing $K^2, \chi$ we have a finite list of singularities appearing on stable surfaces. It is a hard problem to write down that finite list explicitly (see \cite[Problem 1.24.3]{K17}).

Among the singularities that are allowed in stable surfaces, we have cyclic quotient singularities $\frac{1}{m}(1,q)$. These are defined as the germ at the origin of the quotient of $\C^2$ by the action $(x,y)\mapsto (\mu x, \mu^q y)$, where $\mu$ is a primitive $m$-th root of $1$, and $q$ is an integer with $0<q<m$ and gcd$(q,m)=1$. Among them, a very important class is formed by the ones which admit a $\Q$-Gorenstein smoothing \cite[Proposition 5.9]{LW86}, since they are precisely, the singularities showing up in a normal degeneration of canonical surfaces in the KSBA compactification \cite[Section 3]{KSB88}. These singularities are $\frac{1}{dn^2}(1,dna-1)$ with gcd$(n,a)=1$, and together with all Du Val singularities they are called T-singularities \cite[Section 3]{KSB88}. The $\Q$-Gorenstein smoothings of a T-singularity $\frac{1}{dn^2}(1,dna-1)$ occur in one $d$-dimensional component of its versal deformation space.

Let $W$ be a normal projective surface with one T-singularity $\frac{1}{dn^2}(1,dna-1)$ where $n>1$ (i.e. non Du Val), and $K_W$ ample. In particular $W$ is a stable surface. Assume that there are no-local-to-global obstructions to deform the singular point. Then this surface describes a codimension $d$ variety in the closure of the Gieseker moduli space of surfaces of general type with $K_W^2$ and $\chi(\O_W)$ fixed \cite{H11}. Thus for $d=1$ we obtain divisors. The purpose of this article is to optimally bound the T-singularity $\frac{1}{dn^2}(1,dna-1)$ in $W$ as a function of $K_W^2$, with no assumptions on existence of $\Q$-Gorenstein smoothings.

Let $$ \frac{dn^2}{dna-1} = b_1 - \frac{1}{b_2 - \frac{1}{\ddots - \frac{1}{b_r}}} =: [b_1, \ldots ,b_r]$$ be the Hirzebruch-Jung continued fraction associated to the T-singularity. We define its \textit{length} as $r$, and so it is the number of exceptional curves in its minimal resolution. This continued fraction has a very particular form \cite[Proposition 3.11]{KSB88}. The index of the T-singularity is $n$, and it satisfies $$ n \leq \textit{F}_{r-d}$$ where $\textit{F}_i$ is the $i$-th Fibonacci number defined by the recursion $\textit{F}_{-2}=1$, $\textit{F}_{-1}=1$, and $$\textit{F}_i = \textit{F}_{i-1} + \textit{F}_{i-2} $$ for $i \geq 0$. (This can be deduced from \cite[Lemma 3.4]{S89}.) That inequality is optimal, in the sense that equality is possible in infinitely many (and specific) cases; if $d=1$, these have the form $[3,\ldots,3,5,3,\ldots,3,2]$. Therefore, to bound T-singularities through the index, it is enough to bound $r-d$.

Let us consider the diagram $$ \xymatrix{  & X  \ar[ld]_{\pi} \ar[rd]^{\phi} &  \\ S &  & W}$$ where the morphism $\phi$ is the minimal resolution of $W$, and $\pi$ is a composition of blow-ups such that $S$ has no $(-1)$-curves. The best known bound in the literature is $$r \leq 400(K_W^2)^4$$ for $d=1$ and $S$ of general type, due to Y. Lee \cite[Theorem 23]{L99}. In \cite[Theorem 1.1]{R14} the first author gives the bound $r \leq 2$ when $d=1$, $K_W^2-K_S^2=1$, and $S$ is of general type. In this article we prove the following.

\begin{theorem}
Let $\kappa(S)$ be the Kodaira dimension of $S$.
\begin{itemize}
\item[1.] If $\kappa(S)=0$, then $r-d \leq 4 K_W^2$.

\item[2.] If $\kappa(S)=1$, then $r-d \leq 4 K_W^2-2$.

\item[3.] If $\kappa(S)=2$, then $$r-d \leq 4 (K_W^2-K_S^2)-4$$ when $K_W^2-K_S^2>1$, $r-d \leq 1$ otherwise.
\end{itemize}

In these three cases the bounds are optimal.
\label{neff}
\end{theorem}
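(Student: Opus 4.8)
The plan is to convert the theorem into an upper bound for the number of blow-ups contained in $\pi$, and then to extract that bound from the ampleness of $K_W$.

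\medskip

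\emph{Step 1 (local reduction).} Write $K_X=\phi^*K_W+\sum_i a_iE_i$ with discrepancies $-1<a_i<0$, and set $\delta:=K_W^2-K_X^2=-(\sum_i a_iE_i)^2=\sum_i(-a_i)(b_i-2)>0$. I claim that for a non--du Val T-singularity $\frac{1}{dn^2}(1,dna-1)$ one always has $\delta=(r-d)+1$. This is purely local: by \cite[Proposition 3.11]{KSB88} the chain $[b_1,\dots,b_r]$ is obtained from $[4]$ or from $[3,2,\dots,2,3]$ by iterating the two moves $[b_1,\dots,b_r]\mapsto[2,b_1,\dots,b_{r-1},b_r+1]$ and $[b_1,\dots,b_r]\mapsto[b_1+1,b_2,\dots,b_r,2]$; each move keeps $d$ fixed, raises $r$ by one, and --- by a short computation with the discrepancy system --- raises $\delta$ by one, while $\delta=1=(r-d)+1$ on the base chains. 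Setting $\ell:=K_S^2-K_X^2\ge 0$ (the number of blow-ups composing $\pi$) and $k:=K_W^2-K_S^2$, this gives
$$ r-d \;=\; K_W^2-K_X^2-1 \;=\; k+\ell-1 .$$
One also records $k\ge 0$ (compare $K_W^2=\phi^*K_W\cdot K_X=\phi^*K_W\cdot\pi^*K_S+\phi^*K_W\cdot\sum m_jF_j\ge K_S^2$, using $K_S$ nef and $K_X=\pi^*K_S+\sum m_jF_j$ with $m_j\ge 1$). Since $\kappa(S)\le1$ forces $K_S^2=0$ (so $k=K_W^2$), the three statements become: $\ell\le 3K_W^2+1$ if $\kappa(S)=0$; $\ell\le 3K_W^2-1$ if $\kappa(S)=1$; and $\ell\le 3k-3$ if $\kappa(S)=2$ and $k>1$, while $\ell\le 2-k$ if $\kappa(S)=2$ and $k\le1$.

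\medskip

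\emph{Step 2 (bounding $\ell$).} The engine is that $\phi^*K_W=K_X+\Delta$ with $\Delta=\sum_i(-a_i)E_i$, $0<-a_i<1$, is nef, big, and \emph{strictly} positive on every curve not contracted by $\phi$. In particular for each $(-1)$-curve $C\subset X$ we get $0<\phi^*K_W\cdot C=-1+\sum_i(-a_i)(E_i\cdot C)$, which forces $\Gamma\cdot C\ge2$ (with $\Gamma:=\sum_iE_i$), and more when $C$ meets an end of the chain of small discrepancy. Factoring $\pi$ as $X=X_\ell\to\cdots\to X_0=S$, one follows the images of $\Gamma$ and of the nef class $\phi^*K_W$ through the contractions, and uses this together with the combinatorics of how the self-intersection sequence $[b_1,\dots,b_r]$ can be produced by successive blow-ups along curves forced to meet $\Gamma$, to bound the number of contractions linearly in $k$. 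The argument bifurcates according to $\kappa(S)$, because the configurations of curves available on $S$ differ --- patterns of $(-1)$- and $(-2)$-curves for $\kappa(S)=0$, fibres of the elliptic fibration for $\kappa(S)=1$, Hodge-index restrictions against the big nef class $K_S$ for $\kappa(S)=2$ --- and this is exactly what produces the constants $4K_W^2$, $4K_W^2-2$, $4(K_W^2-K_S^2)-4$. In the case $\kappa(S)=2$, $k=1$ this recovers \cite[Theorem 1.1]{R14}, and the whole step makes effective the bound of \cite[Theorem 23]{L99}.

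\medskip

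\emph{Step 3 (optimality).} For each $\kappa(S)$ one exhibits $W$ meeting the bound: beginning from an appropriate minimal $S$, blow up the maximal permitted number of (mostly infinitely near) points so that $X$ carries a T-chain $\Gamma$ with $r-d$ equal to the bound and with $K_X+\Delta$ nef, big, and strictly positive off $\Gamma$; then $\phi$ contracts exactly $\Gamma$, one verifies $K_W$ is ample, and computes $K_W^2$ (respectively $K_W^2-K_S^2$). The chains that occur are of the ``long'' type $[3,\dots,3,5,3,\dots,3,2]$ and close relatives.

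\medskip

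The main obstacle is Step 2: producing, uniformly and with the \emph{exact} constants, a bound on how much the blow-down sequence $X\to S$ can lengthen the exceptional T-chain. The crude inequality $\Gamma\cdot C\ge2$ for $(-1)$-curves is far from enough; one has to track the $K^2$-cost of each lengthening precisely and rule out the configurations that would over-lengthen $\Gamma$ relative to $K_W^2-K_S^2$ --- and then match these constraints with the constructions of Step 3 to obtain sharpness.
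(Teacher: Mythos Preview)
Your Step~1 is fine and matches the paper's bookkeeping: the identity $K_W^2-K_X^2=r-d+1$, equivalently $r-d=k+\ell-1$ with $k=K_W^2-K_S^2$ and $\ell=m$, is exactly the relation $K_S^2-m+r-d+1=K_W^2$ used throughout.

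The genuine gap is Step~2, and you say so yourself in your final paragraph. What you have written there is not a proof but a description of what a proof would have to accomplish; the phrases ``one follows the images of $\Gamma$ \dots\ and uses this together with the combinatorics'' and ``this is exactly what produces the constants'' do not supply the mechanism. The paper's argument has a specific structure that your outline does not anticipate. First, one computes $\big(\sum_i E_i\big)\cdot C = r-d+2-\lambda$ with $\lambda=K_S\cdot\pi(C)$, and shows $E_i\cdot C\ge 1$ for every $i$. The heart of the proof is then a classification: $E_i\cdot C=1$ forces $E_i$ to have one of two very specific ``long diagram'' shapes, in which a $(-1)$-curve links the terminal run of $(-2)$-curves in the T-chain to the rest. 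For type~I there is at most one such $E_i$; for type~II there are exactly $s$ of them, where $s$ is the length of that terminal $(-2)$-run. This yields $r-d\le 2(K_W^2-K_S^2)+\delta-\lambda$ with $\delta\in\{0,1,s\}$. The constants $4K_W^2$, $4K_W^2-2$, $4(K_W^2-K_S^2)-4$ then come from a separate lemma bounding $s$ against $r-d$ (roughly $2s\le r-d$, with a refinement when $\kappa(S)=2$), proved by a case analysis on the possible T-chain shapes $[2,\dots,2,x_1,\dots,x_{r-s-1},s+2]$ and on where the $(-1)$-curve attaches. None of this is hinted at in your Step~2; in particular the distinction between ``no long diagram / type I / type II'' is what produces the factor~$4$ rather than~$2$, and your sketch gives no reason to expect that trichotomy.

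Your Step~3 is also off: the optimal chains are not of the Fibonacci shape $[3,\dots,3,5,3,\dots,3,2]$. For $\kappa(S)=0$ the paper finds $[2,2,6,2,4]$ and $[2,\dots,2,3,2,\dots,2,2k+3,2k+2]$; for $\kappa(S)=1$ one gets $[2,5,3]$ and $[2,\dots,2,3,2,\dots,2,s+3,s+2]$; for $\kappa(S)=2$ one gets $[2,5]$, $[2,3,2,\dots,2,4]$, $[2,7,2,2,3]$, $[2,3,2,6,3]$. These all arise from the type~II long-diagram analysis, and the optimal $S$ are built as specific K3/Enriques surfaces, elliptic surfaces, or double covers of $\mathbb{F}_2$ --- not by generic blow-ups of infinitely near points.
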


\begin{remark}
Let $W$ be a normal projective surface with only T-singularities, and $K_W$ ample. Assume that $W$ is not rational and that there is a $\Q$-Gorenstein deformation $(W \subset \X) \to (0 \in \D)$ over a smooth curve germ $\D$ which is trivial for one non Du Val T-singularity of $W$, and a smoothing for all the rest. Thus the general fibre $W'$ has $K_{W'}$ ample, and it has one T-singularity $\frac{1}{dn^2}(1,dna-1)$ of length $r$. Then we can bound $r-d$ as in Theorem \ref{neff} since $\kappa(S) \leq \kappa(S')$, where $S'$ is the minimal model of the minimal resolution of $W'$. This can be proved by means of the stable MMP \cite{HTU17}, and the hierarchy of Kodaira dimensions in \cite[Lemma 2.4]{K92}. We remark that in any case the bound can be taken as $4K_W^2$, but one can be precise after performing MMP. See Corollary \ref{corKawa} for details. An instance of this is a $W$ with no local-to-global obstructions, as in the Lee-Park examples \cite{LP07} (see also \cite{SU14}).
\label{kawa}
\end{remark}

We observe that $d$ can be bounded by $\chi$ and $K^2$ via the log-Bogomolov-Miyaoka-Yau inequality (see e.g. \cite{La03}) as $$d -\frac{1}{dn^2} \leq 12 \chi(\O_W) - \frac{4}{3} K_W^2,$$ since $12 \chi(\O_W) = K_W^2 +\chi_{top}(W) + d-1$ (see e.g. \cite{HP2010}) \footnote{By a similar argument, the log-Bogomolov-Miyaoka-Yau inequality bounds the number of singularities on a surface $W$ with only T-singularities by $\frac{16}{9}(9\chi(\O_W)-K_W^2$). See also \cite[Theorem 10]{L99}.}. Also, $\chi(\O_W)$ can be bounded by $K_W^2$ via the generalized Noether's inequality in \cite[Theorem 2.10]{TZ92}. Thus, we are essentially bounding the length $r$ of the T-singularity as a linear function of $K_W^2$.

In the proof of such bounds, we will see that except for one specific situation, which involves a particular incidence between a $(-1)$-curve and the exceptional divisor of $\phi$ (a long diagram, see Definition \ref{longdiagram}), we have the improved bounds:

\[
r-d \leq
     \begin{cases}
       2K_W^2 &\quad\text{if } \kappa(S)=0 \\
       2K_W^2 - 1 &\quad\text{if } \kappa(S)=1 \\
       2(K_W^2 -K_S^2) - 1 &\quad\text{if } \kappa(S)=2 \\

     \end{cases}
\]
\vspace{0.3cm}

For the remaining case, where $K_S$ is not nef, we prove the following.

\begin{theorem}
Let $C$ be the exceptional divisor of $\phi$. If $K_S$ is not nef, then $S$ must be rational, and
\[
r-d \leq
     \begin{cases}
       2(K_W^2 -K_S^2) - K_S \cdot \pi(C) &\quad\text{if no long diagram}\\
       2(K_W^2 -K_S^2) + 1 - K_S \cdot \pi(C) &\quad\text{if long diagram of type I} \\
       4 (K_W^2 -K_S^2) - 2 K_S \cdot \pi(C) &\quad\text{if long diagram of type II} \\

     \end{cases}
\]
\end{theorem}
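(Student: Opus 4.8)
The plan is to turn all three bounds into one intersection estimate and then control that estimate by running the blow-up sequence $\pi$ one $(-1)$-curve at a time; the long-diagram configurations are exactly the steps where this is not immediate. \textbf{A numerical identity.} Write $\phi^{*}K_W=K_X+\Delta$ with $\Delta\ge 0$ supported on the chain $C=E_1+\dots+E_r$; adjunction gives $K_X\cdot E_i=b_i-2$, and the recursive description of T-strings in \cite[Proposition 3.11]{KSB88} (every T-string is obtained from $[4]$ or $[3,2,\dots,2,3]$ by the two extension moves, each of which raises $r-d$ and $\sum(b_i-2)$ by one) gives
\[
K_X\cdot C=\sum_{i=1}^{r}(b_i-2)=r-d+2 .
\]
Realize $\pi$ as $X=X_0\to X_1\to\cdots\to X_k=S$, with $X_i\to X_{i+1}$ contracting a $(-1)$-curve $L_i$, and let $C_i$ be the image of $C$ in $X_i$, so $C_0=C$ and $C_k=\pi(C)$. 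A direct computation gives $K_{X_{i+1}}\cdot C_{i+1}=K_{X_i}\cdot C_i-L_i\cdot C_i$, and telescoping yields
\[
r-d=K_S\cdot\pi(C)-2+\sum_{i=0}^{k-1}L_i\cdot C_i .
\]
Since $K_W^2-K_S^2=-k-\Delta^2$ (one has $k=K_S^2-K_X^2$ and $K_X^2-K_W^2=\Delta^2$, with $\Delta^2<0$), each inequality of the theorem is equivalent to an upper bound on $\sum_i L_i\cdot C_i$; e.g.\ the ``no long diagram'' bound reads $\sum_i L_i\cdot C_i\le -2k-2\Delta^2-2K_S\cdot\pi(C)+2$.

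\textbf{Rationality of $S$.} As $S$ is minimal with $\kappa(S)=-\infty$, it is $\P^2$ or a geometrically ruled surface $S\to B$. If $g(B)\ge 1$, then each component of $C$, being rational, maps to a point of $B$, so $\pi(C)$ lies in a single fibre; a general fibre $F$ is then disjoint from $\pi(C)$ and from the centres of $\pi$, so its strict transform $F'$ satisfies $\phi^{*}K_W\cdot F'=K_X\cdot F'=K_S\cdot F=-2$, contradicting the nefness of $\phi^{*}K_W$. Hence $B\cong\P^1$ and $S\cong\P^2$ or $\F_e$; the same computation shows that if $S\cong\F_e$ then $\pi(C)$ has a component meeting the general fibre.

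\textbf{The estimate and the long diagrams.} One analyses the contraction of each $L_i$ according to the incidence of $L_i$ with $C_i$. When $L_i\cdot C_i\le 1$, or when $L_i\cdot C_i=2$ but no component of $C_i$ becomes a $(-1)$-curve, the string type of the configuration survives and the contribution to $\sum_i L_i\cdot C_i$ is compensated by the growth of $-\Delta^2$ and of $-K_S\cdot\pi(C)$: for $S=\P^2$ one uses that $\deg\pi(C)$ must grow with the number of singular points resolved in passing from $\pi(C)$ to $C$, so that $-K_S\cdot\pi(C)=3\deg\pi(C)$ absorbs the blow-ups, and for $S=\F_e$ the analogous statement for the horizontal and vertical parts of $\pi(C)$. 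The remaining possibility, a $(-1)$-curve $L$ with $L\cdot C=2$ whose contraction turns a component of $C$ into a $(-1)$-curve and thereby forces a cascade of contractions eating into the chain, is precisely the long diagram of Definition \ref{longdiagram}. A Hirzebruch--Jung continued-fraction computation of how the cascade shortens $C_i$ and alters $\pi(C)$ should show that a type I long diagram unfolds only a short sub-string, costing one extra unit (the ``$+1$''), while a type II long diagram unfolds a sub-string to full length, doubling the linear coefficient and producing $4(K_W^2-K_S^2)-2K_S\cdot\pi(C)$.

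\textbf{Main obstacle.} The crux is the type II case: after contracting $L$ with $L\cdot C=2$ one must bound the length of the induced cascade and control how each of its steps changes $\pi(C)$, $\Delta^2$ and $k$, so that the net effect on $r-d$ relative to $K_W^2-K_S^2$ and $K_S\cdot\pi(C)$ is at worst a doubling and no more; this is essentially a self-referential statement about which sub-strings of a T-string are again ``almost T'' and how their lengths behave under blow-down, and it is where the transition from coefficient $2$ to $4$ (and from $-K_S\cdot\pi(C)$ to $-2K_S\cdot\pi(C)$) is forced. A subsidiary difficulty is to exclude, or estimate separately, the remaining incidences --- $L$ tangent to $C$, or $L\cdot C\ge 3$, or several such $(-1)$-curves whose contractions interact --- which are controlled via the negative-definiteness of $C$ and the ampleness of $K_W$, but whose combined treatment is the technically delicate part.
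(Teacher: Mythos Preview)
Your numerical identity $r-d=K_S\cdot\pi(C)-2+\sum_i L_i\cdot C_i$ is correct and is exactly Lemma~\ref{int} rephrased (by the projection formula your $L_i\cdot C_i$ equals the paper's $E_{m-i}\cdot C$). The rationality argument is also fine and matches Proposition~\ref{type}.

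The gap is in the estimate. The first two inequalities require no new work: Theorem~\ref{main2}, valid for arbitrary $S$, already gives $r-d\le 2(K_W^2-K_S^2)+\delta-\lambda$ with $\delta\in\{0,1\}$ in the no-long-diagram and type~I cases, and that theorem rests on showing each $E_i\cdot C\ge 2$ except for $\delta$ of them (Lemmas~\ref{main1} and~\ref{main}). Your alternative ``compensation'' argument is not a proof: $\Delta^2$ and $K_S\cdot\pi(C)$ are fixed once $W$, $S$, $\pi$ are fixed, so nothing ``grows'' along the blow-down, and the assertion that $\deg\pi(C)$ ``absorbs the blow-ups'' has no content as stated.

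The real content is the type~II bound, and here your proposal is only a heuristic. What must be shown is $2s\le r-d$, where $s$ is the length of the terminal $(-2)$-tail; combined with Theorem~\ref{main2} this yields the factor~$4$. In the nef case (Lemma~\ref{nefestimate}) this came from $\alpha\ge s+4$ for the curve $\Gamma$ meeting $F$, forced by $K_S\cdot\Gamma\ge 0$. When $K_S$ is not nef that constraint vanishes, and the paper replaces it by genuine geometry: after contracting $F$ and the $(-2)$-tail one produces either a nodal rational curve of positive self-intersection (hence an elliptic pencil) or a chain contracting to a $0$-curve (hence a $\P^1$-fibration), and the remaining components of $C$ are then analysed as sections and fibre components of that fibration, with ampleness of $K_W$ ruling out the stray $(-1)$-curves that would otherwise appear. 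Several specific T-chains (e.g.\ $[2,\ldots,2,s_1,s_2,\alpha,s_3]$ and $[2,\ldots,2,3,2,\ldots,2,s+2,s+2]$) must be excluded one by one. A continued-fraction computation by itself cannot produce the needed lower bound on $\alpha$; the fibration argument is essential, and nothing in your sketch substitutes for it.
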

\vspace{0.3cm}

The intersection $K_S \cdot \pi(C)$ is negative, and so these inequalities depend indeed on that number. If we fix $K_S \cdot \pi(C)$ for the case of $S=\P^2$ (i.e. we fix the degree of the plane curve $\pi(C)$), then we can provide examples attaining the bound (see Remark \ref{exrat}). We can also give examples where $W$ is fixed (and so everything else except $\pi$) but $-K_{\P^2} \cdot \pi(C)$ tends to infinity; see Lemma \ref{unbounded} and the example after that. By Alexeev's boundedness, the minimal intersection number $-K_{\P^2} \cdot \pi(C)$ under Cremona transformations is bounded.

In relation to optimality, we give a classification in Section \ref{optimal examples} of the surfaces which achieve the bounds above for each nonnegative Kodaira dimension.

In Subsection \ref{k0} we classify the surfaces with $\kappa(S)=0$ attaining equality in Theorem \ref{neff}. They are special K3 and Enriques surfaces with a particular configuration of curves. In each of these cases we find a realizable example, and in two of them we have no local-to-global obstructions to deform. They produce via $\Q$-Gorenstein smoothings Godeaux surfaces with fundamental group $\Z/2$.

In Subsection \ref{k1} we list the five special types of elliptic surfaces with $\kappa=1$ which reach equality in Theorem \ref{neff}, and the corresponding configurations of curves. We realize one of the five cases, which gives construction of normal stable surfaces $W$ with one singularity $\frac{1}{25}(1,9)$, $p_g(W)=2$, $q(W)=0$, and $K_W^2=1$. There is a recent study of stable surfaces for those invariants in \cite{FPR17}, and this example seems to be new. The surface $W$ has obstructions, and so we do not know if it is $\Q$-Gorenstein smoothable.

In Subsection \ref{k2} we list surfaces with $\kappa(S)=2$ attaining equality in Theorem \ref{neff}. These are divided into four cases. We realize all of them. For the first case, which depends on a parameter $t \geq 5$, we obtain a $W$ with invariants $q(W)=0$, $p_g(W)=2t-7$, and $K_{W}^2=4(t-4)+1$. The corresponding surface $S$ satisfies $K_S^2/\chi_{\text{top}}(S)= \frac{t-4}{5t-14}$. We do not know if $W$ has $\Q$-Gorenstein smoothings. For the second case we obtain surfaces $W$ with $K_W^2=2$, $p_g=2$, $q(W)=0$, and T-singularity $\frac{1}{18\mu}(1,6\mu-1)$ for each $\mu=2,3,4,5$, where $d=2\mu$. For the third case we obtain a $W$ with $K_W^2=3$, $p_g(W)=2$, $q(W)=0$, T-singularity $\frac{1}{81}(1,35)$, and local-to-global obstructions. The surface $S$ is of general type with $K_S^2=1$. For the fourth case we obtain surfaces $W$ with $K_W^2=2$, $p_g=2$, $q(W)=0$, and T-singularity $\frac{1}{121}(1,43)$.




\subsection*{Acknowledgements}
We would like to thank Valery Alexeev, Igor Dolgachev, and Paul Hacking for several useful conversations, and for their interest in this work. After writing the present article, Jonny Evans and Ivan Smith informed us that they have found similar bounds for Wahl singularities and when $p_g>0$ in the symplectic setting \cite{ES17}. The second author was supported by the FONDECYT regular grant 1150068.

\section{Bounding} \label{bounding}

As in the introduction, let $W$ be a normal projective surface with one T-singularity $\frac{1}{dn^2}(1,dna-1)$ where $n>1$ (i.e. non Du Val), and $K_W$ ample. We consider the diagram $$ \xymatrix{  & X  \ar[ld]_{\pi} \ar[rd]^{\phi} &  \\ S &  & W}$$ where the morphism $\phi$ is the minimal resolution of $W$, and $\pi$ is a composition of $m$ blow-ups such that $S$ has no $(-1)$-curves.

We use the same notation as in \cite[Sect.2]{R14}. Let $E_i$ be the pull-back divisor in $X$ of the $i$-th point blown-up through $\pi$. Therefore, $E_i$ is a tree of $\P^1$'s, and it may not be reduced. Let $$C=C_1+\ldots+C_r$$ be the exceptional (reduced) divisor of $\phi$. We have $$K_S^2-m+r-d+1=K_W^2.$$

\begin{remark}
Throughout this paper, we will assume that $m>0$, since otherwise $r-d=K_W^2-K_S^2-1$, and this case holds in our main theorems.  
\end{remark}

\begin{proposition}
The divisor $\pi(C)$ is neither a tree of curves nor $\emptyset$. In particular $\kappa(S)=1,2$ implies $K_S \cdot \pi(C) \geq 1$. \label{hahah}
\end{proposition}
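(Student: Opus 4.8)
The argument rests on two standing facts. First, $\phi$ is the \emph{minimal} resolution, so no $C_i$ is a $(-1)$-curve. Second, $W$ has a quotient (hence klt, but not canonical since $n>1$) singularity, so writing $K_X=\phi^{*}K_W+\sum_i a_iC_i$ one has $-1<a_i\le 0$ for every $i$; equivalently $0\le|a_i|<1$, with some $|a_i|>0$.

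Now suppose $m\ge 1$ and let $\Gamma\subset X$ be the $(-1)$-curve contracted by the first blow-down in the given factorization of $\pi$. Then $\Gamma\ne C_i$ for all $i$, so $\Gamma$ is not $\phi$-exceptional, and ampleness of $K_W$ gives
\[
0<\phi^{*}K_W\cdot\Gamma=K_X\cdot\Gamma-\sum_i a_i(C_i\cdot\Gamma)=-1+\sum_i|a_i|\,(C_i\cdot\Gamma),
\]
whence $\sum_i|a_i|(C_i\cdot\Gamma)>1$. Because each $|a_i|<1$, this forces $\Gamma$ to meet $C$ either in two or more distinct components, or in a single component $C_j$ with $\Gamma\cdot C_j\ge 2$ (in particular $\Gamma$ does meet $C$). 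Contracting $\Gamma$ now introduces, into the image of the chain $C=C_1+\dots+C_r$, one of the following: a singular component (if $\Gamma\cdot C_j\ge 2$); or, if $\Gamma$ meets $C_j$ and $C_k$ with $j\ne k$, a multiple intersection (when $|j-k|=1$) or a loop in the dual graph (when $|j-k|>1$, using the sub-chain of $C$ joining $C_j$ to $C_k$). I claim none of these features can be undone by the remaining blow-downs of $\pi$, so they descend to $\pi(C)$; hence $\pi(C)$ is not a tree of curves. This also yields $\pi(C)\neq\emptyset$: if $C$ were entirely $\pi$-exceptional it would be a \emph{proper} sub-configuration (proper, since the singularity is not smooth) of the $\pi$-exceptional locus over a point of $S$ --- a blow-up tree, i.e.\ a tree of smooth rational curves with normal crossings --- contradicting the incidence just forced by $\Gamma$. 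Finally, when $m=0$ one has $\pi(C)=C$, a Hirzebruch--Jung chain in which some $b_i\ge 3$, which is neither empty nor (a sub-configuration of) an A--D--E configuration.

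For the last statement, assume $\kappa(S)\in\{1,2\}$; then $S$ is minimal, $K_S$ is nef, and as $\pi(C)$ is a nonzero effective divisor, $K_S\cdot\pi(C)\ge 0$. Suppose $K_S\cdot\pi(C)=0$. If $\kappa(S)=2$, every component of $\pi(C)$ is $K_S$-trivial, hence a $(-2)$-curve; being connected, $\pi(C)$ is then a single A--D--E configuration, i.e.\ a tree of curves, contradicting the first part. If $\kappa(S)=1$, then $\pi(C)$ lies in a fibre $F_0$ of the Iitaka elliptic fibration $f\colon S\to B$. Consider $\pi^{*}F_0$: it is effective, its support contains $C$, and $(\pi^{*}F_0)^2=F_0^2=0$. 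On the other hand $\phi^{*}K_W$ is nef and big, and for an irreducible curve $D\subset X$ one has $\phi^{*}K_W\cdot D=0$ if and only if $D$ is a component of $C$ (since $\phi$ contracts exactly $C$ and $K_W$ is ample). Using $\pi(C)\subset F_0$ one computes $\phi^{*}K_W\cdot\pi^{*}F_0=K_S\cdot F_0=0$, so every component of the effective divisor $\pi^{*}F_0$ is a component of $C$; but then $\pi^{*}F_0$ is a nonzero effective divisor supported on the negative-definite configuration $C$, forcing $(\pi^{*}F_0)^2<0$, a contradiction. Hence $K_S\cdot\pi(C)\ge 1$.

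The step I expect to be the real obstacle is the claim in the middle paragraph that a bad feature created in the image of $C$ by contracting the first $(-1)$-curve persists through all the remaining blow-downs of $\pi$ (and, relatedly, pinning down the precise meaning of ``tree of curves'' so the degenerate cases are cleanly excluded). I expect the honest treatment to run this through the explicit combinatorics of T-singularity Hirzebruch--Jung chains --- in particular the identity $a_1+a_r=-1$ for the discrepancies at the two ends --- applied to each successive $(-1)$-curve contracted by $\pi$.
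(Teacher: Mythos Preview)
Your approach is sound but runs in the opposite direction from the paper's. You contract the first $(-1)$-curve $\Gamma$ and track how the image of $C$ acquires a non-tree feature, then worry about persistence under the remaining blow-downs. The paper instead argues backward: if $\pi(C)$ were an SNC tree, then the total transform under any sequence of point blow-ups would remain an SNC tree $T$ in $X$ containing $C$; the final exceptional $(-1)$-curve $E_m$ (not in $C$, since $\phi$ is minimal) is then either a leaf of $T$, or arises from blowing up a node and hence separates $T\setminus E_m$ into two pieces. In either case, since $C$ is \emph{connected}, $E_m$ meets $C$ in at most one point --- contradicting your own discrepancy inequality $\sum_i|a_i|(C_i\cdot E_m)>1$. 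This sidesteps the persistence issue entirely and requires none of the T-chain discrepancy identities you anticipated; the argument is more elementary than you expected.

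Your persistence claim is in fact true (a blow-down cannot turn a connected non-tree configuration into an SNC tree of smooth rational curves --- one can see this via arithmetic genus), but proving it carefully is more work than the paper's route. Your $m=0$ sentence is slightly off: when $m=0$ one has $\pi(C)=C$, which \emph{is} a chain and hence literally a tree; what survives is that $C$ is not an ADE configuration (some $b_i\ge 3$), and that is all the second sentence of the proposition needs. The paper's own proof tacitly assumes $m\ge 1$ at this step as well.

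For $\kappa(S)=1$, your argument via negative definiteness of $C$ (the support of $\pi^{*}F_0$ is forced into $C$, yet $(\pi^{*}F_0)^2=0$) is correct and a bit different from the paper's, which simply observes that a general fibre, being disjoint from $C$ in $X$, pushes forward to a curve on $W$ with $K_W$-degree zero, contradicting ampleness. Both work; yours is marginally more self-contained.
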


\begin{proof}
Notice that $\pi(C)=\emptyset$ implies existence of $(-1)$-curve intersecting $C$ at one (or zero) point. But the image of such a curve in $W$ would intersect $K_W$ negatively, because a $T$-singularity is log terminal.

If $\pi(C)$ is a tree of curves, then we must consider blow-ups over a smooth point of the tree or over a node of the tree. Over a smooth point of the tree we will get eventually a $(-1)$-curve intersecting at one (or zero) point $C$, which is not possible. Over a node, since $C$ is connected, we would have to eventually have again a $(-1)$-curve intersecting at one (or zero) point $C$.

If $\kappa(S)=1$, then $K_S \cdot \pi(C)=0$ would mean that $\pi(C)$ is on a fiber of the elliptic fibration. But then the general fiber would trivially intersect $K_W$, which is not possible. If $\kappa(S)=2$, then $K_S \cdot \pi(C)=0$ would mean that $\pi(C)$ is a ADE configuration or $\emptyset$, but none of them are possible.
\end{proof}

\begin{proposition}
The surface $S$ satisfies one of the following:

\begin{itemize}
\item[1.] It is rational.

\item[2.] It is either a K3 surface or an Enriques surface.

\item[3.] It has $\kappa(S)=1$ and $q(S)=0$.

\item[4.] It is of general type with $K_S^2 < K_W^2$.
\end{itemize}
\label{type}
\end{proposition}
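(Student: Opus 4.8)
The plan is to run through the Enriques--Kodaira classification of the minimal surface $S$ (which, having no $(-1)$-curves, is either a minimal model of its birational class or a minimal rational or ruled surface) and eliminate every type not on the list, i.e.\ surfaces ruled over a curve of genus $\ge 1$, abelian surfaces, bielliptic surfaces, and properly elliptic surfaces with $q(S)>0$; for $\kappa(S)=2$ one must in addition prove $K_S^2<K_W^2$. The device used throughout is that, since each $C_i\cong\mathbb P^1$ and $C$ is connected, $\pi(C)=\bigcup_i\pi(C_i)$ is a connected reduced curve \emph{all of whose irreducible components are rational}; by Proposition \ref{hahah} it is one-dimensional with dual graph not a tree, and $K_S\cdot\pi(C)\ge 1$ as soon as $\kappa(S)\ge 1$.

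First I would eliminate the non-rational surfaces with $\kappa(S)\le 0$ other than K3 and Enriques. Because every component of $\pi(C)$ is rational, the Albanese morphism $\mathrm{alb}_S$ contracts $\pi(C)$, and as $\pi(C)$ is connected it lies in a single fibre of $\mathrm{alb}_S$. If $S$ is abelian, $\mathrm{alb}_S$ is an isomorphism and $\pi(C)$ would be a point, contradicting that it is one-dimensional. If $S$ is ruled over a curve of genus $\ge 1$, that fibre is the ruling fibre $\mathbb P^1$, so $\pi(C)=\mathbb P^1$, a tree --- contradiction. If $S$ is bielliptic, that fibre is a smooth elliptic curve, which contains no rational curve, so $\pi(C)$ could have no one-dimensional component --- contradiction. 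This disposes of all minimal surfaces with $\kappa\le 0$ outside the list, so on that side $S$ is rational, K3, or Enriques.

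For $\kappa(S)=1$ I claim $q(S)=0$. Suppose $q(S)\ge 1$, and let $f\colon S\to B$ be the unique elliptic fibration; writing $K_S\equiv cF$ with $c>0$ and $F$ the fibre class, the bound $K_S\cdot\pi(C)\ge 1$ gives $F\cdot\pi(C)>0$, so some (rational) component of $\pi(C)$ dominates $B$, whence $B\cong\mathbb P^1$. By relative duality for elliptic surfaces $R^1f_*\mathcal O_S=(f_*\omega_{S/B})^\vee$ is a line bundle of degree $-\chi(\mathcal O_S)$, so Leray gives $q(S)=h^0\!\big(\mathbb P^1,\mathcal O_{\mathbb P^1}(-\chi(\mathcal O_S))\big)$; since $\chi(\mathcal O_S)\ge 0$ this forces $\chi(\mathcal O_S)=0$, $q(S)=1$, $p_g(S)=0$, and then $\chi_{\mathrm{top}}(S)=12\chi(\mathcal O_S)-K_S^2=0$. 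Now $\mathrm{alb}_S\colon S\to E_0$ is a fibration onto an elliptic curve whose general fibre $G$ has genus $\ge 2$ (genus $0$ would make $S$ ruled; genus $1$ would give a second elliptic fibration, impossible as it is unique when $\kappa=1$), and $\chi_{\mathrm{top}}(S)=0$ forces all fibres of $\mathrm{alb}_S$ to be smooth, so $S$ is an isotrivial fibre bundle over $E_0$ and hence an \'etale quotient of $G\times\widetilde{E_0}$ with $\widetilde{E_0}$ elliptic; but then $S$ contains no rational curve at all, contradicting that $\pi(C)$ has a one-dimensional component. So $q(S)=0$. The real work is concentrated in this case, and within it in the exclusion of $\chi(\mathcal O_S)=0$: one has to recognize that such an $S$ is forced to be essentially a product of two curves and therefore rational-curve-free, which is exactly what clashes with the existence of $\pi(C)$.

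Finally, if $S$ is of general type (the only remaining case), I would prove $K_S^2<K_W^2$ as follows. If $\pi$ is an isomorphism then $S=X$ and $K_S^2=K_X^2=K_W^2-(r-d+1)<K_W^2$, using $K_S^2-m+r-d+1=K_W^2$ and $r\ge d$. Otherwise decompose $\phi^*K_W=\pi^*K_S+F+\Delta$, where $\Delta=\sum_i\delta_iC_i\ge 0$ is the ($\phi$-exceptional) discrepancy divisor and $F\ge 0$, with all coefficients $\ge 1$, is the $\pi$-exceptional divisor determined by $K_X=\pi^*K_S+F$. Then $\pi^*K_S$ is nef (as $K_S$ is), $\phi^*K_W\cdot\Delta=0$ by the projection formula, and $\phi^*K_W\cdot F>0$ because $F$ contains a $(-1)$-curve of $X$ which $\phi$ does not contract (it is not among the $C_i$) and $K_W$ is ample. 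Hence $K_W^2=\phi^*K_W\cdot\pi^*K_S+\phi^*K_W\cdot F>\phi^*K_W\cdot\pi^*K_S=K_S^2+K_S\cdot\pi_*\Delta\ge K_S^2$.
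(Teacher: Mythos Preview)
Your proof is correct and takes a genuinely different route from the paper's. The paper argues uniformly via fibrations and the ampleness of $K_W$: whenever $S$ carries a fibration $S\to D$ with $K_S\cdot(\text{general fibre})\le 0$ (the ruling, the bielliptic elliptic fibration, or the Iitaka fibration for $\kappa=1$), either some $C_i$ dominates $D$, forcing $D\cong\P^1$, or $\pi(C)$ lies in a single fibre and then a general fibre, being disjoint from all exceptional loci, has nonpositive $K_W$-degree in $W$, contradicting ampleness. For $\kappa=1$ the paper then simply cites \cite{BHPV04} for $q(S)=g(D)=0$, and for $\kappa=2$ it defers $K_S^2<K_W^2$ to Corollary~\ref{weekineq}, which is proved later from the bound $E_i\cdot C\ge 1$. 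Your Albanese approach is more structural: it disposes of all the irregular $\kappa\le 0$ cases in one stroke via the non-tree conclusion of Proposition~\ref{hahah}, makes the exclusion of the $\kappa=1$, $q=1$ case explicit rather than a citation, and your decomposition $\phi^*K_W=\pi^*K_S+F+\Delta$ gives a self-contained proof of $K_S^2<K_W^2$ that avoids the forward reference to Corollary~\ref{weekineq}.

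One small point in your $\kappa=1$ argument: the implication ``all fibres of $\mathrm{alb}_S$ smooth $\Rightarrow$ $S$ is an isotrivial fibre bundle'' is not automatic---it is an Arakelov-type rigidity statement (equivalently, $\deg(\mathrm{alb}_S)_*\omega_{S/E_0}=\chi(\O_S)=0$ forces isotriviality)---and in any case is unnecessary. Once you know all Albanese fibres are smooth curves of genus $\ge 2$, any rational curve in $S$ is contracted by $\mathrm{alb}_S$ (the target being elliptic) and hence lies inside such a fibre, which already gives the contradiction without passing through isotriviality or the product description.
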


\begin{proof}
This is essentially classification of surfaces. Say that $S$ is ruled. Then there is a $\P^1$-fibration $S \to D$ for some curve $D$. If some $C_i$ is a multiple section, then $D=\P^1$, and $S$ is rational. If no $C_i$ is a multiple section, then $C$ maps to one fiber. But then the general fiber $G$ has $G \cdot K_S=-2$, and so $G' \cdot K_W=-2$ for the strict transform $G'$ of $G$ in $W$. But $K_W$ is ample, a contradiction.

Say $S$ has $\kappa(S)=0$. If $S$ is bi-elliptic, then there is an elliptic fibration $S \to D$ over an elliptic curve $D$. But then the argument above leads to a contradiction. If $S$ is an abelian surface, then $\pi(C)=\emptyset$, but this is not possible by the previous proposition. So, by the classification of surfaces, the surface $S$ can be only K3 or Enriques.

Say $S$ has $\kappa(S)=1$. Then it has an elliptic fibration $S \to D$. But the $C_i$'s cannot be all in a fiber, because of ampleness of $K_W$ as above, and so $g(D)=0$. But then $q(S)=g(D)=0$, since $S$ is not a product (see \cite[V(12.2),III(18.2-3)]{BHPV04}).

Finally if $S$ is of general type, then Corollary \ref{weekineq} shows $K_W^2 > K_S^2$.
\end{proof}

\begin{lemma}
We have $\Big(\sum_{i=1}^m E_i \Big) \cdot \Big(\sum_{j=1}^r C_j \Big) = r - d + 2 - K_S \cdot \pi(C).$
\label{int}
\end{lemma}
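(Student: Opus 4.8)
The plan is to compute both sides by tracking how the quantity $\big(\sum_i E_i\big)\cdot\big(\sum_j C_j\big)$ changes under the sequence of blow-downs realized by $\pi$, starting from $X$ and ending at $S$. On $S$ there are no $E_i$'s, so the left-hand side is entirely a bookkeeping device for the $m$ blow-ups; the idea is to show that each individual blow-up changes the left-hand side and the quantity $r-d+2-K_S\cdot\pi(C)$ in exactly the same way, and then check a base case.

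First I would set up the induction on $m$. Consider the last blow-up $\pi_m\colon X\to X'$, contracting a $(-1)$-curve; call its total transform $E_m$ (which on $X$ is that $(-1)$-curve, with multiplicity one in the reduced sense, though the $E_i$ for $i<m$ may acquire higher multiplicity along it). The divisor $E_m$ meets $\sum_j C_j$ in some number of points; crucially, by Proposition \ref{hahah} and the fact that $T$-singularities are log terminal, the configuration $C$ together with the image curves forces $E_m$ to meet $C$ in exactly the ``right'' number of points — this is where the combinatorics of the string $[b_1,\dots,b_r]$ and the no-$(-1)$-curve condition on $S$ enter. Concretely, I expect that $\big(\sum_i E_i\big)\cdot\big(\sum_j C_j\big)$ on $X$ equals the analogous quantity on $X'$ plus a correction, and similarly $r$ (the number of components of $C$), $d$, and $K_S\cdot\pi(C) = K_{X'}\cdot\pi_{\le m-1}(C)$ transform compatibly.

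An alternative, cleaner route — which I would actually prefer to present — is direct intersection theory on $X$ itself without induction. Write $K_X = \pi^*K_S + \sum_{i=1}^m E_i$ (standard formula for a composition of blow-ups, using that $E_i$ here denotes the total transform of the $i$-th exceptional divisor). Then $\big(\sum_i E_i\big)\cdot C = (K_X - \pi^*K_S)\cdot C = K_X\cdot C - K_S\cdot\pi(C)$, using the projection formula $\pi^*K_S\cdot C = K_S\cdot\pi_*C = K_S\cdot\pi(C)$. So it remains to prove $K_X\cdot C = r-d+2$. Now $C=\sum_j C_j$ is the exceptional divisor of the minimal resolution $\phi$ of the $T$-singularity $\frac{1}{dn^2}(1,dna-1)$, and $K_X\cdot C = \sum_j K_X\cdot C_j = \sum_j(C_j^2 + 2 + (\text{genus term}))$... more precisely by adjunction on each smooth rational $C_j$, $K_X\cdot C_j = -2 - C_j^2$. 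Summing, $K_X\cdot C = -2r - \sum_j C_j^2$. For an H-J string $[b_1,\dots,b_r]$ we have $\sum_j C_j^2 = -\sum_j b_j$, so $K_X\cdot C = -2r + \sum_j b_j$. Hence the claim reduces to the identity $\sum_{j=1}^r b_j = 3r - d + 2$ for the $T$-chain $[b_1,\dots,b_r]$ of the singularity $\frac{1}{dn^2}(1,dna-1)$, which follows from the explicit description of such chains in \cite[Proposition 3.11]{KSB88} (the $T$-chains are built inductively from $[4]$ and $[3,\dots,3,2]$-type blocks by the two operations $[b_1,\dots,b_r]\mapsto[2,b_1,\dots,b_r+1]$ and $[b_1,\dots,b_r]\mapsto[b_1+1,\dots,b_r,2]$, each of which increases $\sum b_j$ by $3$, increases $r$ by $1$, and leaves $d$ unchanged, with base cases checked by hand).

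The main obstacle is pinning down the combinatorial identity $\sum b_j = 3r-d+2$ cleanly from the structure of $T$-chains — one must be careful that the statement is for the actual minimal resolution of the given $T$-singularity (so the $C_j$ really are smooth rational with the stated self-intersections, and $C$ is a chain, not just a tree), and that the parameter $d$ appearing in the lemma matches the $d$ in the inductive construction of the chain; once that bookkeeping is set up, the rest is the projection-formula computation above, which is routine.
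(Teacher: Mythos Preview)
Your preferred ``direct intersection theory'' route is exactly the paper's proof: write $\sum_i E_i = K_X - \pi^*K_S$, apply the projection formula to get $(\sum_i E_i)\cdot C = K_X\cdot C - K_S\cdot\pi(C)$, and invoke the identity $K_X\cdot C = r-d+2$ (which the paper cites from \cite[Lemma 2.3]{R14} and which you correctly derive from $\sum_j b_j = 3r-d+2$ for a $T$-chain). Your initial inductive sketch is unnecessary and its appeal to Proposition~\ref{hahah} and log-terminality is misplaced---this lemma is a pure intersection-number computation with no need for ampleness or positivity---but since you abandon it in favor of the correct argument, the proposal is fine.
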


\begin{proof}
This is a direct computation, using that $\sum_{i=1}^m E_i = K_X - \pi^*(K_S)$ and $K_X \cdot (\sum_{j=1}^r C_j) = r-d+2$; see \cite[Lemma 2.3]{R14}.
\end{proof}

\begin{lemma}
For any $i$, we have $ E_i \cdot \Big(\sum_{j=1}^r C_j \Big) \geq 1.$
\label{weekbound}
\end{lemma}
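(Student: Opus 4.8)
The plan is to show that each fiber divisor $E_i$ must meet the exceptional locus $C = \sum_{j=1}^r C_j$, using the ampleness of $K_W$ together with the structure of $\pi$ as a sequence of blow-ups. First I would set up the induction on the blow-up sequence: write $\pi = \pi_1 \circ \cdots \circ \pi_m$ where $\pi_k$ blows up the $k$-th point, and observe that $E_i$ is the total transform of the exceptional curve of $\pi_i$. The key qualitative fact, already exploited in Proposition \ref{hahah}, is that if any irreducible curve $F$ on $X$ satisfies $F \cdot C \leq 0$ (equivalently, $F$ disjoint from $C$ or tangent in a controlled way) then its image $\phi(F)$ in $W$ would have $\phi(F) \cdot K_W < 0$ because T-singularities are log terminal and $\phi^* K_W = K_X + (\text{discrepancy divisor supported on } C)$; this contradicts ampleness of $K_W$.

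The main step is then: suppose for contradiction that $E_i \cdot C = 0$ for some $i$. Since $E_i$ is a nonempty effective divisor (a tree of $\P^1$'s, possibly non-reduced), it is connected and supported on the fibers of $\pi$. I would consider the last $(-1)$-curve in the tree $E_i$ — more precisely, since $S$ contains no $(-1)$-curves (by construction of $\pi$), every component of every $E_i$ is eventually contracted, and among the components of $E_i$ there is at least one $(-1)$-curve $F$ on $X$ (the strict transform of the exceptional curve of the last blow-up factoring through a point on $E_i$). Then $F \cdot C = 0$ as well, since $F$ is a component of $E_i$ and $E_i \cdot C = 0$ forces each component to be disjoint from $C$ (here one uses that $C$ is effective and reduced, so $E_i \cdot C = 0$ with $E_i$ effective means every component of $E_i$ is disjoint from $C$). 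Now $F$ is a $(-1)$-curve on $X$ disjoint from $C$, so $\phi(F)$ is a $(-1)$-curve on $W$ — but $\phi(F) \cdot K_W = F \cdot \phi^* K_W = F \cdot (K_X + \Delta_C) = F \cdot K_X = -1 < 0$, contradicting ampleness of $K_W$. This gives $E_i \cdot C \geq 1$, and since both are integral classes the bound follows.

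I expect the main obstacle to be the careful bookkeeping showing that $E_i \cdot C = 0$ genuinely produces a $(-1)$-curve \emph{disjoint} from $C$, rather than merely one meeting $C$ at a single point (which would require the slightly stronger log-terminal estimate $F \cdot K_W \leq -1 + (\text{coefficient}) < 0$). In fact the argument is cleanest if one simply notes: whenever $F \cdot C \leq 1$ for a $(-1)$-curve $F$ on $X$, its image in $W$ passes through the singular point with multiplicity at most one along $C$, and since $K_W$ is ample while $\phi^* K_W \cdot F = -1 + (F \cdot \Delta_C)$ with $0 \le F \cdot \Delta_C < F \cdot C \le 1$ (strict because discrepancies of a log terminal singularity are $> -1$), we get $\phi^* K_W \cdot F < 0$, a contradiction. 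So the real content is: if $E_i \cdot C = 0$ then running $\pi$ backwards produces a $(-1)$-curve $F$ with $F \cdot C \le E_i \cdot C = 0 \le 1$, and we are done. This is exactly the mechanism used in the proof of Proposition \ref{hahah}, so I would phrase the lemma's proof as a direct corollary of that reasoning, or cite \cite[Lemma 2.3]{R14} if the identical statement appears there.
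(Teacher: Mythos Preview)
Your argument has a genuine gap: you implicitly assume that $E_i$ and $C$ share no irreducible components. The claim ``$E_i \cdot C = 0$ with $E_i$ effective means every component of $E_i$ is disjoint from $C$'' is only valid when the supports of $E_i$ and $C$ are disjoint, but this is precisely what fails here. Some of the curves $C_j$ may be contracted by $\pi$ and hence appear as components of $E_i$; when $C_j \subset E_i$ one has $C_j \cdot E_i \in \{0,-1\}$, so $E_i \cdot C$ can have negative summands. Your contradiction argument (finding a $(-1)$-curve $F$ disjoint from $C$) therefore does not follow from $E_i \cdot C \le 0$, and the inequality $F \cdot C \le E_i \cdot C$ in your final paragraph is likewise unjustified.

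The paper's proof addresses exactly this point. It first records (citing \cite[Lemma 2.4]{R14}) that the total negative contribution from the $C_j$ contained in $E_i$ is at most $-1$, since $C_j \cdot E_i = -1$ can occur for at most one such $C_j$. It then argues separately that the intersection of $E_i$ with the $C_j$ \emph{not} contained in $E_i$ is at least $2$: there is a $(-1)$-curve $F$ inside $E_i$, and ampleness of $K_W$ together with log-terminality forces $F \cdot C \ge 2$; combined with $\pi(C) \neq \emptyset$ (Proposition~\ref{hahah}) and the fact that $E_i$ is a tree, this yields the required positive contribution. Summing gives $E_i \cdot C \ge 2 - 1 = 1$. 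To repair your argument you would need to reproduce this splitting into $C_j \subset E_i$ and $C_j \nsubseteq E_i$ and control each piece.
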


\begin{proof}
If $C_j \subset E_i$, then $C_j \cdot E_i = 0$ or $C_j \cdot E_i = -1$. The latter case can happen only for one $C_j$. On the other hand, we must have a $(-1)$-curve $F$ in $E_i$. Since $K_W$ is ample and the singularity in $W$ is log terminal, we must have $F \cdot \Big(\sum_{j=1}^r C_j \Big) \geq 2$. On the other hand, by Proposition \ref{hahah}, we know that $\pi(C)$ is not empty, and we have that $E_i$ is a tree. Therefore $E_i$ intersected with $\sum_{C_j \nsubseteq E_i} C_j$ is at least $2$. Therefore $ E_i \cdot \Big(\sum_{j=1}^r C_j \Big) \geq 1$.
\end{proof}

\begin{corollary}
We have $\Big(\sum_{i=1}^m E_i \Big) \cdot \Big(\sum_{j=1}^r C_j \Big) \geq m+1$. In particular $K_W^2 - K_S^2 \geq K_S \cdot \pi(C),$ and so we obtain $K_W^2 > K_S^2$ when $K_S$ is nef.
\label{weekineq}
\end{corollary}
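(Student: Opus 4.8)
The plan is to combine Lemma \ref{int} with Lemma \ref{weekbound} by summing the latter over all $m$ blow-ups. From Lemma \ref{weekbound} we have $E_i \cdot \big(\sum_{j=1}^r C_j\big) \geq 1$ for each $i = 1, \ldots, m$; adding these $m$ inequalities gives immediately
\[
\Big(\sum_{i=1}^m E_i\Big) \cdot \Big(\sum_{j=1}^r C_j\Big) \geq m.
\]
The asserted bound is $m+1$, so I need one more unit of intersection. I would recover it by going back into the proof of Lemma \ref{weekbound}: among the $m$ exceptional trees $E_i$, consider the last one blown up (or more carefully, a maximal one among the $E_i$ with respect to the partial order given by the blow-up sequence). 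For the very last point blown up, $E_i = E_m$ is a single $(-1)$-curve $F$, which by the argument in Lemma \ref{weekbound} (ampleness of $K_W$ plus log terminality) satisfies $F \cdot \big(\sum_j C_j\big) \geq 2$, not merely $\geq 1$. So one of the $m$ summands is at least $2$ while the rest are at least $1$, giving the total $\geq m+1$.

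Next, I would substitute into Lemma \ref{int}: since $\big(\sum E_i\big)\cdot\big(\sum C_j\big) = r - d + 2 - K_S \cdot \pi(C)$, the inequality $\geq m+1$ becomes $r - d + 2 - K_S \cdot \pi(C) \geq m+1$, i.e. $r - d + 1 - m \geq K_S \cdot \pi(C)$. Then I invoke the relation $K_S^2 - m + r - d + 1 = K_W^2$ from the discussion preceding Proposition \ref{hahah}, which rearranges to $r - d + 1 - m = K_W^2 - K_S^2$. Combining the two gives $K_W^2 - K_S^2 \geq K_S \cdot \pi(C)$, as claimed.

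Finally, for the last assertion: if $K_S$ is nef, then $K_S \cdot \pi(C) \geq 0$ because $\pi(C)$ is an effective divisor (it is nonempty by Proposition \ref{hahah}), so $K_W^2 - K_S^2 \geq 0$. To upgrade this to the strict inequality $K_W^2 > K_S^2$, I would argue that $K_S \cdot \pi(C) = 0$ forces a contradiction: Proposition \ref{hahah} already records that $\kappa(S) = 1, 2$ forces $K_S \cdot \pi(C) \geq 1$, so the only remaining nef case is $\kappa(S) = 0$, where $K_S \equiv 0$ and $K_S \cdot \pi(C) = 0$ automatically — but then equality $m+1 = r-d+2-K_S\cdot\pi(C)$ would need to be examined more carefully, or one can simply observe that $m \geq 1$ (there is at least one blow-up, since $S$ has no $(-1)$-curves while $X \to W$ is the minimal resolution and $\pi(C)$ is not a tree, so $\pi$ is nontrivial), hence $r - d + 1 = K_W^2 - K_S^2 + m \geq K_W^2 + 1 > K_W^2 \geq K_S^2$ — wait, that is circular; instead I would use $K_W^2 - K_S^2 = r-d+1-m \geq K_S\cdot\pi(C) + 1 - 1$... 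The cleanest route: when $K_S$ is nef the bound $\geq m+1$ already gives $K_W^2 - K_S^2 = r-d+1-m \geq K_S \cdot \pi(C) \geq 0$, and if this is $0$ then $m = r-d+1$ and every $E_i \cdot \sum C_j$ must equal its minimum; tracing through Lemma \ref{weekbound} in the equality case, the last $(-1)$-curve still contributes $2$, contradicting $\big(\sum E_i\big)\cdot\big(\sum C_j\big) = m$. The main obstacle is precisely this bookkeeping — pinning down that exactly one extra unit of intersection (and no more than needed) comes from the terminal $(-1)$-curve, and handling the $\kappa(S)=0$ boundary case — rather than any deep geometry.
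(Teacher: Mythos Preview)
Your approach is exactly the paper's: sum Lemma \ref{weekbound} over all $i$, use that $E_m$ is a single $(-1)$-curve so contributes at least $2$, then plug into Lemma \ref{int} and the relation $r-d+1-m = K_W^2 - K_S^2$. That part is fine and matches the paper verbatim.

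The only problem is your last paragraph. Your attempted contradiction in the $\kappa(S)=0$ case does not go through: if $K_W^2 - K_S^2 = 0$ and $K_S \cdot \pi(C) = 0$, then Lemma \ref{int} gives $\big(\sum_i E_i\big)\cdot\big(\sum_j C_j\big) = r-d+2 = m+1$, which is perfectly consistent with your lower bound $\geq m+1$; there is no contradiction with ``$=m$'' because the sum is not $m$. The paper avoids this detour entirely. For $\kappa(S)=1,2$ it invokes Proposition \ref{hahah} to get $K_S\cdot\pi(C)\geq 1$ and hence $K_W^2 - K_S^2 \geq 1$; for $\kappa(S)=0$ (the only other nef case, by Proposition \ref{type}) one has $K_S \equiv 0$ numerically, so $K_S^2 = 0$, while $K_W$ ample forces $K_W^2 > 0$. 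That is the one-line argument you were missing.
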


\begin{proof}
This is Lemma \ref{weekbound} together with the observation that $E_m$ is a $(-1)$-curve, and so $E_m \cdot \Big(\sum_{j=1}^r C_j \Big) \geq 2$. For the rest, we use Lemma \ref{int}, $r-d+1-m=K_W^2-K_S^2$, and Proposition \ref{hahah}.
\end{proof}

The key for us will be to find a better lower bound for $$\Big(\sum_{i=1}^m E_i \Big) \cdot \Big(\sum_{j=1}^r C_j \Big).$$ For each $E_i$, we define the diagram $\Gamma_{E_i}$ as in \cite{R14}. The dual graph of the T-chain $C_1, \ldots, C_r$ is shown below in Figure \ref{C}.
\begin{figure}[htbp]
\includegraphics[width=3cm]{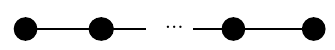}
\caption{The dual graph of $C$.}
\label{C}
\end{figure}

If $C_j\subset E_i$, we replace the $j^{\textrm{th}}$ vertex of the dual graph by a box, and denote the resulting graph by $\Gamma_{E_i}$. For instance, if  $\Gamma_{E_i}$ is as in Figure \ref{kkjkjfd}, then there are at least $4$ points of intersection among curves in the T-chain not in $E_i$ and curves in $E_i$. 

\begin{figure}[H]
\includegraphics[width=3cm]{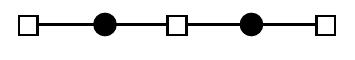}
\caption{A particular example.}
\label{kkjkjfd}
\end{figure}

If there are two or fewer points of intersection, then $\Gamma_{E_i}$ must have the form shown in Figure \ref{f3}, Figure \ref{f4}, or Figure \ref{f5}.

\begin{figure}[H]
\includegraphics[width=6.5cm]{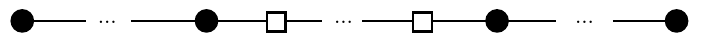}
\caption{Case (1).}
\label{f3}
\end{figure} \begin{figure}[H]
\includegraphics[width=6.5cm]{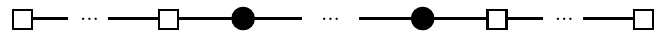}
\caption{Case (2).}
\label{f4}
\end{figure} \begin{figure}[H]
\includegraphics[width=4.5cm]{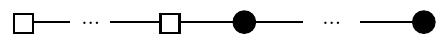}
\caption{Case (3).}
\label{f5}
\end{figure}

\begin{definition}
We say that $E_i$ has a \textbf{long diagram} if $\Gamma_{E_i}$ is as in Figure \ref{long}, and there is a $(-1)$-curve $F$ as shown in that figure (there are two types).
\label{longdiagram}
\end{definition}

\begin{figure}[htbp]
\includegraphics[width=10cm]{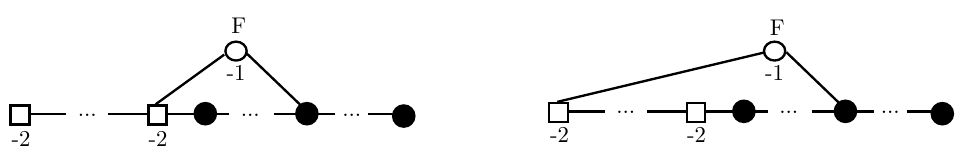}
\caption{Long diagrams of type I (left) and type II (right).}
\label{long}
\end{figure}

\begin{lemma}
An $E_i$ with $E_i \cdot \Big(\sum_{j=1}^r C_j \Big)=1$ has a long diagram.
\label{main1}
\end{lemma}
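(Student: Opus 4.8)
The statement to prove is Lemma \ref{main1}: if $E_i \cdot \bigl(\sum_j C_j\bigr) = 1$, then $E_i$ has a long diagram in the sense of Definition \ref{longdiagram}. The natural strategy is a careful case analysis of the diagram $\Gamma_{E_i}$ under the hypothesis that the total intersection number is exactly $1$, pushing the ideas already used in the proof of Lemma \ref{weekbound}. First I would recall the dichotomy from that proof: writing $E_i \cdot \bigl(\sum_j C_j\bigr) = \bigl(E_i \cdot \sum_{C_j \subset E_i} C_j\bigr) + \bigl(E_i \cdot \sum_{C_j \nsubseteq E_i} C_j\bigr)$, the first summand is $0$ or $-1$ (and $-1$ for at most one $C_j$), while the second summand (the ``number of points of intersection'' counted in $\Gamma_{E_i}$) is the quantity controlled by the figures. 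So $E_i \cdot \bigl(\sum_j C_j\bigr) = 1$ forces the second summand to be exactly $1$ or exactly $2$, and in the latter case one of the boxed vertices must carry a $(-1)$-self-intersection $C_j$ inside $E_i$ contributing $-1$.

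**Key steps.** (1) Invoke the trichotomy already set up in the excerpt: if there are two or fewer intersection points in $\Gamma_{E_i}$, then $\Gamma_{E_i}$ has the shape of Figure \ref{f3}, Figure \ref{f4}, or Figure \ref{f5} (Cases (1)--(3)). (2) In each of these three configurations, run the argument from Lemma \ref{weekbound}: there must be a $(-1)$-curve $F \subset E_i$, and since $K_W$ is ample and the singularity log terminal, $F \cdot \bigl(\sum_j C_j\bigr) \geq 2$; moreover $E_i$ is a tree (Proposition \ref{hahah} guarantees $\pi(C) \neq \emptyset$), so $E_i$ meets $\sum_{C_j \nsubseteq E_i} C_j$ in at least $2$ points. (3) Combine these local constraints with the global bound $E_i \cdot \bigl(\sum_j C_j\bigr) = 1$ to eliminate the configurations that cannot produce a $(-1)$-curve $F$ with $F \cdot C \geq 2$ sitting correctly — this should rule out most of Cases (1)--(3) and the sub-positions of the box within them, leaving exactly the configuration of Figure \ref{long} together with the required $(-1)$-curve, and distinguishing type I from type II by how $F$ sits (whether $F$ contributes one of the two intersection points with $C$ itself or whether the $-1$ contribution comes from a boxed $C_j$). (4) Conversely note that if $\Gamma_{E_i}$ has three or more intersection points then $E_i \cdot \bigl(\sum_j C_j\bigr) \geq 3 - 1 = 2 > 1$, so that case does not arise; this closes the analysis.

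**The main obstacle.** The delicate part is step (3): tracking precisely where the unique box (or boxes) can sit in each of Figures \ref{f3}--\ref{f5}, and which of those positions is compatible simultaneously with (a) the tree structure of $E_i$, (b) the existence of a $(-1)$-curve $F$ in $E_i$ with $F \cdot \bigl(\sum_j C_j\bigr) \geq 2$, and (c) the special form of the T-chain continued fraction $[b_1,\dots,b_r]$ from \cite[Proposition 3.11]{KSB88}, which constrains which $C_j$ can have self-intersection $-1$ after contracting and hence where $F$ can be attached. One has to be careful that the $(-1)$-curve realizing the constraint is genuinely \emph{in} $E_i$ and is the one whose two intersection points with $C$ account for the discrepancy between the raw count $2$ and the net value $1$; the bookkeeping of ``$-1$ from an internal $C_j$ versus an extra external intersection from $F$'' is exactly what separates the two types of long diagram. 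I would handle this by drawing each sub-case explicitly and matching it against Figure \ref{long}, which is essentially the content the figures in the paper are designed to make transparent.
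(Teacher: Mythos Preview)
Your outline matches the paper's proof exactly in structure: reduce to the three possible shapes of $\Gamma_{E_i}$ with at most two box--bullet adjacencies, and in each case exploit the tree structure of $E_i$ together with the fact that any $(-1)$-curve $F\subset E_i$ must satisfy $F\cdot C\ge 2$. But your proposal stops precisely where the content is. Case~(1) is indeed immediate, and Case~(3) is where the long diagram emerges; however, Case~(2) (boxes in the interior, bullets on both sides) is not merely ``ruled out'' by the bookkeeping you describe---in the paper it takes almost two pages to show it is \emph{impossible}. The argument there requires several ingredients you do not mention: one first shows that of the two boxed curves $A,B$ meeting $F$, one (say $B$) must be a $(-2)$-curve; then that $B$ cannot have two boxed neighbors (else contracting $F,B$ produces a triple point, contradicting that every $E_j$ is SNC); then that $B$ sits at an end of $C$ (else its multiplicity in $E_i$ is $\ge 2$, forcing $E_i\cdot C\ge 2$); and finally a further split into subcases on the neighbors of $A$, each eliminated by multiplicity or SNC arguments together with two T-chain facts absent from your sketch: a T-chain cannot have $(-2)$-curves at both ends, and a $(-1)$-curve meeting both ends of $C$ has $\phi(F)\cdot K_W=0$ (Remark~\ref{ends}). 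Without these, the elimination of Case~(2) does not go through.

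Two smaller corrections. In Case~(3) the point is not just to locate $F$ but to prove that \emph{every} boxed curve is a $(-2)$-curve (else a second $(-1)$-curve is needed in $E_i$, creating a loop or a third intersection with $C$); this is what forces the shape in Figure~\ref{long}. And your description of what separates type~I from type~II is not right: both types arise in Case~(3), with $F$ meeting one bullet and one box in each; the distinction is which box $F$ meets---the one adjacent to the bullets (type~I) or the one at the far end $C_1$ (type~II)---not whether the ``$-1$'' comes from an internal $C_j$ versus an external hit of $F$.
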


\begin{proof}
As shown above, there are three cases according to curves in $E_i$ shared by $C$.

\underline{Case (1)} is impossible because $K_W$ is ample. More precisely, this implies that a $(-1)$-curve $F$ in $E_i$ (a ``final" one) must intersect $C$ twice, and this would give either a third point of intersection with the rest of $C$ or a loop with $E_i$. But $E_i$ is a tree of $\P^1$'s.

Notice that in here we did not use the fact that $C$ is a T-chain.

\begin{figure}[htbp]
\includegraphics[width=6cm]{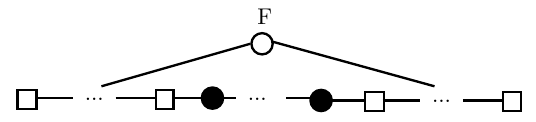}
\caption{Case (2), and $(-1)$-curve $F$.}
\label{case2}
\end{figure}

\underline{Case (2)}. In this case there is a $(-1)$-curve $F$ as in Figure \ref{case2}.

Notice that $F$ intersects one $\Box$ curve $A$ on the left and one $\Box$ curve $B$ on the right, in both cases transversally, and intersects no other curve in $E_i$. We note that either $A^2=-2$ or $B^2=-2$. Otherwise, we would need another $(-1)$-curve in $E_i$. This $(-1)$-curve would give a either a loop in $E_i$ or a third point of intersection with $C$.

Let us say $B^2=-2$. Notice that then the curve $B$ cannot have two $\Box$ neighbors, since if it did, then contracting $F$ and $B$ would give a triple point, and the $E_j$ are all simple normal crossings trees for all $j$. So we have the situation of Figure \ref{case2a}.

\begin{figure}[htbp]
\includegraphics[width=6cm]{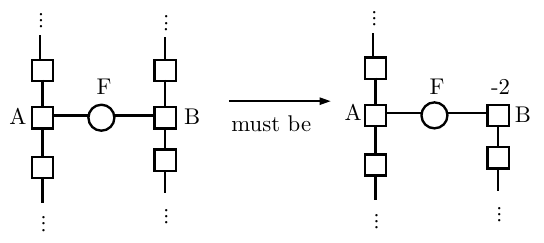}
\caption{Situation when $B^2=-2$.}
\label{case2a}
\end{figure}

Note that the curve $B$  would have multiplicity at least $2$ in $E_i$ if it had a $\bullet$ neighbor. Thus $B$ must be at end of $C$, since otherwise $E_i$ would have triple intersection with $C$. So our situation is as in Figure \ref{case2d}, for some $l\geq 0$.
We claim that in this case $A$ can have only one $\Box$ neighbor.

\begin{figure}[htbp]
\includegraphics[width=9cm]{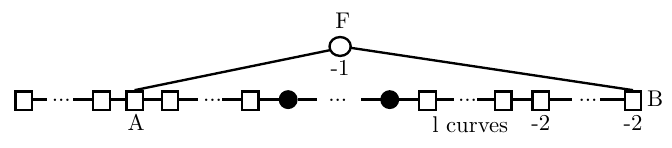}
\caption{Case 2, and $(-1)$-curve $F$.}
\label{case2d}
\end{figure}

\begin{proof}
Say that $A$ has two $\Box$ neighbors.

Suppose that after blowing down $F$, $B$, $\ldots$, $D$, as in Figure~\ref{case2d}, we have that $A$ becomes a $(-1)$-curve. If $l=0$, then $D$ has multiplicity at least $2$ in $E_i$, so this cannot happen because the intersection of $E_i$ with $C$ would be bigger than or equal to $2$. If $l>0$, then contracting the chain $F$, $B$, $\ldots$, $D$, $A$ gives a non simple normal crossing situation for $E_i$, which cannot happen.

On the other hand, suppose $A$ does not become a $(-1)$-curve after blowing down $F$, $B$, $\ldots$, $D$. Then there exists another $(-1)$-curve to continue contracting $E_i$. If this $(-1)$-curve is disjoint from the curves $F$, $B$, $\ldots$, $D$, then it is a $(-1)$-curve from the beginning in $E_i$, and so it intersects the black dots (otherwise we would generate a loop in $E_i$), a contradiction. Thus, it is not disjoint from these curves, and since $E_i$ must remain simple normal crossings at the blow downs, then $l$ must be zero. Since $l=0$, then $D$ must have multiplicity at least $2$ in $E_i$, again a contradiction.

Therefore $A$ must have only one $\Box$ neighbor, proving the claim.
\end{proof}

Notice also that $A$ cannot be at the left end of $C$, since that would give $\phi(F)\cdot K_W=0$ because $C$ is a T-chain (see Remark \ref{ends}).

\begin{remark}
Assume that the $(-1)$-curve $F$ intersects the ends of the $T$-chain $C$. Then the image of $F$ in $W$ has
$$\phi(F) \cdot K_W=-1 + 1 - \frac{dna-1 + 1}{dn^2} + 1 - \frac{dn(n-a)-1 + 1}{dn^2}=0,$$ since the discrepancies of the ends of $C$ are $-1 + \frac{dna-1 + 1}{dn^2}$ and $-1 + \frac{dn(n-a)-1 + 1}{dn^2}$. All discrepancies of $C$ can be computed as in \cite[Sect.4]{Urz13a}.
\label{ends}
\end{remark}

Therefore $A$ has a $\Box$ neighbor, and a $\bullet$ neighbor. We have two situations.

\textbf{(a)} We have $l=0$. The situation is as in Figure \ref{case2b}.

\begin{figure}[htbp]
\includegraphics[width=5cm]{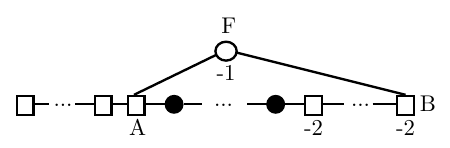}
\caption{Situation when $l=0$.}
\label{case2b}
\end{figure}

If after blowing down all $\Box$ $(-2)$-curves $B, \ldots, D$ the curve $A$ does not become a $(-1)$-curve, then we have an extra $(-2)$-curve as in Figure \ref{case2c}. This is because we need another $(-1)$-curve to continue blowing down $E_i$, and the only possibility is to come from such a situation. But then, the multiplicity in $E_i$ of the $(-2)$-curve $D$ is at least $2$, so this is not possible.

\begin{figure}[htbp]
\includegraphics[width=5cm]{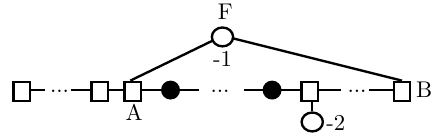}
\caption{When the $\Box$ adjacent to $A$ is not a $(-2)$-curve.}
\label{case2c}
\end{figure}

Therefore after blowing down all $(-2)$-curves $B, \ldots, D$, the curve $A$ becomes a $(-1)$-curve. If the $\Box$ adjacent to $A$ is not a $(-2)$-curve, then we need another $(-1)$-curve to continue contracting $E_i$. This means there is a $(-2)$-curve hanging as in Figure \ref{case2c}, but this is not possible as we discussed above. Therefore, the box adjacent to $A$ and all remaining $\Box$'s are at $(-2)$-curves. But $C$ is a T-configuration, and so cannot have $(-2)$-curves in both ends, a contradiction. (This is the only place in case 2 where we use that $C$ is a T-configuration.)

\textbf{(b)} We have $l>0$. If after blowing down the $(-2)$-curves $B, \ldots, D$, the curve $A$ becomes a $(-1)$-curve, then its multiplicity in $E_i$ is at least $2$. So $A$ cannot become a $(-1)$-curve. But then we need an extra $(-1)$-curve in $E_i$ to continue the contraction of $E_i$. If this $(-1)$-curve is disjoint from the curves $F$, $B, \ldots, D$, then it is a $(-1)$-curve from the beginning in $E_i$, and so it intersects the black dots (otherwise we would generate a loop in $E_i$), giving a third point of intersection of $E_i$ with $C$, a contradiction. Thus, it is not disjoint from these curves. But since $E_i$ must remain simple normal crossing at each blow-down, this forces $l=0$, again a contradiction.

Since we have proved that both situations (a) and (b) cannot occur, case (2) is impossible.

We remark that the fact that $C$ is a T-configuration was only used to eliminate the case where all $\Box$'s are $(-2)$-curves, and to eliminate the situation in which $F$ intersects both ends of $C$.
\bigskip

\underline{Case 3)}. We assume that there is $E_i$ with $$\Big(\sum_{j=1}^r C_j \Big) \cdot E_i=1.$$ In this case there must be a $(-1)$-curve $F$ that intersects a $\bullet$ curve at one point transversally, and a $\Box$ curve $A$ at one point transversally. There are no further intersections of $F$ with curves in $E_i$, because such an intersection would give a loop in $E_i$.

Notice first that $A^2=-2$. This is because if $A^2 \leq -3$, then we need another $(-1)$-curve to continue contracting $E_i$. This curve is disjoint from $F$, and so it gives from the beginning either a cycle in $E_i$ or a third point of intersection, neither of which is possible.

Also note that $A$ is adjacent to no more than one $\Box$ curve. On the contrary, suppose that $A$ is adjacent to two $\Box$ curves. Since $A^2=-2$, then $F$ has multiplicity at least $2$ in $E_i$, a contradiction with $\big(\sum_{j=1}^r C_j \big) \cdot E_i=1$.

Finally, notice that the same argument shows that all other $\Box$ curves in $C$ are also $(-2)$-curves. Otherwise, we would have either a third point of intersection of $E_i$ with $C$ or a cycle with an extra $(-1)$-curve in $E_i$. Thus we have that $E_i$ has a long diagram, as in Figure \ref{long}.
\end{proof}

\begin{lemma}
Assume that $E_i$ has a long diagram. Say that $C_1,C_2,\ldots,C_s$ are $(-2)$-curves and $C_{s+1}^2 \leq -3$. Then the number of $E_j$ with $$E_j \cdot \Big(\sum_{j=1}^r C_j \Big)=1$$ is precisely $1$ if $E_i$ is of type I, and $s$ if $E_i$ is of type II.
\label{main}
\end{lemma}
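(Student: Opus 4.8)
The plan is to reduce, using Lemma~\ref{main1}, to counting the blow-up trees $E_j$ that carry a long diagram, and then to extract that count from the combinatorics of the part of $\pi$ producing the $(-2)$-tail $C_1,\dots,C_s$ and the $(-1)$-curve attached to it. The first step is the converse of Lemma~\ref{main1}: every $E_j$ carrying a long diagram satisfies $E_j\cdot(C_1+\dots+C_r)=1$. This is a direct count from Figure~\ref{long}: the box curves of such an $E_j$ form a run of $(-2)$-curves inside $C_1,\dots,C_s$, exactly one of them meets $E_j$ in $-1$ and the rest in $0$, while the attaching $(-1)$-curve $F$ together with the ends of that run contribute exactly $2$ to the intersection of $E_j$ with the curves of $C$ not contained in it. Combined with Lemma~\ref{main1}, this identifies the quantity we want with the number of $E_j$ carrying a long diagram; moreover the same figure forces every long diagram to sit at the $C_1$-end of $C$, since $C_1,\dots,C_s$ is the only place where the required chain of $(-2)$-curves meeting the rest of the configuration can occur --- this is exactly where $C_{s+1}^2 \le -3$ enters.

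Next I would analyze the blow-up sequence. A long diagram for $E_i$ can be contracted completely by first blowing down $F$ and then blowing down the $(-1)$-curves that appear one after another along the tail; performing these contractions shows that the subsequence of $\pi$ responsible for $C_1,\dots,C_s$ and for $F$ is essentially rigid once the picture of Figure~\ref{long} is fixed. In the type~I picture the attaching curve is glued so that no intermediate stage of this contraction is itself the total transform of a single point meeting $C$ exactly once: a further $E_j$ with a long diagram would have to be created strictly before or strictly after $E_i$ in the blow-up order, and tracing the induced configuration produces in each case either a second $(-1)$-curve inside some $E_\ell$ --- hence a loop or a triple point in $E_\ell$, both excluded exactly as in the proof of Lemma~\ref{main1} --- or a $(-1)$-curve meeting both ends of the T-chain $C$, which is impossible by Remark~\ref{ends} since $K_W$ is ample. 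This yields the count $1$.

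In the type~II picture the attaching curve is glued at the inner end of the $(-2)$-tail, and the contraction unwinds as a staircase: for each $k=1,\dots,s$ there is exactly one $E_{j_k}$ whose box curves form the initial segment $C_1,\dots,C_k$, each carrying a long diagram of type~II and each satisfying $E_{j_k}\cdot(C_1+\dots+C_r)=1$, and these $s$ trees are pairwise distinct. Conversely, any $E_j$ with a long diagram has its box curves an initial segment $C_1,\dots,C_k$ of the tail, and the simple-normal-crossings and tree constraints imposed on every $E_\ell$ --- the same constraints exploited throughout the proof of Lemma~\ref{main1} --- pin it down to $E_{j_k}$; by the first step no long diagram occurs away from the tail. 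Hence the count is $s$.

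The main obstacle is precisely these two rigidity statements: showing that, with the picture of Figure~\ref{long} fixed at the $C_1$-end of $C$, the blow-up subsequence realizing it is unique, so that the intermediate total transforms meeting $C$ once are exactly $E_i$ in type~I and the $s$ staircase trees in type~II and nothing else. This is bookkeeping of the same kind as in Lemma~\ref{main1}, carried out one level up in the blow-up forest; the T-chain hypothesis on $C$ enters only through the discrepancy formula of Remark~\ref{ends}, used to forbid a $(-1)$-curve joining the two ends of $C$.
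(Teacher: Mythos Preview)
Your overall plan matches the paper's: use Lemma~\ref{main1} to reduce to classifying the $E_j$ that carry a long diagram, locate every such diagram inside the $(-2)$-tail $C_1,\dots,C_s$, and then count. The gap is in the rigidity step. Your ``converse of Lemma~\ref{main1}'' and the subsequent tracing through the blow-up order both implicitly assume that an $E_j$ with a long diagram has no components beyond $F$ and the box curves; but the diagram $\Gamma_{E_j}$ only records which $C_k$ lie in $E_j$, so a priori $E_j$ could carry an extra effective divisor $D$ supported off $C$, and then $E_j\cdot C=1+D\cdot C$ rather than $1$. Nor do you explain why two different long diagrams must share the same $(-1)$-curve $F$; your dichotomy ``second $(-1)$-curve or $(-1)$-curve meeting both ends of $C$'' is not how this is forced.

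The paper settles both issues with a single intersection computation rather than blow-up-forest bookkeeping. For any $E_l$ with $E_l\cdot C=1$, one first argues that its $(-1)$-curve equals $F$ (since $E_{m-q}\subset E_l$, a different $F'$ would produce a loop in $E_l$ or a second point of contact with $C$), hence $E_l$ has the same type as $E_i$. One then writes
\[
E_l=c_1(F+C_1)+c_2C_2+\cdots+c_sC_s+D,\qquad D\ge 0,\quad \operatorname{supp}(D)\cap C=\emptyset,
\]
and computes $E_l\cdot C=c_1+D\cdot C$. This forces $c_1=1$ and $D\cdot C=0$; ampleness of $K_W$ then gives $D=0$, since otherwise $D$ would contain a $(-1)$-curve disjoint from $C$. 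Now $E_l$ is supported on $F,C_1,\dots,C_s$, and the type pins it down: unique in type~I, and equal to one of the $s$ staircase divisors $F+C_1+\cdots+C_k$, $1\le k\le s$, in type~II. This replaces your proposed bookkeeping with a short calculation. Finally, the T-chain hypothesis enters here only through the fact that a T-chain has $(-2)$-curves at a single end (forcing every long diagram to sit in $C_1,\dots,C_s$); Remark~\ref{ends} is not used in this proof and your appeal to it is unnecessary.
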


\begin{proof}
Assume $E_i$ has in its long diagram the curves $F, C_1, \ldots C_q$ where $q\leq s$. Without loss of generality, suppose that the map $\pi \colon X \to S$ starts by blowing-down $F$, and then the curves $C_1, \ldots, C_q$ from $1$ to $q$ or $q$ to $1$, depending on the type of $E_i$. Then $E_m=F$ and $E_{m-q}=F+C_1+\ldots+C_q$.

Let $E_l$ be such that $E_l \cdot C=1$ with $l < m-q$. Then $E_l$ has a long diagram by the previous lemma. So it must have as components some or all of the $(-2)$-curves $\{C_1,\ldots,C_s\}$. Here we are using that $C$ is a T-chain, so we have $(-2)$-curves only at one end. Then $E_{m-q} \subset E_l$. If the $(-1)$-curve $F'$ in the long diagram of $E_l$ is not $F$, then we have either a loop in $E_l$ or $E_l \cdot C\geq 2$. Thus $F=F'$, and so $E_l$ is of the same type as $E_i$.

Let us write $$E_l= c_1(F+C_1) + c_2 C_2 + \ldots + c_s C_s + D $$ where $c_1 \geq 1$, $c_i \geq 0$ for $i>1$, and $D$ is an effective divisor which has no $C_j$ in its support. Notice that $E_l \cdot C= c_1+ D \cdot C =1$, and so $c_1=1$ and $D \cdot C=0$. But if $D >0$, then $D$ must intersect $C$, since otherwise $D$ contains a $(-1)$-curve disjoint from $C$, a contradiction with the assumption $K_W$ ample. So, $D=0$.

If $E_l$ is of type I, then $E_i=E_l=F+C_q+\ldots+C_1$. Notice that in this case there is a unique $E_i$ such that $E_i \cdot C=1$.

If $E_l$ is of type II, then $E_l= F+C_1+C_2+\ldots+C_k$ where $1\leq k \leq s$. Therefore, we have precisely $s$ $E_j$ such that $E_j \cdot C=1$.
\end{proof}

\begin{notation}
We will use the following notation
\begin{itemize}
\item[1.] $\delta$ is the number $s$ in Lemma \ref{main} when there is a long diagram of type II, or $1$ when there is a long diagram of type I, or $0$ otherwise.

\item[2.] $\lambda:= K_S \cdot \pi(C)$.
\end{itemize}

\label{numbers}
\end{notation}

\begin{theorem}
We have $$r-d \leq 2(K_W^2- K_S^2) + \delta - \lambda.$$
\label{main2}
\end{theorem}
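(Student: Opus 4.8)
The plan is to combine the identity from Lemma~\ref{int} with a refined lower bound on each local contribution $E_i \cdot \big(\sum_j C_j\big)$, which we now know from Lemmas~\ref{weekbound}, \ref{main1}, and \ref{main} is at least $2$ \emph{except} on the few $E_i$ governed by a long diagram. Recall that Lemma~\ref{int} gives
\[
\Big(\sum_{i=1}^m E_i\Big)\cdot\Big(\sum_{j=1}^r C_j\Big)=r-d+2-\lambda,
\]
so rearranging yields $r-d = \sum_{i=1}^m \big(E_i\cdot C\big) - 2 + \lambda$ (writing $C=\sum_j C_j$). The strategy is therefore to bound $\sum_{i=1}^m (E_i\cdot C)$ from below by $2m$ minus a correction coming from the exceptional $E_i$ with $E_i\cdot C = 1$, and then feed in the numerical relation $r-d+1-m = K_W^2-K_S^2$ to convert the $m$-dependence into $2(K_W^2-K_S^2)$.

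First I would split into cases on whether a long diagram occurs. If there is \emph{no} long diagram, Lemma~\ref{main1} forces $E_i\cdot C\geq 2$ for every $i$, so $\sum_i (E_i\cdot C)\geq 2m$, giving $r-d \geq 2m-2+\lambda$; but we want an \emph{upper} bound, so I would instead use that $E_i\cdot C \geq 2$ to write $r-d+2-\lambda = \sum_i(E_i\cdot C) \geq 2m = 2(r-d+1-(K_W^2-K_S^2))$, hence $r-d+2-\lambda \geq 2(r-d) + 2 - 2(K_W^2-K_S^2)$, which rearranges to $r-d \leq 2(K_W^2-K_S^2) - \lambda = 2(K_W^2-K_S^2)+\delta-\lambda$ since $\delta=0$ here. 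For a long diagram of type~I, Lemma~\ref{main} says exactly one index $l$ has $E_l\cdot C=1$ and all others have $E_i\cdot C\geq 2$, so $\sum_i(E_i\cdot C)\geq 2m-1 = 2(r-d+1-(K_W^2-K_S^2))-1$; the same rearrangement now produces $r-d \leq 2(K_W^2-K_S^2)+1-\lambda = 2(K_W^2-K_S^2)+\delta-\lambda$ since $\delta=1$. For a long diagram of type~II, Lemma~\ref{main} gives precisely $s=\delta$ indices with $E_i\cdot C=1$ and the rest with $E_i\cdot C\geq 2$, so $\sum_i(E_i\cdot C)\geq 2m-\delta$, and the identical manipulation gives $r-d \leq 2(K_W^2-K_S^2)+\delta-\lambda$. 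In all three cases the claimed inequality follows.

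The one subtlety I expect to need care with is bookkeeping the indices $l$ with $E_l\cdot C=1$: a priori an $E_i$ that \emph{is itself} part of a long diagram (i.e. equals $F+C_1+\cdots+C_q$ for some $q$) could have $E_i\cdot C=1$, and Lemma~\ref{main} already accounts for these — the count ``$1$ for type~I, $s$ for type~II'' is precisely the total number of such indices across the whole blow-down tower, not merely the ``new'' ones. So the clean statement to invoke is: the number of $i$ with $E_i\cdot C = 1$ equals $\delta$, and every other $i$ has $E_i\cdot C\geq 2$; this is exactly the content of Lemmas~\ref{weekbound}, \ref{main1}, and \ref{main} taken together (noting also that at most one long diagram can occur, since two distinct long diagrams would force two incompatible configurations of $(-2)$-curves at the ends of the T-chain $C$, but $C$ has $(-2)$-curves at only one end). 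With that in hand the proof is the single line
\[
r-d+2-\lambda = \sum_{i=1}^m \big(E_i\cdot C\big) \geq 2m - \delta = 2(r-d+1) - 2(K_W^2-K_S^2) - \delta,
\]
which rearranges immediately to $r-d \leq 2(K_W^2-K_S^2)+\delta-\lambda$.
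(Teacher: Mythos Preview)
Your proof is correct and follows essentially the same route as the paper: combine the identity of Lemma~\ref{int} with the lower bound $\sum_i(E_i\cdot C)\geq 2m-\delta$ coming from Lemmas~\ref{weekbound}, \ref{main1}, and \ref{main}, then substitute $m=r-d+1-(K_W^2-K_S^2)$ and rearrange. The paper's version is just your final displayed line without the preliminary case-by-case unpacking.
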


\begin{proof}
By Lemma \ref{int} and Lemma \ref{main} we have $$r - d +2 - \lambda =\Big(\sum_{i=1}^m E_i \Big) \cdot \Big(\sum_{j=1}^r C_j \Big) \geq 2m - \delta. $$ The result follows since $r-d+1-m=K_W^2-K_S^2$.
\end{proof}

\begin{corollary}
If there is no long diagram and $K_S$ is nef, then

\begin{itemize}
\item[1.] $\kappa(S)=0$ implies $r-d \leq 2 K_W^2$.

\item[2.] $\kappa(S)=1$ implies $r-d \leq 2 K_W^2-1$.

\item[3.] $\kappa(S)=2$ implies $r-d \leq 2 (K_W^2-K_S^2)-1$.
\end{itemize}
\label{nolong}
\end{corollary}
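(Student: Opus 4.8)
The plan is to derive Corollary \ref{nolong} directly from Theorem \ref{main2} by specializing the parameters $\delta$ and $\lambda$ under the two hypotheses (no long diagram, $K_S$ nef), and then invoking the structural results already established in Proposition \ref{hahah} and Proposition \ref{type}. The starting point is the inequality
\[
r-d \leq 2(K_W^2 - K_S^2) + \delta - \lambda.
\]
Since there is no long diagram, Notation \ref{numbers}(1) gives $\delta = 0$, so the bound simplifies to $r - d \leq 2(K_W^2 - K_S^2) - \lambda$ with $\lambda = K_S\cdot\pi(C)$.

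Next I would treat the three Kodaira dimensions separately. For $\kappa(S) = 0$: by Proposition \ref{type}, $S$ is a K3 or Enriques surface, so $K_S$ is numerically trivial (torsion) and $K_S^2 = 0$, hence $\lambda = K_S\cdot\pi(C) = 0$. Plugging in gives $r-d \leq 2K_W^2$. For $\kappa(S) = 1$: by Proposition \ref{hahah}, $\lambda = K_S\cdot\pi(C) \geq 1$. Also, since $K_S$ is nef here, Corollary \ref{weekineq} (or directly Corollary \ref{weekineq}'s conclusion $K_W^2 > K_S^2$ combined with $K_S^2 = 0$ for a properly elliptic surface whose minimal model has $K^2 = 0$) — more precisely, $K_S^2 = 0$ when $S$ is a minimal elliptic surface, which is the case as $S$ has no $(-1)$-curves — gives $K_S^2 = 0$. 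Thus $r - d \leq 2K_W^2 - \lambda \leq 2K_W^2 - 1$. For $\kappa(S) = 2$: again Proposition \ref{hahah} gives $\lambda \geq 1$, so $r - d \leq 2(K_W^2 - K_S^2) - \lambda \leq 2(K_W^2 - K_S^2) - 1$; note $K_W^2 - K_S^2 \geq 1$ is guaranteed by Corollary \ref{weekineq} since $K_S$ is nef, so the right-hand side is meaningful.

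The one point requiring a little care — and the step I expect to be the only genuine obstacle — is justifying $K_S^2 = 0$ in the cases $\kappa(S) = 0$ and $\kappa(S) = 1$. This is where the hypothesis that $S$ has no $(-1)$-curves enters: a surface of nonnegative Kodaira dimension with no $(-1)$-curves is its own minimal model, and minimal surfaces with $\kappa = 0$ or $\kappa = 1$ satisfy $K^2 = 0$ (for $\kappa = 0$ this is immediate from the Enriques--Kodaira classification; for $\kappa = 1$ it follows from the canonical bundle formula for a minimal elliptic surface, $K_S = \pi^*(\text{something on the base}) + \text{fiber contributions}$, all of which square to zero). Once $K_S^2 = 0$ is in hand, the rest is pure substitution. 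I should also double-check that in the $\kappa(S)=0$ case $K_S\cdot\pi(C)=0$ genuinely holds: since $K_S$ is numerically trivial, its intersection with \emph{any} divisor, in particular the effective divisor $\pi(C)$, vanishes, so this is automatic and no appeal to Proposition \ref{hahah} is needed there.

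\begin{proof}
Since there is no long diagram, $\delta = 0$ by Notation \ref{numbers}, so Theorem \ref{main2} gives
\[
r - d \leq 2(K_W^2 - K_S^2) - \lambda, \qquad \lambda = K_S \cdot \pi(C).
\]
As $S$ has no $(-1)$-curves and $\kappa(S) \geq 0$, the surface $S$ is minimal.

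If $\kappa(S) = 0$, then by Proposition \ref{type} the surface $S$ is a K3 or an Enriques surface; in either case $K_S$ is numerically trivial, so $K_S^2 = 0$ and $\lambda = K_S \cdot \pi(C) = 0$. Hence $r - d \leq 2K_W^2$.

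If $\kappa(S) = 1$, then $S$ is a minimal properly elliptic surface, so $K_S^2 = 0$ by the canonical bundle formula. By Proposition \ref{hahah}, $\lambda = K_S \cdot \pi(C) \geq 1$. Therefore $r - d \leq 2K_W^2 - \lambda \leq 2K_W^2 - 1$.

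If $\kappa(S) = 2$, then $K_S$ is nef by hypothesis, so $K_W^2 - K_S^2 \geq K_S \cdot \pi(C) = \lambda \geq 1$ by Corollary \ref{weekineq} and Proposition \ref{hahah}. Thus $r - d \leq 2(K_W^2 - K_S^2) - \lambda \leq 2(K_W^2 - K_S^2) - 1$.
\end{proof}
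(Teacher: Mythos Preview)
Your proof is correct and follows exactly the approach the paper intends: apply Theorem \ref{main2} with $\delta=0$, use $K_S^2=0$ when $\kappa(S)\in\{0,1\}$ (since $S$ is minimal), and invoke Proposition \ref{hahah} for the bound $\lambda\geq 1$ when $\kappa(S)\in\{1,2\}$. The paper leaves this corollary without an explicit proof, but the parenthetical comment after Corollary \ref{typeI} (``the proof combines Theorem \ref{main2} with properties of $\lambda$'') confirms this is the intended argument.
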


\begin{corollary}
If there is a long diagram of type I and $K_S$ is nef, then

\begin{itemize}
\item[1.] $\kappa(S)=0$ implies $r-d \leq 2 K_W^2 +1$.

\item[2.] $\kappa(S)=1$ implies $r-d \leq 2 K_W^2$.

\item[3.] $\kappa(S)=2$ implies $r-d \leq 2 (K_W^2-K_S^2)$.
\end{itemize}
\label{typeI}
\end{corollary}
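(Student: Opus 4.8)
The plan is to derive all three inequalities directly from Theorem~\ref{main2}, feeding in the classification of $S$ from Proposition~\ref{type} and the positivity $K_S\cdot\pi(C)\geq 1$ from Proposition~\ref{hahah}. Since we are assuming the presence of a long diagram of type I, Notation~\ref{numbers} gives $\delta=1$, so Theorem~\ref{main2} specializes to
\[
r-d \;\leq\; 2(K_W^2-K_S^2) + 1 - \lambda,
\]
with $\lambda=K_S\cdot\pi(C)$. It then remains only to control $K_S^2$ and $\lambda$ in each range of $\kappa(S)$, using throughout that $S$ has no $(-1)$-curves and that $K_S$ is nef.

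For $\kappa(S)=0$ I would invoke Proposition~\ref{type}: the surface $S$ is a K3 or an Enriques surface, so $K_S$ is numerically trivial; hence $K_S^2=0$ and $\lambda=0$. The displayed bound becomes $r-d\leq 2K_W^2+1$, which is case~1. For $\kappa(S)=1$, nefness of $K_S$ forces $S$ to be a minimal properly elliptic surface, so $K_S^2=0$ and the bound reads $r-d\leq 2K_W^2+1-\lambda$; now Proposition~\ref{hahah} supplies $\lambda\geq 1$ (this is the step that uses the elliptic fibration together with ampleness of $K_W$), giving $r-d\leq 2K_W^2$, case~2.

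For $\kappa(S)=2$ the surface $S$ is minimal of general type, Corollary~\ref{weekineq} guarantees $K_W^2>K_S^2$, and once more Proposition~\ref{hahah} gives $\lambda\geq 1$; substituting yields $r-d\leq 2(K_W^2-K_S^2)$, case~3. This also matches the improved table stated for the ``long diagram of type I'' case in the introduction.

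I do not expect a genuine obstacle here: the corollary is essentially bookkeeping on top of Theorem~\ref{main2}, which is where all the real work (Lemma~\ref{main1} and Lemma~\ref{main}) has already been done. The only points needing care are (i) using Proposition~\ref{type}, hence nefness and minimality of $K_S$, to conclude $K_S^2=0$ when $\kappa(S)\in\{0,1\}$, and (ii) the asymmetry that the improvement $\lambda\geq 1$ is available only for $\kappa(S)=1,2$, whereas for $\kappa(S)=0$ one only has $\lambda=0$ — which is precisely why the $\kappa(S)=0$ bound ends up one larger than in the corresponding nef, no-long-diagram case.
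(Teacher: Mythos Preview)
Your proof is correct and follows exactly the approach the paper takes: the paper's own proof is the single sentence ``In each case, the proof combines Theorem~\ref{main2} with properties of $\lambda$ (see Proposition~\ref{hahah}),'' and your argument is precisely the unpacking of that sentence, specializing $\delta=1$ and reading off $K_S^2=0$, $\lambda=0$ for $\kappa(S)=0$ and $K_S^2=0$ (resp.\ $K_S^2>0$), $\lambda\geq 1$ for $\kappa(S)=1$ (resp.\ $\kappa(S)=2$). The appeal to Corollary~\ref{weekineq} in case~3 is not needed for the inequality itself, but it does no harm.
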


\begin{proof}
In each case, the proof combines Theorem \ref{main2} with properties of $\lambda$ (see Proposition \ref{hahah}).
\end{proof}

We now want to estimate $s$ with respect to $r-d$ when there is a long diagram of type II.

\begin{lemma}
Assume that we have a long diagram of type II, and that $K_S$ is nef. Then

\begin{itemize}
\item[1.] $\kappa(S)=0,1$ implies $r-d \geq 2s$.

\item[2.] $\kappa(S)=2$ implies either $r-d \geq 2s+2$, or $r-d \geq 2s+1$ and $\lambda \geq 2$.
\end{itemize}

\label{nefestimate}
\end{lemma}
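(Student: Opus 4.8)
The plan is to follow the blow‑down $\pi\colon X\to S$ one step at a time, starting from the long diagram of type II, and to show that completing it to a minimal model is expensive enough to force the stated inequalities; the Kodaira dimension will enter only through $\lambda=K_S\cdot\pi(C)$. From the proof of Lemma~\ref{main} we may assume that $\pi$ begins by contracting $F=E_m$ and then $C_1,C_2,\dots,C_s$ in this order, so that $E_{m-k}=F+C_1+\dots+C_k$ for $1\le k\le s$, and that these are exactly the $s$ indices $j$ with $E_j\cdot\big(\sum_\ell C_\ell\big)=1$. Hence $m\ge s+1$; every remaining $E_i$ (with $i\le m-s-1$) satisfies $E_i\cdot\big(\sum_\ell C_\ell\big)\ge 2$ by Lemmas~\ref{weekbound} and~\ref{main1}; and through $r-d+1-m=K_W^2-K_S^2$ together with $K_W^2-K_S^2\ge\lambda$ (Corollary~\ref{weekineq}), the three assertions will follow once $m$ (equivalently $r-d$) is bounded below well enough. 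Write $C_{j_0}$ for the curve of the $T$‑chain, other than $C_1$, that the $(-1)$‑curve $F$ meets; note that $C_{j_0}^2\le -3$, since $\sum_i E_i\cdot C_{j_0}\ge s+1$ together with $\sum_i E_i\cdot C_{j_0}=(K_X-\pi^*K_S)\cdot C_{j_0}=-2-C_{j_0}^2-K_S\cdot\pi_*C_{j_0}$ would otherwise force $K_S\cdot\pi_*C_{j_0}<0$, against the nefness of $K_S$.

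Next I would describe the surface $Y$ obtained after these first $s+1$ contractions. Blowing down $F$ and then $C_1,\dots,C_s$ carries the point $F\cap C_{j_0}$ along the $(-2)$‑chain, so on $Y$ the strict transforms of $C_{s+1},\dots,C_r$ form the old chain plus a new edge joining the images of $C_{j_0}$ and $C_{s+1}$; thus the images of $C_{s+1},\dots,C_{j_0}$ bound a cycle, the image of $C_{j_0}$ has self‑intersection $C_{j_0}^2+s+1$, and every other member of the cycle still has self‑intersection $\le -2$. The same inequality $\sum_i E_i\cdot C_{j_0}\ge s+1$ combined with $K_S$ nef gives $C_{j_0}^2\le -(s+3)$, so the image of $C_{j_0}$ in $Y$ also has self‑intersection $\le -2$: on $Y$ we therefore have a cycle of smooth rational curves all of whose self‑intersections are $\le -2$. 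Since $S$ has no $(-1)$‑curves, while no exceptional tree $E_i$ can contain this cycle, the remaining $m-s-1$ blow‑downs of $\pi$ are forced to interact with it, and one should get a lower bound on $m$ out of this.

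The heart of the proof, and the step I expect to be the main obstacle, is to turn this interaction into a quantitative statement yielding $r-d\ge 2s$ (and the refinements). Concretely, before $\pi$ can contract a member of the cycle it must first raise that curve to self‑intersection $-1$ using $(-1)$‑curves whose exceptional trees meet $\sum_\ell C_\ell$, and because such a $(-1)$‑curve cannot create a loop inside a tree $E_i$, while the two neighbours of every cycle‑vertex are themselves $\le -2$ curves of the cycle, these forced incidences accumulate—on top of those already recorded in Lemma~\ref{int}—to exactly the deficit over the naive estimate $r-d\le 2(K_W^2-K_S^2)+s-\lambda$ of Theorem~\ref{main2}; I expect this to require a careful case analysis, distinguishing in particular whether $C_{j_0}$ itself is eventually contracted and how the intervening $(-1)$‑curves attach to the cycle, and this is where the gain of a further $s$ must come from. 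The Kodaira dimension is then inserted through Proposition~\ref{hahah}: for $\kappa(S)=0$ one has $\lambda=0$ and the plain bound $r-d\ge 2s$; for $\kappa(S)=1$ one has $\lambda\ge 1$, which already gives $r-d\ge 2s$; and for $\kappa(S)=2$ one has $\lambda\ge 1$, hence $r-d\ge 2s+1$, while if $\lambda=1$ a cycle of $\le -2$ curves cannot be destroyed without one further blow‑down, supplying the extra unit and so $r-d\ge 2s+2$. Feeding these back into Theorem~\ref{main2} is precisely what produces the type II bounds of the next theorem.
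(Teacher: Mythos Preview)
Your opening is on the right track: the bound $\alpha:=-C_{j_0}^2\ge s+3$ (and $\ge s+4$ when $j_0=s+1$, because then $E_{m-s}\cdot C_{j_0}=2$) that you extract from $\sum_i E_i\cdot C_{j_0}\ge s+1$ together with $K_S$ nef is correct and is in fact the key numerical input. The gap is in what you do with it. You propose to follow the further blow-downs and argue that destroying the cycle on $Y$ forces enough extra incidences; you yourself flag this as ``the main obstacle'' and leave it as an expected case analysis. The paper bypasses this entirely by using a combinatorial identity for $T$-chains: writing $C$ as $[2,\dots,2,x_1,\dots,x_{r-s-1},2+s]$ (the only shape compatible with a long diagram of type~II and $K_W$ ample), one has
\[
\sum_{i=1}^{r-s-1}(x_i-2)=r-s-d+2,
\]
equivalently $\sum_j b_j=3r+2-d$ for any $T$-chain $[b_1,\dots,b_r]$. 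Since one $x_i$ equals $\alpha$ and (when $j_0>s+1$) also $x_1\ge 3$, this gives $r-s-d+2\ge s+2$ immediately, i.e.\ $r-d\ge 2s$. Without this identity your cycle-tracking plan does not obviously close: the information you have accumulated so far only gives $m\ge s+1$ and $\sum E_i\cdot C\ge 2m-s$, which via Lemma~\ref{int} recovers nothing beyond Theorem~\ref{main2}.

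Your handling of $\kappa(S)=2$ is also incorrect as written. The implication ``$\lambda\ge 1$, hence $r-d\ge 2s+1$'' is a non sequitur: $\lambda$ enters Theorem~\ref{main2} on the \emph{upper}-bound side of $r-d$, not the lower. In the paper the extra unit comes instead from strengthening the bound on $\alpha$: on a surface of general type the cycle on $Y$ cannot consist entirely of $(-2)$-curves, which via the identity above forces an additional $x_i-2\ge 1$ and hence $r-d\ge 2s+1$. The dichotomy ``$r-d\ge 2s+2$, or $r-d\ge 2s+1$ and $\lambda\ge 2$'' is then obtained by listing the finitely many $T$-chains with $r-d=2s+1$ (there are only a handful of shapes) and checking in each that any further $(-1)$-curve in $\pi$ must meet $\pi(C)$ enough to give $\lambda\ge 2$. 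Your sketch ``a cycle of $\le -2$ curves cannot be destroyed without one further blow-down'' gestures at the first half of this, but does not supply the second.
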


\begin{proof}
We divide this into three cases according to the position of the $\bullet$ curve $\Gamma$ which intersects $F$ (see Figure \ref{long} right). We denote its self-intersection by $-\alpha$. Since $C$ is a T-configuration, we have the three cases:
$$[2,\ldots,2,x_1,x_2,\ldots,x_{r-s-1},2+s],$$
$$[2,\ldots,2,3,2,\ldots,2,3+s],$$
and
$$[2,\ldots,2,4+s],$$
for some $s\ge 1$.

The two last cases are not possible for a long diagram of type II. In the last we have Remark \ref{ends} ($\phi(F)\cdot K_W=0$). For the other, we have that $\Gamma$ is the $(-3)$-curve, but $s \geq 1$ contradicts the fact that, at the end, $K_S$ is nef. So, we need to analyze only the first case. In that case, we have the following relation (see e.g. \cite[proof of Lemma 8.6]{HP2010}) $$d-3r-2=-2s - \sum_{i=1}^{r-s-1} x_i -(2+s).$$

\bigskip

\underline{$\Gamma$ at the end of $C$:} This case is impossible by Remark \ref{ends} ($\phi(F)\cdot K_W=0$).

\bigskip

\underline{$\Gamma$ intersects a $\Box$:} Notice that we have $x_1=\alpha \geq s +4$ because $K_S$ is nef, and the T-chain is of the form
$[2,\ldots,2,x_1,x_2,\ldots,x_{r-s-1},2+s]$. We reorganize the formula above as $\sum_{i=1}^{r-s-1} (x_i-2) = r-s-d+2$, and so, since $x_i-2 \geq0$, we obtain $$x_1-2 \leq r-s-d+2.$$ Because $s+4 \leq x_1$, we obtain $2s \leq r-d$.

If $S$ is of general type, then $\alpha \geq s +5$. Then we do the same and get $2s+1 \leq r-d$. If there is another $x_i$ (apart from $x_1$) with $x_i \geq 3$, then we obtain $2s+2  \leq r-d$. Let us assume that there is no such $x_i$ and that $\alpha=s+5$. Then after blowing-down $F$ and the $s$ $(-2)$-curves, we obtain a surface $S'$ such that $K_{S'} \cdot \Gamma=1$. Therefore, either $S'=S$ or there is a $(-1)$-curve intersecting only the end $(-2-s)$-curve. In either case $\lambda \geq 2$.

\bigskip

\underline{$\Gamma$ is adjacent to two $\bullet$'s:} This means $\Gamma$ does not intersect a $\Box$, and it is not at the end of $C$. Also, by adjunction and $K_S$ nef, we have $\alpha \geq s +2$. If $\alpha \geq s +3$, then $$s+3-2 + 1 \leq s+3-2 + x_1-2 \leq r-s-d+2 ,$$ which gives the desired result, $2s \leq r-d$.

The bad case to have the desired inequality is $\alpha=s+2$ and $s+1=r-s-d+2$. Then $C$ has continued fraction $[2,\ldots,2,3,2,\ldots,2,s+2,s+2]$, but then we will have a contradiction with $K_S$ nef, since some curves will become negative for canonical class. Therefore, we also have $2s \leq r-d$ in this case.

Let us consider the case $S$ of general type. Notice that $\alpha \geq s+4$ implies $r-d \geq2s+1$. So, let us assume $r-d = 2s$ and $\alpha=s+3$. Then, since $\sum_{i=1}^{r-s-1} (x_i-2) = r-s-d+2$, we have for some $\varepsilon \geq0$ $$x_1-2+(s+3-2)+\varepsilon =r-s-d+2=s+2,$$ and so $x_1=3$ and $\varepsilon=0$. Thus $x_i=2$ for all $i\neq 1$. But after contracting $F$ and $C_1, \ldots, C_s$, we obtain a cycle of $(-2)$-curves, and that is impossible in a general type surface (minimal or not). Therefore $r-d \geq 2s+1$.

Let us now consider $S$ of general type and $\alpha \geq s+5$. That implies $r-d \geq2s+2$. Let us then assume:

\textbf{(I)} $\alpha=s+3$ and $r-d=2s+1$. Then, as above, $x_1-2+s+1+\varepsilon=s+3$, and so $x_1=3$ or $4$.

If $x_1=3$, then for some $i\neq1$, we have $x_i=3$, and $x_j=2$ for all other $j$ (not corresponding to $\alpha$). We have two possibilities for $C$: $$[2,\ldots,2,3,s+3,2,\dots,2,3,s+2] \ \ \ \ \ \ [2,\ldots,2,3,2,\ldots,2,3,2,\ldots,2,s+3,s+2]$$ In the first case, we get after contracting $F, C_1, \ldots, C_s$ a cycle of two $(-2)$-curves, and that is not possible in a general type surface. In the second case, after contracting $F, C_1, \ldots, C_s$, we obtain a surface $S'$ and the configuration of curves shown in Figure \ref{f12}, where we can see self-intersections and the point $P=C_{s+1} \cap \Gamma$. If $S'= S$, then $\lambda \geq 2$ since $s \geq 1$. If $S' \neq S$, then there is a $(-1)$-curve $F'$ on $S'$. Then $F'$ intersects at most one $(-2)$-curve, and transversally, because $K_S$ is nef. So $P$ is not in $F'$. If $F'$ intersects the $(-2)$-curve chain in $S'$, then the $(-3)$-curve becomes negative for the canonical class after contraction, a contradiction. So $F'$ is disjoint from that chain. If $F'$ touches the $(-3)$-curve, then it can only be at one transversal point (since $K_S$ is nef), but then we obtain a cycle of $(-2)$-curves, which is not possible. So, $F'$ intersects the $(-2-s)$-curve, and since $K_W$ is ample (and $P$ is not in $F'$), it must be at least at two points. Then $\lambda \geq 2$.

\begin{figure}[htbp]
\includegraphics[width=3.5cm]{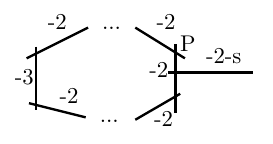}
\caption{Case \textbf{(I)} with $x_1=3$.}
\label{f12}
\end{figure}

If $x_1=4$ (so all other $x_i \neq \alpha$ are $2$), we must have the T-chain $$[2,\ldots,2,4,2,\ldots,2,s+3,2,s+2]$$ and so after contracting $F, C_1, \ldots, C_s$, we obtain a surface $S'$ and the configuration of curves shown in Figure \ref{f13}, where we can see self-intersections and the point $P=C_{s+1} \cap \Gamma$. Then the argument follows just as in the previous case, and we get $\lambda \geq 2$.

\begin{figure}[htbp]
\includegraphics[width=4cm]{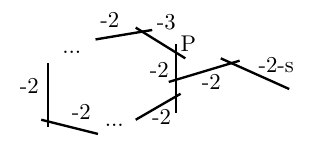}
\caption{Case \textbf{(I)} with $x_1=4$.}
\label{f13}
\end{figure}

\begin{figure}[htbp]
\includegraphics[width=3cm]{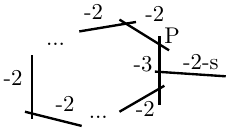}
\caption{Case \textbf{(II)}.}
\label{f14}
\end{figure}

\textbf{(II)} $\alpha=s+4$ and $r-d=2s+1$. In this case we get $x_1=3$ and $\varepsilon=0$, following same strategy. Then $C$ has the form $$[2,\ldots,2,3,2,\ldots,2,s+4,s+2]$$ and after contracting $F, C_1, \ldots, C_s$, we obtain a surface $S'$ and the configuration of curves shown in Figure \ref{f14}, where we can see self-intersections and the point $P=C_{s+1} \cap \Gamma$. Then the argument follows just as in the previous case, and we get $\lambda \geq 2$.
\end{proof}

\begin{theorem} Assume $K_S$ is nef.
\begin{itemize}
\item[1.] If $\kappa(S)=0$, then $r-d \leq 4 K_W^2$.

\item[2.] If $\kappa(S)=1$, then $r-d \leq 4 K_W^2-2$.

\item[3.] If $\kappa(S)=2$, then $$r-d \leq 4 (K_W^2-K_S^2)-4$$ when $K_W^2-K_S^2>1$, $r-d \leq 1$ otherwise.
\end{itemize}
\label{nef}
\end{theorem}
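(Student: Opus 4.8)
The plan is to assemble Theorem \ref{nef} from the three regimes already isolated: no long diagram (Corollary \ref{nolong}), long diagram of type I (Corollary \ref{typeI}), and long diagram of type II (Theorem \ref{main2}, refined by Lemma \ref{nefestimate}). Two preliminary observations streamline the bookkeeping. First, from $K_S^2-m+r-d+1=K_W^2$ and ampleness of $K_W$ we get that $K_W^2$ is a \emph{positive integer}; moreover, when $\kappa(S)\in\{0,1\}$ Proposition \ref{type} gives $K_S^2=0$, so $K_W^2-K_S^2=K_W^2$, while when $\kappa(S)=2$ Proposition \ref{type} gives $K_W^2-K_S^2\ge 1$. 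Second, by Proposition \ref{hahah} we have $\lambda=K_S\cdot\pi(C)\ge 1$ when $\kappa(S)\in\{1,2\}$, and $\lambda=0$ when $\kappa(S)=0$ (a K3 or Enriques surface has $K_S$ numerically trivial). Finally, if some $E_i$ carried a type I and some $E_j$ a type II long diagram, both would satisfy $E_i\cdot C=E_j\cdot C=1$, and the uniqueness statement in Lemma \ref{main} would force $E_i=E_j$, a contradiction; hence at most one of the two types occurs, and the trichotomy ``no long diagram / type I / type II'' is well posed.

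In the first two regimes the conclusion is immediate. If there is no long diagram, Corollary \ref{nolong} gives $r-d\le 2K_W^2$, $2K_W^2-1$, $2(K_W^2-K_S^2)-1$ in the three Kodaira cases, and each is $\le$ the bound of Theorem \ref{nef} because $K_W^2\ge 1$ (in particular, for $\kappa(S)=2$ with $K_W^2-K_S^2=1$ this already gives $r-d\le 1$). If there is a long diagram of type I, Corollary \ref{typeI} gives $r-d\le 2K_W^2+1$, $2K_W^2$, $2(K_W^2-K_S^2)$; using $K_W^2\ge 1$ the first two are $\le 4K_W^2$ and $\le 4K_W^2-2$, and using $K_W^2-K_S^2\ge 2$ the third is $\le 4(K_W^2-K_S^2)-4$. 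This leaves exactly one residual subcase, $\kappa(S)=2$ with $K_W^2-K_S^2=1$ and a type I long diagram, treated below.

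For a long diagram of type II we have $\delta=s$, so Theorem \ref{main2} reads $r-d\le 2(K_W^2-K_S^2)+s-\lambda$, and Lemma \ref{nefestimate} converts this into a bound purely in $K_W^2-K_S^2$ and $\lambda$. When $\kappa(S)\in\{0,1\}$, $s\le (r-d)/2$ yields $r-d\le 4K_W^2-2\lambda$, hence $r-d\le 4K_W^2$ for $\kappa(S)=0$ and $r-d\le 4K_W^2-2$ for $\kappa(S)=1$. When $\kappa(S)=2$ and $K_W^2-K_S^2\ge 2$, the dichotomy of Lemma \ref{nefestimate} gives either $s\le (r-d)/2-1$, whence with $\lambda\ge 1$ we get $r-d\le 4(K_W^2-K_S^2)-4$, or $s\le (r-d-1)/2$ together with $\lambda\ge 2$, whence $r-d\le 4(K_W^2-K_S^2)-5$. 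Finally, when $\kappa(S)=2$ and $K_W^2-K_S^2=1$, Corollary \ref{weekineq} forces $\lambda\le 1$ and Proposition \ref{hahah} forces $\lambda\ge 1$, so $\lambda=1$; then Lemma \ref{nefestimate} requires $r-d\ge 2s+2$, while Theorem \ref{main2} gives $r-d\le s+1< 2s+2$, a contradiction, so a type II long diagram cannot occur in this subcase.

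It remains to prove $r-d\le 1$ for $\kappa(S)=2$, $K_W^2-K_S^2=1$, and a type I long diagram, where Corollary \ref{typeI} only gives $r-d\le 2$. Here $\lambda=1$ (as above) and $m=r-d$. By Lemma \ref{int}, $\big(\sum_i E_i\big)\cdot\big(\sum_j C_j\big)=r-d+2-\lambda=m+1$; since $E_i\cdot C\ge 1$ for all $i$ (Lemma \ref{weekbound}) and $E_m\cdot C\ge 2$ (it is a $(-1)$-curve), equality forces $E_i\cdot C=1$ for every $i<m$, so by Lemma \ref{main1} each such $E_i$ carries a (type I) long diagram, and the uniqueness in Lemma \ref{main} forces $m-1=1$, i.e. $m=2$, $r-d=2$, and $K_S^2=K_W^2-1$. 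If $K_W^2=1$ then $K_S^2=0$, contradicting $\kappa(S)=2$. For $K_W^2\ge 2$ one argues directly that no type I long diagram is compatible with $m=2$: $E_1$ is the total transform of the first of the two blow-ups, so $E_1=\widetilde\ell_1+F$ with $\widetilde\ell_1$ irreducible, which forces the type I long diagram of $E_1$ to consist of $F$ and a single $(-2)$-curve $C_1=\widetilde\ell_1$, so $F$ is a $(-1)$-curve meeting $C_1$ and exactly one further curve $C_k$ of the $T$-chain; pushing the images of $C_2,\dots,C_r$ down to $S$ and using that $S$ has no $(-1)$-curve, together with $\lambda=1$ (so all but one of the curves $\pi(C_j)$ are $(-2)$-curves on $S$), reduces the possibilities to a short explicit list of $T$-chains, each excluded either by Remark \ref{ends} (when $F$ meets the two ends of $C$, forcing $\phi(F)\cdot K_W=0$) or by direct inspection. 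This combinatorial analysis of the two blow-ups is the only delicate point; everything else is assembly of the earlier results.
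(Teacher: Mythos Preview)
Your assembly of Corollaries \ref{nolong}, \ref{typeI}, Theorem \ref{main2}, and Lemma \ref{nefestimate} is exactly the paper's approach, and your handling of the no-long-diagram case, the type I case for $K_W^2-K_S^2\ge 2$, and the type II case (including the impossibility of type II when $\kappa(S)=2$ and $K_W^2-K_S^2=1$) is correct and matches the paper.

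The one genuine gap is the residual subcase $\kappa(S)=2$, $K_W^2-K_S^2=1$, type I. Your route to $m=2$ via the equality $\sum_i E_i\cdot C=m+1$ together with the uniqueness clause in Lemma \ref{main} is fine (indeed a bit cleaner than the paper's). But from $m=2$ you only \emph{announce} a contradiction: you claim a ``short explicit list of $T$-chains, each excluded either by Remark \ref{ends} \ldots\ or by direct inspection'' without producing the list or the inspections. That is the missing step. (The side case $K_W^2=1$ is also superfluous: since $\kappa(S)=2$ forces $K_S^2\ge 1$, the hypothesis $K_W^2-K_S^2=1$ already gives $K_W^2\ge 2$.) The paper closes this subcase more concretely: it uses that in a type I long diagram the divisor $E_i$ contains $F$ together with the \emph{entire} block of ending $(-2)$-curves, so if there are $s$ of them then $E_i$ has $s+1$ components and hence $m\ge s+1$; with $m\le 2$ this forces $s=1$, and with $s=1$ and $r-d=2$ the $T$-chain must be $[2,3,2,\ldots,2,4]$, which is then incompatible with the type I picture. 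To complete your argument you should either import this observation about type I (so that $s=1$ is forced and the $T$-chain is pinned down), or actually carry out your promised case analysis: push $C_2,\ldots,C_r$ to $S$, use minimality of $S$ to get $\pi(C_2)^2\le -2$ (hence $b_2\ge 3$, i.e.\ $s=1$) and $\pi(C_k)^2\ne -1$ (hence $b_k\ne 3$), and then run through the finitely many $T$-chains with $s=1$ and $r-d=2$.
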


\begin{proof}
In the case $\kappa(S)=0$, we have $\lambda=0$. By Theorem \ref{main2} and Lemma \ref{nefestimate}, we have that for a long diagram of type II, $r-d \leq 4 K_W^2$. Then we compare with Corollary \ref{nolong} and Corollary \ref{typeI} to say that in any situation $r-d \leq 4 K_W^2$.

In the case $\kappa(S)=1$, we have $\lambda \geq 1$ by Proposition \ref{hahah}. By Theorem \ref{main2} and Lemma \ref{nefestimate}, we have that for a long diagram of type II, $r-d \leq 4 K_W^2-2$. Then we compare with Corollary \ref{nolong} and Corollary \ref{typeI} to say that in any situation $r-d \leq 4 K_W^2-2$.

In the case $\kappa(S)=2$, we also have $\lambda \geq 1$ by Proposition \ref{hahah}. By Theorem \ref{main2} and Lemma \ref{nefestimate}, we have that for a long diagram of type II, $r-d \leq 4 (K_W^2-K_S^2)-4$. Then we compare with Corollary \ref{nolong} and Corollary \ref{typeI} to say that in any situation $r-d \leq 4 (K_W^2-K_S^2)-4$, except in the case $K_W^2-K_S^2=1$, where we obtain $r-d \leq 2$, since in that case $4 (K_W^2-K_S^2)-4 \leq 2(K_W^2-K_S^2)$. But $K_W^2-K_S^2=1$ implies $m=r-d$, and so $m \leq 2$. Also in this case we have a long diagram of type I, and so the number of ending $(-2)$-curves in $C$ cannot exceed $1$ (otherwise $m\geq3$). So $r-d=m=2$, and $C$ has the form $[2,3,2,\ldots,4]$, but this is not possible. So for the case $K_W^2-K_S^2=1$ we must have $r-d \leq 1$.
\end{proof}

\begin{corollary}
Let $W$ be a normal projective surface with $K_W$ ample and only T-singularities. Assume that $W$ is not rational, and that there is a $\Q$-Gorenstein deformation $(W \subset \X) \to (0 \in \D)$ over a smooth curve germ $\D$ which is trivial for one non Du Val T-singularity of $W$, and a smoothing for all the rest. Thus the general fibre $W'$ has $K_{W'}$ ample, and it has one T-singularity $\frac{1}{dn^2}(1,dna-1)$ of length $r$. Then $\kappa(S) \leq \kappa(S')$, where $S'$ is the minimal model of the minimal resolution of $W'$, and so we can bound $r-d$ as in Theorem \ref{nef}.
\label{corKawa}
\end{corollary}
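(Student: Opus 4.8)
The plan is to concentrate on the one substantive point, the inequality $\kappa(S)\le\kappa(S')$, and to recover the rest from the results already in the paper together with the mechanism indicated in Remark \ref{kawa}.

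First I would clear the formal ground. Since $\X\to\D$ is $\Q$-Gorenstein and $K_W$ is ample, $K_{\X/\D}$ is $\Q$-Cartier and relatively ample near $W$; restricting to the general fibre shows $K_{W'}$ is ample, and flatness gives $K_{W'}^2=K_W^2$ and $\chi(\O_{W'})=\chi(\O_W)$. By hypothesis $W'$ has a single singular point, the non du Val T-singularity $\frac{1}{dn^2}(1,dna-1)$ of length $r$, so Proposition \ref{type} and Theorem \ref{neff} apply to $W'$, with $S$ there replaced by $S'$. Next I note that the argument in the proof of Proposition \ref{type} uses only that $K$ is ample and that the exceptional curves are T-chains --- each such curve is either a multisection of a ruling or lies in a fibre, and ampleness kills the second alternative on the general fibre unless the base is $\P^1$ --- so it shows, for any normal projective surface with only T-singularities and ample canonical class, that the minimal model of its minimal resolution has Kodaira dimension $-\infty$ exactly when the surface is rational. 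Hence $W$ not rational forces $\kappa(S)\ge 0$; granting $\kappa(S)\le\kappa(S')$ we get $\kappa(S')\ge 0$, so $W'$ is not rational either, $\kappa(S')\in\{0,1,2\}$, and Theorem \ref{neff} applied to $W'$ bounds $r-d$ by the stated function of $K_{W'}^2=K_W^2$ and $\kappa(S')$ --- at most $4K_W^2$ in every case, with the sharp value in a given instance obtained, as Remark \ref{kawa} says, after running the MMP.

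It remains to prove $\kappa(S)\le\kappa(S')$: a one-parameter $\Q$-Gorenstein degeneration with only T-singularities cannot raise the Kodaira dimension of the minimal resolution. I would argue by localisation plus the stable MMP. Resolving the du Val singularities of $W$ simultaneously over $\D$ (possible after a finite base change, and harmless both for the Kodaira dimension of the resolutions and for ampleness away from the points being smoothed), and then treating the non du Val T-singularities being smoothed one at a time --- the relative MMP being local over $W$ --- reduces the question to a family whose special fibre has a single Wahl singularity and whose general fibre is smooth near the corresponding point; when $d>1$ one first replaces $\frac{1}{dn^2}(1,dna-1)$ by an M-resolution, a $\Q$-Gorenstein partial resolution carrying a chain of Wahl singularities, which spreads out over $\D$ because the deformation is trivial there. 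To this reduced family I would apply \cite{HTU17}: the flipping-surfaces picture realizes a one-parameter $\Q$-Gorenstein smoothing of a Wahl singularity, on suitable birational models, as a sequence of divisorial contractions and flips (or antiflips) none of which alters the smooth general fibre. Following the minimal resolutions through each of these extremal operations and using \cite[Lemma 2.4]{K92} for how the Kodaira dimension behaves under the blow-ups and flips that occur, one finds that $\kappa$ of the resolution does not increase at any step; reassembling the local comparisons gives $\kappa(\tilde W)\le\kappa(\tilde W')$, that is $\kappa(S)\le\kappa(S')$.

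The hard part will be exactly this bookkeeping: describing precisely how $\tilde W$ and $\tilde W'$ are linked through the family MMP and verifying Kodaira-monotonicity of each elementary move, together with the ancillary points that the passage to an M-resolution globalizes over $\D$ and that localising the relative MMP at a single singular point is compatible with keeping $K$ ample elsewhere. If one prefers a short proof, all of this is subsumed in the semicontinuity of the Kodaira dimension of minimal resolutions in $\Q$-Gorenstein families that \cite{HTU17} and \cite[Lemma 2.4]{K92} provide together, and which is precisely what Remark \ref{kawa} invokes; combined with the reductions of the second paragraph, this gives the corollary.
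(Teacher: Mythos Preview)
Your proposal is correct and invokes the same two ingredients as the paper --- the stable MMP of \cite{HTU17} and Kawamata's semicontinuity \cite[Lemma 2.4]{K92} --- but the order of operations differs. The paper first resolves the \emph{constant} T-singularity simultaneously over $\D$ (trivial, since it does not vary), obtaining a family $(W_0\subset\X_0)\to\D$ whose general fibre is already the minimal resolution of $W'$; it then runs the relative MMP of \cite[Lemma 5.2, Theorem 5.3]{HTU17} on $\X_0$ to reach, after base change, a family $\X_1\to\D$ with $K_{\X_1}$ nef and only Wahl singularities on the special fibre, and applies Kawamata's lemma in one shot to get $\kappa(\tilde W_1)\le\kappa(W'_1)$, hence $\kappa(S)\le\kappa(S')$. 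Your route --- resolving du Val singularities, M-resolving, localising to single Wahl smoothings, and tracking Kodaira dimension through each individual flip --- reaches the same conclusion but is longer, and the ``hard bookkeeping'' you flag is exactly what the paper's global MMP-then-Kawamata step sidesteps. In particular there is no need for an M-resolution of the constant singularity; a simultaneous minimal resolution suffices. Your observation that the argument of Proposition \ref{type} must be read for a surface with several T-singularities in order to conclude $\kappa(S)\ge 0$ from non-rationality of $W$ is well taken; the paper uses it in the same way.
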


\begin{proof}
We resolve simultaneously the constant T-singularity in the deformation $(W \subset \X) \to (0 \in \D)$ to obtain $(W_0 \subset \X_0) \to (0 \in \D)$.
By \cite[Lemma 5.2]{HTU17} and \cite[Theorem 5.3]{HTU17}, after a possible base change, we can find a $\Q$-Gorenstein smoothing $(W_1 \subset \X_1) \to (0 \in \D)$, which is birational over $\D$ to $(W_0 \subset \X_0) \to (0 \in \D)$, such that the fibre over $0$ has only Wahl singularities \cite{W81} (i.e. non Du Val T-singularities with $d=1$), and the canonical class $K_{\X_1}$ is nef. Therefore we satisfy the conditions of \cite[Lemma 2.4]{K92}, and so we have $\kappa(\tilde{W}_1) \leq \kappa(W'_1)$ where $W'_1$ is the general fibre of $(W_1 \subset \X_1) \to (0 \in \D)$, and $\tilde{W}_1$ is the minimal resolution of $W_1$. In this way, $\kappa(S) \leq \kappa(S')$. We note that, according to \cite[Lemma 2.4]{K92}, we obtain $\kappa(S) = \kappa(S')$ if and only if $W_1$ is smooth. Finally notice that $K_{W'}$ is ample since this is a $\Q$-Gorenstein deformation with $K_W$ ample, and $\kappa(S')\geq 0$ since $\kappa(S) \geq0$ by Proposition \ref{type}. Therefore we can apply Theorem \ref{nef} to $W'$.
\end{proof}

An instance of this is a surface $W$ with no local-to-global obstructions, as in the Lee-Park examples \cite{LP07} (see also \cite{SU14}, and \cite{Urz13a} where it is done explicitly).

Based on the work done in this section and using some tricks about elliptic and rational fibrations, we have the following result when $K_S$ is not nef.

\begin{theorem}
Assume $K_S$ is not nef. Then $S$ must be rational, and
\[
r-d \leq
     \begin{cases}
       2(K_W^2 -K_S^2) - \lambda &\quad\text{if no long diagram}\\
       2(K_W^2 -K_S^2) + 1 - \lambda &\quad\text{if long diagram of type I} \\
       4 (K_W^2 -K_S^2) - 2 \lambda &\quad\text{if long diagram of type II} \\

     \end{cases}
\]
where $\lambda=K_S \cdot \pi(C)$.
\label{nonef}
\end{theorem}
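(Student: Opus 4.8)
The plan is to mirror the proof of Theorem~\ref{nef}, isolating the type~II long diagram as the only case that needs genuinely new input; the other two cases are immediate consequences of Theorem~\ref{main2}, which was proved with no nef hypothesis.

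\textbf{Step 1: $S$ is rational.} Since $\pi$ was arranged so that $S$ carries no $(-1)$-curve, $S$ is a minimal surface; a minimal surface of nonnegative Kodaira dimension has nef canonical class, so $K_S$ not nef forces $\kappa(S)=-\infty$, and then Proposition~\ref{type} leaves only the rational alternative. In fact $S\cong\P^2$ or $S\cong\F_e$ with $e\neq 1$, a strong structural constraint we will exploit below.

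\textbf{Step 2: no long diagram and type I.} Theorem~\ref{main2} gives, with $\delta=0$, the bound $r-d\le 2(K_W^2-K_S^2)-\lambda$ when there is no long diagram, and with $\delta=1$ the bound $r-d\le 2(K_W^2-K_S^2)+1-\lambda$ when the long diagram has type~I. These are exactly the first two lines of the statement, so nothing further is needed.

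\textbf{Step 3: type~II.} By Theorem~\ref{main2} we have $r-d\le 2(K_W^2-K_S^2)+s-\lambda$ with $s\ge 1$. Combining this with the inequality $r-d\ge 2s$ yields $s\le 2(K_W^2-K_S^2)-\lambda$, and substituting back gives $r-d\le 4(K_W^2-K_S^2)-2\lambda$, the desired bound. So the whole problem reduces to re-establishing the estimate $r-d\ge 2s$ of Lemma~\ref{nefestimate} when $S$ is a minimal rational surface. To do this I would re-run that lemma's case analysis. The T-chain still has one of the three listed shapes, and the shape $[2,\dots,2,4+s]$ is still killed by the discrepancy computation of Remark~\ref{ends}, which does not use nef-ness. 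The three remaining spots where "$K_S$ nef" was invoked — excluding the middle shape $[2,\dots,2,3,2,\dots,2,3+s]$, the bound $\alpha\ge s+4$ when $\Gamma$ meets a box, and the bound $\alpha\ge s+2$ together with the exclusion of $\alpha=s+2$ when $\Gamma$ meets two black dots — must each be replaced by an argument using the rational structure of $S$ and the ampleness of $K_W$. Concretely: contract the forced chain $F,C_1,\dots,C_s$ to reach a surface $S'$ dominating $S$, and then argue on $S$ itself. When $S=\F_e$, test $\pi(C)$ and the images of the leftover $(-1)$-curves against a ruling fibre $f$ (so $K_S\cdot f=-2$, and $\pi(C)\cdot f>0$ because $C$ cannot be vertical, by the ampleness argument of Proposition~\ref{hahah}); when $S=\P^2$ (which admits no fibration, and is handled separately or after one auxiliary blow-up to $\F_1$), test against a line. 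The point is that an offending small value of $\alpha$ either produces, after the contraction, a $(-1)$-curve whose image in $W$ meets $K_W$ non-positively — contradicting ampleness, via the same discrepancy bookkeeping as in Remark~\ref{ends} — or forces $f\cdot\pi(C)$ (resp. $\deg\pi(C)$) to be so large that $\lambda$ already absorbs the deficit and the inequality $r-d\ge 2s$ holds a fortiori.

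\textbf{Main obstacle.} The hard part is precisely the middle T-chain shape $[2,\dots,2,3,2,\dots,2,3+s]$ and the borderline self-intersections $\alpha\in\{s+2,s+3\}$: over nef $S$ these were eliminated outright, but over $\P^2$ and the Hirzebruch surfaces they can a priori occur, so one must carry out the honest geometry of the contraction $X\to S$ near these configurations — tracking how $F$ and the residual $(-1)$-curves of $E_m,E_{m-1},\dots$ meet $C$ — and use that the terminal surface is minimal rational. I expect this to break into a small number of sub-cases, each closed either by a discrepancy computation forcing $\phi(F')\cdot K_W\le 0$ for some $(-1)$-curve $F'$, or by a fibre/degree count on $S$ of the kind sketched above.
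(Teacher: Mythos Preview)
Your Steps 1--3 and the reduction to the single inequality $r-d\ge 2s$ in the type~II case are correct and match the paper exactly. One small correction: the middle shape $[2,\dots,2,3,2,\dots,2,3+s]$ is \emph{not} a hard case requiring nef-ness; the paper notes (and you could check directly from the discrepancies, as in Remark~\ref{ends}) that for any long diagram of type~II on that shape one already gets $\phi(F)\cdot K_W=0$, so it is ruled out by ampleness alone. Only the first shape $[2,\dots,2,x_1,\dots,x_{r-s-1},2+s]$ survives, in both the nef and non-nef settings.

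The genuine gap is in how you propose to establish $r-d\ge 2s$. Your plan is to push the configuration all the way down to the minimal model $S\in\{\P^2,\F_e\}$ and test against a ruling fibre or a line there. The paper does something quite different and more incisive: it constructs an auxiliary fibration \emph{on an intermediate surface}, using the T-chain itself. Specifically, it splits into two sub-cases according to whether $\Gamma=C_{s+1}$ (the first curve after the $(-2)$-tail) or $\Gamma=C_{s+j}$ with $j>1$. In the first sub-case, if $\alpha\le s+3$ then contracting $F,C_1,\dots,C_s$ turns $\Gamma$ into a nodal rational curve of positive self-intersection, which by Riemann--Roch induces an \emph{elliptic} fibration; the remaining curves $W_1,\dots,W_u$ of $C$ are then forced into a single fibre, and the constraints on singular fibres of an elliptic pencil (together with $K_W$ ample forbidding stray $(-1)$-sections) give a contradiction. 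In the second sub-case, the chain $C_{\alpha-1},\dots,C_1,F,\Gamma$ contracts to a smooth rational $0$-curve, inducing a \emph{genus zero} fibration in which the three neighbouring curves of $\Gamma$ in $C$ become sections; the elementary fact that a $\P^1$-fibre with a unique $(-1)$-curve has exactly two reduced components, iterated against these three sections, forces $\alpha\ge s+2$ and then kills the boundary case.

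Your alternative --- testing on the terminal $S$ --- does not obviously reach these conclusions. The obstruction is that the map $X\to S$ may involve many further blow-downs beyond $F,C_1,\dots,C_s$, so the image of the configuration in $S$ is hard to control; and your fallback clause (``$\lambda$ absorbs the deficit'') cannot help, since $r-d\ge 2s$ is an inequality intrinsic to the T-chain and the long diagram, with no $\lambda$ in it. The key idea you are missing is to manufacture a fibration out of a sub-configuration of $C\cup F$ on a partial blow-down, rather than relying on the fixed ruling of $S$.
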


\begin{proof}
By Proposition \ref{type}, we know that $S$ is rational.  We also have $$r-d \leq 2(K_W^2- K_S^2) + \delta - \lambda$$
by Theorem \ref{main2}. Therefore, it is enough to show that for the case of a long diagram of type II we have $$2\delta=2s \leq r-d.$$

Let us assume we have a long diagram of type II. We divide the analysis into three cases according to the position of the $\bullet$ curve $\Gamma$ which intersect $F$ (see Figure \ref{long} right). We denote its self-intersection by $-\alpha$. Since $C$ is a T-configuration, we have three possibilities for $C$:
$$[2,\ldots,2,x_1,x_2,\ldots,x_{r-s-1},2+s],$$
$$[2,\ldots,2,3,2,\ldots,2,3+s],$$
and
$$[2,\ldots,2,4+s],$$ for some $s\ge 1$.

The only possible one is the first case, since in the other two $\phi(F)\cdot K_W=0$. (This is another way to start the proof of Lemma \ref{nefestimate}; for the computation of discrepancies see e.g. \cite[Lemma 4.1]{Urz13a}.) The first case give us two main situations which we are going to treat separately.

\textbf{Elliptic fibration:} Assume that $C$ has continued fraction $$[2,\ldots,2,\alpha,w_1,\ldots, w_u],$$ and there is a $(-1)$-curve connecting the first $(-2)$-curve of $C$ with the curve $\Gamma$ associated to $\alpha \geq 3$. Here $u=r-s-1$ and $w_u=s+2$. Let us also assume for a contradiction that $\alpha \leq s+3$. Then, after blowing-down $F$ and all $(-2)$-curves before $\Gamma$ in $C$, we obtain a nodal curve $\Gamma'$ in a surface $S'$, which is the image of $\Gamma$, with $\Gamma'^2>0$. Let $W_1, \ldots, W_u$ be the images of the rest of the curves in $C$, so that $W_i^2=-w_i$. Let us blow-up general points in $\Gamma'$ so that the strict transform $\Gamma''$ in $S''$ has $\Gamma''^2=1$. By Riemann-Roch, the curve $\Gamma''$ defines an elliptic fibration $X' \to \P^1$, after we blow-up one base point in $S''$. The strict transform of $\Gamma''$ is a fibre. Let $W'_1$ be the strict transform of $W_1$ in $X'$. Then $W'_1$ cannot be a section. To see this, let us consider the relatively minimal fibration $X'' \to \P^1$ which has sections, and all of them are $(-1)$-curves. Therefore, there must be a $(-1)$-curve in $X'$ intersecting $W'_1$ at one point, but this would remain a $(-1)$-curve on $X$ intersecting $C$ at one point, giving a contradiction with $K_W$ ample (since this $(-1)$-curve is disjoint from the $(-1)$-sections).

Thus $W'_1, \ldots, W'_u$ are part of a fibre $G$ on $X'$, and the blow-up $X' \to S''$ is at $\Gamma'' \cap W_1$. We note that ${W'}_1^2<-2$ and ${W'}_u^2=-(s+2)<-2$. The fibre $G$ cannot be a tree because we have $(-1)$-curves in $G$, and they must touch the chain $W'_1, \ldots, W'_u$ at least twice (here we are again using that $K_W$ is ample). Therefore the only possible situation is that $G$ is a cycle, but then there is only one possible $(-1)$-curve in $G$, connecting $W'_1$ with $W'_u$, and both are $(-3)$-curves, and so $s=1$. The corresponding situation cannot be.

Therefore, in this case we have $\alpha \geq s+4$, and as in Lemma \ref{nefestimate}, we obtain $$s+4-2= s+2 \leq r-s-d+2$$ and so $2s \leq r-d$.

\textbf{Rational fibration:} Let us assume now that the T-chain has the form $[2,\ldots,2,x_1,\ldots,x_u,s+2]$ with $x_1\geq 3$, and a $(-1)$-curve $F$ connecting the first $(-2)$-curve of $C$ with an $x_j=\alpha$ with $j>1$. First, we show that $\alpha \geq s+2$.

Assume $\alpha \leq s+1$. Let us write the continued fraction of $C$ as $$[2,\ldots,2,s_1,y_1,\ldots,y_u,s_2,\alpha,s_3,z_1,\ldots,z_v]$$ where the number of $2$'s on the left is $s \geq \alpha-1$. We first show that $[y_1,\ldots,y_u]$ and $[z_1,\ldots,z_v]$ must both be empty, and so $C$ must have continued fraction $[2,\ldots,2,s_1,s_2,\alpha,s_3]$, and then we will analyze that case.

Let us say that the $2$'s on the left correspond to $C_1,\ldots,C_{\alpha-1}$. The key point of the argument is to look at $C_{\alpha-1},\ldots,C_1,F,\Gamma$. That configuration contracts to a $\P^1$ with $0$ self-intersection in a rational surface, and so it defines a genus $0$ fibration $f \colon X \to \P^1$ with $C_{\alpha-1},\ldots,C_1,F,\Gamma$ as one of its fibres. The three curves $S_i$ in $C$ which have $S_i^2=-s_i$ are sections of this fibration, since they intersect the previous fibre at one point each. The configurations of curves corresponding to $[y_1,\ldots,y_u]$ and $[z_1,\ldots,z_v]$ belong to fibres of $f$, since they are disjoint from $C_{\alpha-1},\ldots,C_1,F,\Gamma$.

We will use several times the following simple fact: In a genus $0$ fibration, a fibre which has only one $(-1)$-curve has exactly two reduced components. In particular, there cannot be $3$ sections intersecting $3$ distinct components.

Let $F'$ be a $(-1)$-curve in the fibre corresponding to $[y_1,\ldots,y_u]$. Then since $K_W$ is ample, $F'$ must intersect $C$ twice somewhere. Notice that $F'$ cannot intersect $C_1,\ldots,C_{\alpha-1},S_1,S_2$. Let us say that $F'$ intersects $S_3$, which must be transversal at one point. Then $F'$ can only intersect $[y_1,\ldots,y_u]$, since otherwise $F'$ intersects $[z_1,\ldots,z_v]$ giving that $[y_1,\ldots,y_u]$, $[z_1,\ldots,z_v]$ and $F'$ are all part of the same fibre. But $S_3$ is a section and already intersects $z_1$, so this cannot be. Let $F''$ be another $(-1)$-curve in the fibre corresponding to $[y_1,\ldots,y_u]$. Since $F'$ already intersects $S_3$, we see that $F''$ intersects $[y_1,\ldots,y_u]$ twice, a contradiction. Thus $F'$ is the only $(-1)$-curve in the fibre corresponding to $[y_1,\ldots,y_u]$, and by the fact above, this is a contradiction. Therefore $F'$ intersects $[y_1,\ldots,y_u]$ and $[z_1,\ldots,z_v]$ at one point each. Notice that there is no room for another $(-1)$-curve in that fibre. Therefore, by the fact above, we obtain a contradiction, and so there is no $[y_1,\ldots,y_u]$.

Now let $F'$ be a $(-1)$-curve in the fibre corresponding to $[z_1,\ldots,z_v]$. Then we have that either $F'$ intersects $S_1$ and $S_2$ at one point each or $F'$ intersects $S_i$ but not $S_j$, and so it also intersects $[z_1,\ldots,z_v]$. Notice that in the first case, we need to have another $(-1)$-curve in the fibre corresponding to $[z_1,\ldots,z_v]$ (by the fact above), but there is no room to have that extra $(-1)$-curve. Therefore we are in the second case, and there must exist another $(-1)$-curve $F''$ which intersects $S_j$ but not $S_i$, and intersects $[z_1,\ldots,z_v]$. There is no room for another $(-1)$-curve, and so the fibre corresponding to $[z_1,\ldots,z_v]$ must be $[1,2,\ldots,2,1]$ and so $z_i=2$ for all $i$. But $z_v=s+2$ with $s\geq 1$, a contradiction.

Thus $C$ must have continued fraction of the form $[2,\ldots,2,s_1,s_2,\alpha,s_3]$ (or $[2,\ldots,2,s_1,\alpha,s_3]$). Notice that $s_3=2+s$, $\alpha \leq s+1$, and we are assuming there are $\alpha-1$ $(-2)$-curves before $s_1$. We have a $(-1)$-curve $F$ connecting $C_1$ with $\Gamma$ which has $\Gamma^2=-\alpha$. After blowing down $F$ and $C_1, \ldots, C_{\alpha-1}$, we obtain a $\P^1$-fibration defined by the image of $\Gamma$.

Say we have $[2,\ldots,2,s_1,s_2,\alpha,s_3]$. We now can consider a model $\F_{s+2}$ by blowing down all $(-1)$-curves disjoint from the $(-s-2)$-curve $C_r$ (which comes from $C$). Each of these $(-1)$-curves should intersect transversally the $S_1$ once and the $S_2$ once, since the $S_i$ are sections. If we choose one $(-1)$-curve, then there must be another $(-1)$-curve in the same fibre which misses both $S_1$ and $S_2$. So it can only intersect $S_3$ and at one point at most, since $S_3$ is a section, a contradiction with $K_W$ ample. Thus This fibration must be already minimal, but $S_1$ and $S_2$ intersect at one point and $S_3^2=-s_3 \leq -3$, a contradiction. So this is impossible.

Then $C$ must have the form $[2,\ldots,2,s_1,\alpha,s_3]$. But after blowing-down as we just did, we have a $\P^1$-fibration where $S_1$ is a double section that, by similar reasons as above, cannot exist.

In this way, we have shown that $x_j=\alpha \geq s+2$. We also have $x_1\geq 3$. We recall that the T-chain is $[2,\ldots,2,x_1,\ldots,x_u,s+2]$. We want to show that $\sum_{i=1}^u (x_i-2) \geq s+2$, so that  $s+2 \leq r-s-d+2$ and so $2s \leq r-d$.

On the contrary, assume $x_j=\alpha=s+2$, $x_1=3$, and for all other $i\neq j$ we have $x_i=2$. Then the T-chain is $[2,\ldots,2,3,2,\ldots,2,s+2,s+2]$, and there is a $(-1)$-curve $F$ connecting $C_1$ with $\Gamma$, the $\P^1$ in $C$ with $\Gamma^2=-\alpha=-s-2$. After contracting $F$ and $C_1, \ldots, C_{\alpha-1}$, we obtain a cycle of $(-2)$-curves together with a $(-1)$-curve $\Gamma'$, the image of $\Gamma$, and a $(-s-2)$-curve $\Delta$ transversal at one point to $\Gamma'$. As before, by Riemann-Roch, that cycle (it has self-intersection $+1$) defines an elliptic fibration after blowing up one point. As before, $\Delta$ cannot be a section, but then $\Delta$ is part of a fibre. Then the only possibility that works is $\Delta$ is a $(-4)$-curve, but then $s=2$ and in this case we must have $s \geq 3$, a contradiction.

\end{proof}

\begin{remark}
In \cite[Section 5]{SU14}, we give tables describing T-singularities with $d=1$ in KSBA stable surfaces that are $\Q$-Gorenstein smoothable to simply connected surfaces of general type with $1 \leq K_W^2 \leq 4$, and $p_g=q=0$. Most of them are rational, and nearly all are T-singularities of long length. By means of the explicit MMP in \cite{HTU17}, we can realize these rational examples $W$ in such a way that $S=\P^2$; for details see \cite{Urz13}. In many cases the curve $\pi(C)$ has degree $7$. If we assume degree $7$ in Theorem \ref{nonef} (and $d=1$), we obtain that the length is at most $4K_W^2+7$ if $X$ contains a long diagram of type II, $2K_W^2+5$ if a long diagram of type I, or $2K_W^2+4$ otherwise.

The following is an example of a rational $W$ which achieves the bound with $S=\P^2$, $K_W^2=2$, $\pi(C)$ of degree $7$, and $W$ has a long diagram of type $I$. There are no local-to-global obstructions for $W$, and the singularity has continued fraction $[2,\ldots,2,12]$. Thus we have $r=9=2 K_W^2+5$.

The example comes from the table for $K^2=2$ in \cite{SU14}. The T-singularity has $d=1$, $n=10$, $a=1$. The plane curve $\pi(C)$ has degree $7$, and it contains $7$ distinct nodes, and one singularity locally of type $(y^2-x^{16})$. So, from $\P^2$ we blow-up $15$ times to resolve the singularities of the septic, and then we blow-up once more to obtain a chain of $8$ $(-2)$-curves. The strict transform of the septic has self-intersection $-12$, and we get the T-chain we want. Its contraction produces the surface $W$, where $K_W^2=9-16+9=2$. We omit the proof of ampleness and no local-to-global obstructions.
\label{exrat}
\end{remark}

We now provide an example (and a method to produce more examples) with a fixed rational $W$ but $\lambda$ arbitrarily large, by choosing $\pi$ appropriately. The key lemma is the following.

\begin{lemma}
Let $X' \to \P^1$ be a relatively minimal rational elliptic fibration with infinitely many $(-1)$-curves. Let $D$ be a section. Then there are infinitely many $(-1)$-curves $\Gamma_i$ such that $\lim_{i \to \infty} (\Gamma_i \cdot D) = \infty$. Moreover we can choose a composition of blow-downs $\sigma_i \colon X' \to \P^2$ such that the degree of $\sigma_i(D)$ approaches infinity as $i \to \infty$.
\label{unbounded}
\end{lemma}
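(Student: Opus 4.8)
The plan is to treat the two assertions of the lemma separately, the first by a lattice-theoretic counting argument on the elliptic surface and the second by a one-step blow-down followed by del Pezzo theory.

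First I would record the basic geometry of $X'$. Since $D$ is a section, the fibration $X'\to\P^1$ has no multiple fibres, so $-K_{X'}$ is the class of a fibre $F$, which is nef; by adjunction every $(-1)$-curve $E$ then has $E\cdot F=1$, hence every $(-1)$-curve is a section, and by hypothesis there are infinitely many. Now work in $\operatorname{Pic}(X')$, which is the odd unimodular lattice of signature $(1,9)$; here $K^\perp$ is negative semidefinite with radical $\Z K$, and $K^\perp/\Z K$ is a negative definite lattice (isomorphic to $E_8$). For a section $P$ one has $[P]-[D]\in K^\perp$ and $([P]-[D])^2=-2-2(P\cdot D)$, a value unchanged when passing to $K^\perp/\Z K$. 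The assignment $P\mapsto\overline{[P]-[D]}$ is injective on sections: distinct $(-1)$-curves have distinct classes, and $[P]-[P']\in\Z K$ would give $([P]-[P'])^2=0$, i.e. $P\cdot P'=-1$, which is impossible for distinct irreducible curves. Since a negative definite lattice has only finitely many vectors of any bounded norm, for each $N$ only finitely many sections $P$ satisfy $P\cdot D\le N$; enumerating the sections as $\Gamma_1,\Gamma_2,\dots$ gives $\Gamma_i\cdot D\to\infty$, proving the first assertion.

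For the second assertion, discard the finitely many $i$ with $k_i:=\Gamma_i\cdot D\le 1$, and for the remaining $i$ contract the $(-1)$-curve $\Gamma_i$ to get a smooth rational surface $Y_i$ with $K_{Y_i}^2=1$; the image $D'$ of $D$ satisfies $D'^2=k_i^2-1\ge 3$. The key claim is that $Y_i$ is a weak del Pezzo surface of degree $1$: indeed $-K_{Y_i}$ is nef, because for any irreducible $\bar C\subset Y_i$ with strict transform $C\subset X'$ one computes $-K_{Y_i}\cdot\bar C=(-K_{X'}+\Gamma_i)\cdot C=F\cdot C+\Gamma_i\cdot C\ge 0$, and $K_{Y_i}^2=1>0$ gives bigness. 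By the classification of weak del Pezzo surfaces, $Y_i$ is a blow-up of $\P^2$ at eight points in almost general position, so it admits a composition of blow-downs $\tau_i\colon Y_i\to\P^2$. Since $D'^2\ge 3$, the image of $D'$ never becomes a $(-1)$-curve along $\tau_i$ and so is not contracted; setting $\sigma_i:=\tau_i\circ(\text{contraction of }\Gamma_i)\colon X'\to\P^2$, the plane curve $\sigma_i(D)$ has degree $e_i$ with $e_i^2=\sigma_i(D)^2\ge D'^2=k_i^2-1$ because self-intersection does not decrease under blow-down. Hence $e_i\to\infty$.

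The step I expect to be the real obstacle is establishing that $Y_i$ is a weak del Pezzo surface, i.e. ruling out that $Y_i$ is a blow-up of a Hirzebruch surface $\F_n$ with $n\ge 3$ (which would not dominate $\P^2$); the clean resolution is precisely the nefness computation for $-K_{Y_i}$ above, after which one invokes the standard structure theory of (weak) del Pezzo surfaces. The other ingredients — the $E_8$-structure of $K^\perp/\Z K$, finiteness of lattice points of bounded norm, the canonical bundle formula ruling out multiple fibres, and the blow-down formula for self-intersection — are all classical, so no further difficulty is anticipated there.
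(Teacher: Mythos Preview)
Your argument is correct, but both halves take a genuinely different route from the paper's.

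For the first assertion, the paper does not use the lattice structure of $\operatorname{Pic}(X')$ at all. Instead it takes the big and nef divisor $B=G+D$ (with $G$ a general fibre), perturbs it to a $\Q$-ample class $L=B-N/k$ via Kodaira's lemma, and then uses the general fact that infinitely many numerically distinct curves must have unbounded degree against an ample class. Unwinding $\Gamma_i\cdot L=1+D\cdot\Gamma_i-\Gamma_i\cdot N/k$ (and noting $\Gamma_i\cdot N\ge 0$ for all but finitely many $i$) gives $D\cdot\Gamma_i\to\infty$. Your approach via the negative definite lattice $K^\perp/\Z K$ and finiteness of vectors of bounded norm is more elementary and more transparent in this specific setting, exploiting exactly the special structure of a rational elliptic surface; the paper's argument is less sharp but would transport to other situations where no such lattice description is available.

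For the second assertion, the paper again avoids del Pezzo theory: it argues directly that any blow-down of $X'$ to a Hirzebruch surface $\F_l$ forces $l\le 2$ (because $X'\to\P^1$ is relatively minimal rational elliptic), then starts the blow-down sequence with $\Gamma_i$ and uses elementary transformations to pass from $\F_0$ or $\F_2$ to $\F_1$ and thence to $\P^2$, noting that the image of $D$ acquires a point of multiplicity at least $D\cdot\Gamma_i$. Your route---contract $\Gamma_i$, verify $-K_{Y_i}$ is nef and big so $Y_i$ is weak del Pezzo of degree $1$, invoke the classification to get $\tau_i\colon Y_i\to\P^2$, and bound the degree via self-intersection---is cleaner and more conceptual; the nefness computation $-K_{Y_i}\cdot\bar C=(F+\Gamma_i)\cdot C\ge 0$ is exactly the right replacement for the paper's ad hoc bound $l\le 2$. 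Both approaches hinge on the same geometric fact (blowing down a section of large height produces a high-multiplicity point on $D$), but package it differently.
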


\begin{proof}
Let us consider the divisor $B=G+D$, where $G$ is a general fibre of $X' \to \P^1$. Thus $B$ is nef and $B^2=1$, so $B$ is big and nef. Therefore there is an effective divisor $N$ and $k>>0$ such that $B-N/k$ is a $\Q$-ample divisor \cite[Lemma 2.60]{KM98}. We consider $L=B-N/k$ for that fixed $N$ and $k$. We note that the infinitely many distinct $(-1)$-curves $\Gamma_i$ are numerically independent, and so $\Gamma_i \cdot L$ is unbounded; c.f. \cite[Cor.1.19(2)]{KM98}. After rearranging the $\Gamma_i$, we may assume that $\lim_{i \to \infty} \Gamma_i \cdot L = \infty$. But a $(-1)$-curve in $X'$ is a section of $X' \to \P^1$, and so $$\Gamma_i \cdot L=1+D \cdot \Gamma_i - N \cdot \Gamma_i/k.$$ For all but finitely many $\Gamma_i$, we have $\Gamma_i \cdot N \geq 0$. Therefore, we can find an infinite sequence of $\Gamma_i$ such that $\Gamma_i \cdot D$ approaches $\infty$.

If $X' \to S=\F_l$ is a blow-down to a Hirzebruch surface, then $l=2,1,0$. This is because $X' \to \P^1$ is a relatively minimal rational elliptic fibration. Let us fix $i$ and consider as first blow-down the one with $\Gamma_i$, and then continue arbitrarily. By an elementary transformation on $\F_2$ or $\F_0$, we can assume $S=\F_1$ and the image of $D$ has a singularity of multiplicity bigger than or equal to $D \cdot \Gamma_i$, and so the same is true with the image of $D$ in the further blow-down to $\P^2$. That composition of blow-downs defines our $\sigma_i \colon X' \to \P^2$, and so the degree of $\sigma_i(D)$ is arbitrarily large.
\end{proof}

To construct an example, we again consider the list in \cite{SU14}. This example has $W$ with $K_W^2=3$, and T-singularity $\frac{1}{100^2}(1,100 \cdot 29-1)$. The continued fraction is $[4,2,6,2,6,2,2,2,4,2,2]$. Consider a rational elliptic fibration $X'$ with sections, which has $I_6$ and six $I_1$ as singular fibres. It has Mordell-Weil group of rank $3$ \cite[p.8]{Per90}. So there are infinitely many sections. We can realize its construction so that the configuration of curves in Figure \ref{f15} exists in a blow-up $X$ of $X'$ eight times. In particular we point out the special $2$-section which is a $(-4)$-curve. One can compute that there are no obstructions for $W$, and that $K_W$ is ample. Then we use Lemma \ref{unbounded} with the section $D$ in Figure \ref{f15}.

\begin{figure}[htbp]
\includegraphics[width=7cm]{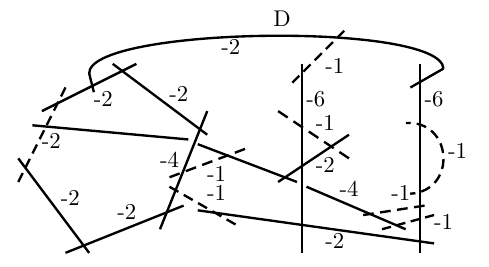}
\caption{One example which produces a situation with $\lambda \to \infty$.}
\label{f15}
\end{figure}

\section{Optimal surfaces} \label{optimal examples}

The following is a classification of the surfaces where equality is attained in Theorem \ref{nef}. In some cases, we obtain realization of these surfaces, and we analyze them in further detail.

\subsection{Case $\kappa(S)=0$} \label{k0}

\begin{theorem}
Assume that $\kappa(S)=0$ and $r-d=4K_W^2$. Then $S$ is one of the following.

\begin{itemize}
\item[(A)] A K3 surface with an elliptic fibration $f \colon S \to \P^1$ so that $\pi(C)$ is two irreducible singular fibres (with a node and a double point) and a section. All other fibres are irreducible. In this case $m=4$, $r=5$, $d=1$, $K_W^2=1$, and the T-chain is $[2,2,6,2,4]$.

\item[(B)] An Enriques surface with an elliptic fibration $f \colon S \to \P^1$ so that $\pi(C)$ is two irreducible multiple nodal fibres and a $(-2)$-curve which is a double section. In this case $m=4$, $r=5$, $d=1$, $K_W^2=1$, and the T-chain is $[2,2,6,2,4]$.

\item[(C)] An Enriques surface with an elliptic fibration $f \colon S \to \P^1$ so that $\pi(C)$ is an $I_{2k}$ double fibre and an irreducible double section with $k$ double points. The T-chain is $[2,\ldots,2,3,2,\ldots,2,2k+3,2k+2]$, and $m=3k+1$, $1 \leq K_W^2=k \leq 4$, $r=4k+1$ and $d=1$.
\end{itemize}
\label{k=0}
\end{theorem}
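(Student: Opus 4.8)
The plan is to first squeeze out of the hypothesis $r-d=4K_W^2$ all the combinatorial invariants attached to the diagram $X\xrightarrow{\pi}S$, $X\xrightarrow{\phi}W$, and only afterwards reconstruct $S$ together with the configuration $\pi(C)$.

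\textbf{Combinatorial rigidity.} Since $\kappa(S)=0$, Proposition \ref{type} forces $S$ to be a K3 or an Enriques surface, so $K_S$ is numerically trivial; hence $K_S^2=0$ and $\lambda=K_S\cdot\pi(C)=0$. Then Theorem \ref{main2} gives $4K_W^2=r-d\le 2K_W^2+\delta$, so $\delta\ge 2K_W^2\ge 2$, which by Notation \ref{numbers} means there is a long diagram of type II with $s=\delta$; and Lemma \ref{nefestimate}(1) gives $4K_W^2=r-d\ge 2s\ge 4K_W^2$. Thus every inequality is an equality: $s=2K_W^2$, $r=2s+d=4K_W^2+d$, and, from $K_S^2-m+r-d+1=K_W^2$, $m=3K_W^2+1$. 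Re-examining the proof of Lemma \ref{nefestimate} and keeping only the positions of the curve $\Gamma$ compatible with $r-d=2s$ pins the continued fraction of the T-singularity down to one of finitely many explicit forms, namely
\[
[2,\ldots,2,s+4,2,\ldots,2,s+2]\qquad\text{or}\qquad[2,\ldots,2,3,2,\ldots,2,s+3,s+2]
\]
(up to the placement of the $(-(s+3))$-curve). Running backwards the two generating operations of \cite[Proposition 3.11]{KSB88} on these families shows each collapses to $[4]$ only when $d=1$; so $d=1$, and with $s=2K_W^2$ these are exactly the chains in (A)/(B) and (C).

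\textbf{The genus-one fibration.} A standard Euler-characteristic computation gives $\chi_{\mathrm{top}}(W)=\chi_{\mathrm{top}}(S)+m-r$; combining this with $12\chi(\O_W)=K_W^2+\chi_{\mathrm{top}}(W)+d-1$ and the values of $m,r,d$ above yields $12\chi(\O_W)=\chi_{\mathrm{top}}(S)$, i.e. $\chi(\O_W)=2$ when $S$ is K3 and $\chi(\O_W)=1$ when $S$ is Enriques. By Proposition \ref{hahah}, $\pi(C)$ is not a tree, hence contains a cycle of curves $Z_0$ with $K_S\cdot Z_0=0$, so $Z_0^2=2p_a(Z_0)-2\ge 0$; one checks $Z_0^2=0$ (a positive value would make $\pi(C)$ big, which is incompatible with the blow-up count $m$), so $Z_0$ is of fibre type and defines a genus-one fibration $f\colon S\to\P^1$, the remaining components of $\pi(C)$ being (multi)sections. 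On an Enriques surface $f$ moreover carries two half-fibres, and it is this feature that distinguishes cases (B)/(C) from the K3 case.

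\textbf{Identifying configurations and the sharp bounds.} One recovers $\pi$ by reading the forced continued fraction: the $m$ blow-ups must resolve the singular points of the components of $\pi(C)$ (plus at most one further infinitely near point), and the resulting self-intersections must reproduce the chain of the first step. Using that every irreducible curve on a K3 or an Enriques surface has self-intersection $\ge -2$, together with the Euler budget $\chi_{\mathrm{top}}(S)\in\{24,12\}$ of $f$, a short case analysis shows: for $S$ a K3 only $s=2$ survives (so $K_W^2=1$, $m=4$, $r=5$, chain $[2,2,6,2,4]$), with $\pi(C)$ a nodal fibre, a fibre with one further double point, and a section, all other fibres irreducible — case (A); for $S$ Enriques the half-fibres let $\pi(C)$ involve a multiple fibre, and a refined version of the same count yields exactly $1\le K_W^2\le 4$ — the first chain with $s=2$ giving (B) (two multiple nodal fibres and a $(-2)$-curve double section) and the second chain with $s=2k$ giving (C) (an $I_{2k}$ half-fibre and an irreducible double section with $k$ double points), the bound $k\le 4$ being forced by the Euler number the configuration already consumes.

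\textbf{Main obstacle.} The first step is bookkeeping once the proof of Lemma \ref{nefestimate} is at hand; the genuine work is the last step — proving that each admissible continued fraction is really produced by blow-ups of a genus-one fibred K3 or Enriques surface in precisely the stated way, and thereby extracting the sharp values $K_W^2=1$ in the K3 case and $K_W^2=k\le 4$ in the Enriques case. This amounts to carefully tracking how the $m=3K_W^2+1$ blow-ups act on the self-intersections and multiplicities of the components of $\pi(C)$, under the twin constraints ``self-intersection $\ge -2$'' and the Euler-number budget of $f$.
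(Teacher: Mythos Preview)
Your overall architecture matches the paper's: both start by observing that $\lambda=0$ and that equality in Theorem~\ref{main2} together with Lemma~\ref{nefestimate} forces a type-II long diagram with $r-d=2s$, then read the two equality chains $[2,\ldots,2,s+4,2,\ldots,2,s+2]$ and $[2,\ldots,2,3,2,\ldots,2,s+3,s+2]$ off the proof of Lemma~\ref{nefestimate}. Your deductions $d=1$, $s=2K_W^2$, $m=3K_W^2+1$ are correct.

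Two points, one structural and one a genuine gap.

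\emph{Structural.} You produce the elliptic fibration by locating a cycle $Z_0\subset\pi(C)$ with $Z_0^2=0$. The paper instead contracts the long diagram first: blowing down $F$ and the $s$ ending $(-2)$-curves lands on a surface $S'$ in which the image of $\Gamma$ is either a nodal rational $0$-curve (chain I) or a cycle of $s$ $(-2)$-curves (chain II), and a multiple of that visibly defines the fibration. This shortcut also tells you exactly what the remaining $K_W^2$ blow-downs from $S'$ to $S$ can touch (only the $(-s-2)$-curve, since a $(-1)$-curve meeting any other component would force a curve with $K_S\cdot{}<0$), and it makes the exclusion of K3 in chain II transparent: the $(-s-2)$-curve meets the $I_s$ cycle once, so in multiplicity~$1$ it is a section, hence a smooth $(-2)$-curve on $S$; but every remaining $(-1)$-curve meets it at least twice by ampleness, so its image is singular --- contradiction. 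Your detour through $Z_0$ and $\chi(\O_W)$ is correct but unnecessary, and the claim ``$Z_0^2>0$ is incompatible with the blow-up count $m$'' is not justified.

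\emph{Gap.} Your proposed mechanism for $k\le 4$ in case (C) --- ``the Euler number the configuration already consumes'' --- does not work. An $I_{2k}$ fibre has $\chi_{\mathrm{top}}=2k$ and $\chi_{\mathrm{top}}(S)=12$, so this gives only $k\le 6$, and even accounting for the second half-fibre does not get you below $5$. The actual constraint is Picard rank: $\mathrm{Num}$ of an Enriques surface has rank $10$, and an $I_l$ fibre together with the fibre class and a multisection contributes $l+1$ independent classes, so $l\le 9$; with $l=2k$ this is exactly $k\le 4$. The paper invokes this as the known fact that Enriques surfaces admit only $I_l$ fibres with $1\le l\le 9$.

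Finally, your dichotomy ``K3 $\Rightarrow$ (A), Enriques $\Rightarrow$ (B)/(C)'' is not quite right: the split is by chain type, not surface type. Chain~I with $s=2$ gives (A) on K3 and (B) on Enriques (both occur); chain~II gives only (C), with K3 excluded by the section argument above.
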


\begin{proof}
By Proposition \ref{type}, we know that $S$ is either a K3 surface or an Enriques surface. Also $\lambda=K_S \cdot \pi(C)=0$. By Theorem \ref{main2} and Lemma \ref{nefestimate}, we have that $r-d=4 K_W^2$ is attained when $2s=r-d$. According to the proof of Lemma \ref{nefestimate}, we must analyze two cases:

\textbf{(I)} The T-chain $C$ has continued fraction $[2,\ldots,2,s+4,2,\ldots,2,s+2]$, and there is a $(-1)$-curve $F$ intersecting the ending $(-2)$-curve and the $(-s-4)$-curve. After contracting $F$ and all the $s$ $(-2)$-curves at the end of $C$, we obtain a surface $S'$ with a self-intersection $0$ nodal rational curve together with a chain of $(s-1)$ $(-2)$-curves, and the $(-s-2)$-curve at the end. The blow-downs after that can only affect the $(-s-2)$-curve, since we cannot have a $(-1)$-curve touching the nodal or any $(-2)$-curve; otherwise $K_S$ would not be nef. In $S$ a multiple of the nodal curve is a fibre for some elliptic fibration $f \colon S \to \P^1$ c.f. \cite[VIII.17]{BHPV04}. Therefore, since $K_S\cdot \pi(C)=0$, we see that $s$ can only be $2$ or $1$. If $s=1$, then the $(-s-2)$-curve is a $(-3)$-curve. But then the image in $S$ would be $K_S$ nonzero because any $(-1)$-curve intersecting it would intersect it at least twice, since $K_W$ is ample. Thus $s=2$, and the T-chain must be $[2,2,6,2,4]$. We have either a section if $S$ is K3 or a double section if $S$ is Enriques, corresponding to the remaining $(-2)$-curve on $S$. The $(-s-2)$-curve must become a fibre, and the only possibility is to have a double point on that fibre. We have cases (A) and (B).

\textbf{(II)} The T-chain $C$ has continued fraction $[2,\ldots,2,3,2,\ldots,2,s+3,s+2]$, and there is a $(-1)$-curve $F$ intersecting the ending $(-2)$-curve and the $(-s-3)$-curve. After contracting $F$ and all the $s$ $(-2)$-curves at the end of $C$, we obtain a surface $S'$ with a cycle of $s$ $(-2)$-curves. Thus some multiple of it defines an elliptic fibration $f \colon S' \to \P^1$, and the cycle is an $I_s$ fibre. The multiplicity of $I_s$ as fibre can be either $1$ (K3) or $2$ (Enriques). Any additional $(-1)$-curve must intersect the $(-s-2)$-curve at least twice, which becomes singular, and so $I_s$ cannot have multiplicity $1$. Therefore it has multiplicity $2$, and the image of the $(-s-2)$-curve is a nodal rational curve with $k$ double points, where $s=2k$ because the intersection with canonical class is zero. We have case (C). On the other hand, in this case a quick calculation as in \cite{LP07} shows that $K_W$ is ample. Notice that $1 \leq k \leq 4$ since Enriques surfaces can have $I_l$ fibres with $1 \leq l \leq 9$ only.
\end{proof}

We can realize the three cases. First, we recall the construction of Enriques surfaces from \cite[V.23]{BHPV04}. Consider $\P^1_{x:y}\times\P_{z:w}^1$ together with the involution $i(x:y,z:w)=(x:-y,z:-w)$. Let $D_1$ and $D_2$ be intersecting fibers, both invariant under the involution $i$. Choose $p_1 \in D_1$ and $p_2 \in D_2$, neither of which is fixed by $i$, and consider a curve $B$ of bidegree $(4,4)$ which is also invariant under $i$, tangent to $D_1$ at $p_1$, and with a node at $p_2$. Notice that by choice of $B$, $D_1$, $D_2$ and the points $p_1$ and $p_2$, the curve $B$ is necessarily tangent to $D_1$ at $i(p_1)$ and has a node at $i(p_2)$.

We blowup $\P^1 \times \P^1$ at the nodes of $B$ (let $D_3$ and $D_3'$ be the exceptional curves). Let $f_1 \colon \bar S \to \P$ be the double cover of the resulting surface $\P$, branched over the proper transform of $B$. Then $\bar S$ is a $K3$ surface containing six rational $(-2)$-curves $D_1, D_1', D_2, D_2', D_3, D_3'$, the preimages of the corresponding curves on $\P$. Moreover, as described in \cite[V.23]{BHPV04}, the involution $i$ lifts to a fixed-point-free involution $j$ on $\bar S$ with $j(D_k)=D_k'$ for $k\in\{1,2,3\}$. Letting $f_2 \colon \bar S \to S$ be the corresponding unramified double cover, we obtain an Enriques surface $S$ containing three curves $D_1$, $D_2$, and $D_3$ (the images of the corresponding curves on $\P_1$). Here, $D_1$ is a nodal rational curve with $D_1^2=0$ which intersects $D_2$ in a point. The curves $D_2$ and $D_3$ intersect in two points and are $(-2)$-curves.

The space of automorphisms of $\P^1 \times\P^1$ which send the space of invariant $(4,4)$-forms to itself is $2$-dimensional. Thus, following an argument analogous to that of \cite[Lemma 3.5]{R14}, one can show that the space of such $B$ is $7$-dimensional.

We can also add the constraint on $B$ to have the intersection pattern with $D_1$ on the other $i$ invariant fibre parallel to $D_1$. This produces an Enriques surface with an elliptic fibration with two nodal multiple fibres, and a $(-2)$-curve as double section.

Finally we note that the quotient map $f_2 \colon \bar S \to S$ is defined by $2 K_S \sim 0$, and so we have {\small \begin{equation} \label{eq}
(f_2)_*\big(T_{\bar S}\big(-\log\big(\sum D_i+D_i'\big)\big)\big)=T_S\big(-\log\big(\sum D_i\big)\big)\oplus T_S\big(-\log\big(\sum D_i\big)\big)\big(-K_S \big).
\end{equation}}

We now go case by case showing existence, and computing local-to-global obstructions on $W$.


\textbf{(A)} Let us consider a K3 surface $S$ with a chain of curves formed by: nodal $0$-curve $\Gamma_1$, $(-2)$-curve $\Gamma_2$, nodal $0$-curve $\Gamma_3$. The two $0$-curves are fibres of an elliptic fibration with only irreducible fibres, and the $(-2)$-curve is a section. We can produce such an example via base change of order two from a rational elliptic fibration with only irreducible fibres, and with sections. After $m=4$ blow-ups over $S$, we obtain the Wahl chain $[2,2,6,2,4]$, and after contracting this chain we obtain $W$ with ample canonical class (we use that all fibres of $S \to \P^1$ are irreducible), and $K_W^2=1$. As in \cite{LP07}, the local-to-global obstructions lie in $H^2(S,T_S(-\log(\Gamma_1 + \Gamma_2 + \Gamma_3)))$. This cohomology space is isomorphic to $H^0(S,\Omega_S^1(\log(\Gamma_1 + \Gamma_2 + \Gamma_3)))$ by Serre duality. By the residue sequence, we have that $H^0(S,\Omega_S^1(\log(\Gamma_1 + \Gamma_2 + \Gamma_3))) \neq 0$ because $\Gamma_1$ and $\Gamma_3$ are linearly equivalent. Thus we do not know if $W$ has $\Q$-Gorenstein smoothings.

\textbf{(B)} Let us consider an Enriques surface $S$ with a chain of curves formed by: nodal $0$-curve $\Gamma_1$, $(-2)$-curve $\Gamma_2$, nodal $0$-curve $\Gamma_3$. The two $0$-curves are the two multiple fibres of an elliptic fibration with only irreducible fibres, and the $(-2)$-curve is a double section. Enriques surfaces like this exist by the construction above.

Let $f_2 \colon \bar S \to S$ be the double cover defined by $2 K_S \sim 0$. Then the preimages of $\Gamma_1$ and $\Gamma_3$ are $I_2$ fibres in an elliptic fibration on the K3 surface, and the pre-image of $\Gamma_2$ consists of two disjoint sections. By Equation (\ref{eq}), we have that $H^2(\bar S, T_{\bar S}(-\log(\sum_{i=1}^3 \Gamma_i+\Gamma_i')))$ is equal to $$H^2\Big(S, T_{S}\Big(-\log\Big(\sum_{i=1}^3 \Gamma_i\Big)\Big)\Big) \oplus H^2\Big(S, T_{S}\Big(-\log\Big(\sum_{i=1}^3 \Gamma_i\Big)\Big) \otimes \O_S\Big(-K_S\Big)\Big).$$

By Serre duality and the residue sequence, we have $$h^2\Big(\bar S, T_{\bar S}\Big(-\log\Big(\sum_{i=1}^3 \Gamma_i+\Gamma_i'\Big)\Big)\Big)=h^0\Big(\bar S,\Omega_{\bar S}^1\Big(\log\Big(\sum_{i=1}^3 \Gamma_i+\Gamma_i'\Big)\Big)\Big)=1,$$ because $\Gamma_1, \Gamma_2, \Gamma_2', \Gamma_3, \Gamma_3'$ are numerically independent but $\Gamma_1$, $\Gamma_1'$, $\Gamma_2$, $\Gamma_2'$, $\Gamma_3$, $\Gamma_3'$ are not. We also have by Serre duality and the residue sequence again that $h^2(S, T_{S}(-\log(\sum_{i=1}^3 \Gamma_i)) \otimes \O_S(-K_S))=h^0(S,\Omega_S^1(\log(\sum_{i=1}^3 \Gamma_i)))=1$, because $\Gamma_1, \Gamma_3$ are not numerically independent but $\Gamma_1, \Gamma_2$ are. Therefore $H^2(S, T_{S}(-\log(\sum_{i=1}^3 \Gamma_i)))=0$.

After $m=4$ blow-ups over $S$, we obtain the Wahl chain $[2,2,6,2,4]$, and after contracting this chain we obtain $W$ with $K_W^2=1$. Since there are no local-to-global obstructions to deform $W$, we can assume that $K_W$ is ample by smoothing possible $(-2)$-curves from the fibres of $S \to \P^1$. Thus via $\Q$-Gorenstein smoothings on $W$ we obtain Godeaux surfaces with fundamental group $\Z/2$ (using Lee-Park's method \cite{LP07}).


\textbf{(C)} Let us consider an Enriques surface $S$ with nodal $0$-curve $D_1$, a $(-2)$-curve $D_2$ intersecting $D_1$ at one point, and a $(-2)$-curve $D_3$ intersecting $D_2$ transversally at two points and disjoint from $D_1$. We constructed such Enriques surfaces above, with the same notation.

As before, let $f_2 \colon \bar S \to S$ be the double cover defined by $2 K_S \sim 0$. By Equation (\ref{eq}), we have that $H^2(\bar S, T_{\bar S}(-\log(\sum_{i=1}^3 D_i+D_i')))$ is equal to $$H^2\Big(S, T_{S}\Big(-\log\Big(\sum_{i=1}^3 D_i\Big)\Big)\Big) \oplus H^2\Big(S, T_{S}\Big(-\log\Big(\sum_{i=1}^3 D_i\Big)\Big) \otimes \O_S\Big(-K_S\Big)\Big).$$ Since the curves $D_1$, $D_1'$, $D_2$, $D_2'$, $D_3$, $D_3'$ are numerically independent in $\bar S$, the Chern map in the long exact sequence of the residue sequence is injective. Thus $H^2(\bar S, T_{\bar S}(-\log(\sum_{i=1}^3 D_i+D_i')))=0$, and so $$H^2(S, T_{S}(-\log(D_1+D_2+D_3)))=0.$$ There are no local-to-global obstructions to deform $W$.

After $m=4$ blow-ups over $S$, we obtain the Wahl chain $[2,2,3,5,4]$, and after contracting this chain we obtain $W$ with $K_W^2=1$. We again can assume $K_W$ ample because we have no obstructions to deform $W$, and so we can get rid of potential $(-2)$-curves in the fibres of $S \to \P^1$. Then a $\Q$-Gorenstein smoothing of $W$ is a Godeaux surface with fundamental group isomorphic to $\Z/2$. The surfaces $W$ describe a divisor in the moduli space of those surfaces, which matches the parameters of $B$. Here we have only considered the case $k=1$; we do not know of examples for $k>1$.

\subsection{Case $\kappa(S)=1$} \label{k1}

\begin{theorem}
Assume that $\kappa(S)=1$ and $r-d=4K_W^2-2$. Then $S$ is one of the following.
\begin{itemize}

\item[(A1)] $p_g=2$, $q=0$, and $S$ has an elliptic fibration where $\pi(C)$ is a chain consisting of an $I_1$ fibre and a $(-3)$-curve which is a section. All other fibres are irreducible. In this case $m=2$, $K_W^2=1$, and the T-chain is $[2,5,3]$.

\item[(A2)] $p_g=1$, $q=0$, and $S$ has an elliptic fibration where $\pi(C)$ is a chain consisting of an $I_1$ fibre with multiplicity $2$, and a $(-3)$-curve which is a bisection. There are no more multiple fibres, and all other fibres are irreducible. In this case $m=2$, $K_W^2=1$, and the T-chain is $[2,5,3]$.

\item[(A3)] $p_g=0$, $q=0$, and $S$ has an elliptic fibration where $\pi(C)$ is a chain consisting of an $I_1$ fibre with multiplicity $2$, and a $(-3)$-curve which is a bisection. There are two additional multiplicity $2$ fibres, and all other fibres are irreducible. In this case, $m=2$, $K_W^2=1$, and the T-chain is $[2,5,3]$.

\item[(A4)] $p_g=0$, $q=0$, and $S$ has an elliptic fibration where $\pi(C)$ is a  chain consisting of an $I_1$ fibre with multiplicity $3$, and a $(-3)$-curve which is a 3-section. There is one additional fibre with multiplicity $3$, and all other fibres are irreducible. In this case, $m=2$, $K_W^2=1$, and the T-chain is $[2,5,3]$.

\item[(A5)] $p_g=0$, $q=0$, and $S$ has an elliptic fibration where $\pi(C)$ is a  chain consisting of an $I_1$ fibre with multiplicity $4$, and a $(-3)$-curve which is a 4-section. There is one additional fibre with multiplicity $2$, and all other fibres are irreducible. In this case, $m=2$, $K_W^2=1$, and the T-chain is $[2,5,3]$.

\item[(B)] $p_g=1$, $q=0$, and $S$ has an elliptic fibration with one double fibre, where $\pi(C)$ is an $I_{2k+1}$ double fibre together with a double section, which is a rational curve with $k \geq 1$ double points. In this case $m=3k+2$, $r=4k+3$, $d=1$, $K_W^2=k+1$, and the T-chain is $[2,\ldots,2,3,2,\ldots,2,2k+4,2k+3]$.

\item[(C)] $p_g=0$, $q=0$, and $S$ has an elliptic fibration with three double fibres, where $\pi(C)$ is an $I_{2k+1}$ double fibre together with a double section which is a rational curve with $k \geq 1$ double points. In this case $m=3k+2$, $r=4k+3$, $d=1$, $K_W^2=k+1$, and the T-chain is $[2,\ldots,2,3,2,\ldots,2,2k+4,2k+3]$.

\item[(D)] $p_g=0$, $q=0$, and $S$ has an elliptic fibration with two triple fibers, where $\pi(C)$ is an $I_{s}$ triple fibre together with a triple section which is a rational curve with $k_2$ double points and $k_3$ triple points. In this case $s=2k_2+3k_3+1 \geq 2$, $m=3k_2+4k_3+2$, $r=2s+1$, $d=1$, $K_W^2=k_2+2k_3+1$, and the T-chain is $[2,\ldots,2,3,2,\ldots,2,s+3,s+2]$.

\item[(E)] $p_g=0$, $q=0$, and $S$ has an elliptic fibration with two multiple fibers of multiplicities $2$ and $4$, where $\pi(C)$ is an $I_{s}$ $4$-fibre together with a $4$-section which is a rational curve with $k_2$ double points, $k_3$ triple points, and $k_4$ $4$-tuple points. In this case $s=2k_2+3k_3+4k_4+1 \geq 2$, $m=3k_2+4k_3+5k_4+2$, $r=2s+1$, $d=1$, $K_W^2=1+k_2+2k_3+3k_4$, and the T-chain is $[2,\ldots,2,3,2,\ldots,2,s+3,s+2]$.

\end{itemize}
\label{k=1}

\end{theorem}

\begin{proof}
By Proposition \ref{type}, we know that $S$ has an elliptic fibration $S \to \P^1$. By Theorem \ref{main2} and Lemma \ref{nefestimate}, we have that $r-d=4 K_W^2-2$ is attained when $2s=r-d$ and $\lambda=K_S \cdot \pi(C) = 1$. According to the proof of Lemma \ref{nefestimate}, we must analyze two cases:

\textbf{(I)} The T-chain $C$ has continued fraction $[2,\ldots,2,s+4,2,\ldots,2,s+2]$, and there is a $(-1)$-curve $F$ intersecting the ending $(-2)$-curve and the $(-s-4)$-curve. After contracting $F$ and all the $s$ $(-2)$-curves at the end of $C$, we obtain a surface $S'$ with a self-intersection $0$ nodal rational curve together with a chain of $s-1$ $(-2)$-curves, and a $(-s-2)$-curve at the end. The blow-downs after that can only affect the $(-s-2)$-curve, since we cannot have a $(-1)$-curve touching the nodal curve or any $(-2)$-curve; otherwise $K_S$ would not be nef. We note also that the nodal $0$-curve is a fibre, possibly multiple. The $(-s-2)$-curve must become part of a fibre in $S \to \P^1$, if $s \geq 2$. 
That gives a $(-2)$-curve which is a multiple section, which is not possible because $\kappa(S)=1$. So $s=1$, and we obtain a $(-3)$-curve which is a multiple section. If it is a section, then, by the canonical formula for $K_S$, we get that $p_g(S)=2$, since $q(S)=0$. This is case (A1), all fibres are irreducible to ensure $K_W$ ample. If the $(-3)$-curve is not a section, then the nodal $0$-curve is a multiple fibre. By the canonical formula we must have $p_g(S)\leq 1$. If $p_g=1$, 
then the nodal curve has multiplicity $2$ and there are no more multiple fibres, this gives option (A2). If $p_g=0$, then the nodal curve could have multiplicity $2$ (option (A3)), $3$ (option (A4)), or $4$ (option (A5)). The fact that the remaining multiple fibres in each case are as described arises automatically by intersecting $K_S$ with the $(-3)$-curve, using the canonical bundle formula for $K_S$.

\textbf{(II)} The T-chain $C$ has continued fraction $[2,\ldots,2,3,2,\ldots,2,s+3,s+2]$, and there is a $(-1)$-curve $F$ intersecting the ending $(-2)$-curve and the $(-s-3)$-curve. Here $s \geq 2$. After contracting $F$ and all the $s$ $(-2)$-curves at the end of $C$, we obtain a surface $S'$ with a cycle of $s$ $(-2)$-curves. Thus some multiple bigger or equal to $1$ of it defines an elliptic fibration $S' \to \P^1$, and the cycle is an $I_s$ fibre. The blow-down to $S$ affects only the $(-s-2)$-curve. In $S$ the canonical class is $ K_S \equiv (p_g(S)-1) G + \sum_{i=1}^u (m_i-1)F_i$ where the $F_i$ correspond to multiple fibres, and $G$ is a general fibre. The image of the $(-s-2)$-curve in $S$ is $\pi(C_r)$, and so $K_S \cdot \pi(C_r)=\lambda=1$. We have the numerical relation $$1=K_S \cdot \pi(C_r)=(p_g(S)-1) \,G \cdot \pi(C_r) + \sum_{i=1}^u \frac{m_i-1}{m_i} \, G \cdot \pi(C_r),$$ and so we analyze the following cases:

\textbf{(IIa)} $p_g(S) \geq 2$. Then we get that $\pi(C_r)$ must be a section, and that implies $s=1$. But that is a contradiction.

\textbf{(IIb)} $p_g(S)=1$. Then $u=1$, $m_1=2$, $K_S \sim G/2$, and so $\pi(C_r)$ is a double section, and the blow-downs can only produce $k$ double points where $s=2k+1$, where $k \geq 1$. Thus $m=3k+2$, $r=4k+3$, $d=1$, $K_W^2=k$, and the T-chain is $[2,\ldots,2,3,2,\ldots,2,2k+4,2k+3]$. We are in (B).

\textbf{(IIc)} $p_g(S)=0$. Then by just using the canonical bundle formula above, we get three possible situations: the surface $S$ has an elliptic fibration with

$\centerdot$ three multiplicity $2$ fibres (one of them is $I_s$) and $\pi(C_r)$ is a double section with $k$ double points, where $s=2k+1$. In this case $m=3k+2$, $r=4k+3$, $d=1$, $K_W^2=k+1$, and the T-chain is $[2,\ldots,2,3,2,\ldots,2,2k+4,2k+3]$. This is option (C).

$\centerdot$ two multiplicity $3$ fibers (one of them is $I_s$) and $\pi(C_r)$ is a triple section with $k_1$ double points and $k_2$ triple points, where $s=1+2 k_1+3k_2$. In this case $2k_1+3k_2 \geq 1$, $m=3k_1+4k_2+2$, $r=2s+1$, $d=1$, $K_W^2=k_1+2k_2+1$, and the T-chain is $[2,\ldots,2,3,2,\ldots,2,s+3,s+2]$. This is option (D).

$\centerdot$ two multiplicity $2$ and $4$ fibers, $I_s$ is 4-fibre, and $\pi(C_r)$ is a 4-section with $k_2$ double points, $k_3$ triple points, and $k_4$ 4-tuple points,  where $s=2k_2+3k_3+4k_4+1$. In this case $2k_2+3k_3+4k_4 \geq 1$, $m=3k_2+4k_3+5k_4+2$, $r=2s+1$, $d=1$, $K_W^2=k_2+2k_3+3k_4+1$, and the T-chain is $[2,\ldots,2,3,2,\ldots,2,s+3,s+2]$. This is option (E).

\end{proof}


We give an example showing that case \textbf{(A1)} of Theorem \ref{k=1} is realizable.

Let us consider a relatively minimal rational elliptic fibration $S' \to \P^1$ with at least one nodal $I_1$ fibre, and a section. Let us take two general points in $\P^1$, and make the base change of degree $3$ branched at those points. This is equivalent to consider the $3$-cyclic cover $S \to S'$ which is branched at the two fibers corresponding to the chosen two general points in $\P^1$. Then the pull-back of a $(-1)$-curve is a $(-3)$-curve $A$, which is a section again. Notice that the pull-back of an $I_1$ is three $I_1$'s. Consider one of them, denote it by $B$. We have the induced pull-back elliptic fibration $S \to \P^1$, and $K_S \sim G$ where $G$ is a general fibre. One computes $q(S)=0$, $p_g(S)=2$, and so the Kodaira dimension of $S$ is $1$. We now blow up twice over the node of $B$, to obtain a $(-2)$-curve $C$. The configuration $A-B-C$ is $[3,5,2]$. The canonical class of $W$, the contraction of $[3,5,2]$ is ample by straightforward computation assuming that $S' \to P^1$ has only irreducible fibres. Also, $r=3$ and $K_W^2=-2+3=1$. The local-to-global obstruction of $W$ is encoded in $$H^0\big(S,\Omega_S^1(\log (B+A)) \otimes \O_{S}(K_S)\big).$$ We will show that this is not zero, and so we have obstructions and, a priori, we do not know is there is a $\Q$-Gorenstein smoothing of $W$. Notice that $$ \Omega_S^1(\log(B+A+G)) \subseteq  \Omega_S^1(\log(B+A)) \otimes  \O_{S}(K_S)$$ since $K_S \sim G$. But we can now use the residue exact sequence for $B$, $G$, and $A$ and the fact that $B$ and $G$ are linearly equivalent, to say that $h^0(S,\Omega_S^1(\log(B+A+G)))=1$.

There is a recent study of stable surfaces for these invariants in \cite{FPR17}, and this example seems to be new. We do not know if options (B), (C), (D), and (E) are realizable.





\subsection{Case $\kappa(S)=2$} \label{k2}

\begin{theorem}
Assume that $\kappa(S)=2$ and $r-d=4(K_W^2-K_S^2)-4$ if $K_W^2-K_S^2>1$, or $r-d=1$ otherwise. Then
\begin{itemize}

\item[(A)] $K_W^2-K_S^2=1$, and $\pi(C)$ is a chain formed by a rational curve $\Gamma$ with one double point and $\Gamma^2=-1$ together with a $(-2)$-curve $\Gamma_1$. We have $m=1$, and the T-chain is $[2,5]$.

\item[(B)] $K_W^2-K_S^2=1$, and $\pi(C)$ is a chain of $(-2)$-curves $\Gamma_1, \ldots, \Gamma_d$ together with a $(-3)$-curve $\Gamma$ such that $\Gamma \cdot \Gamma_i=0$ for $i \neq 2,d$, and $\Gamma \cdot \Gamma_2=\Gamma \cdot \Gamma_d=1$. We have $m=1$ and $d \geq 1$.

\item[(C)] $K_W^2-K_S^2=2$, and $\pi(C)$ is a nodal rational curve $\Gamma$ with $\Gamma^2=-1$ together with a chain of three $(-2)$-curves $\Gamma_1$, $\Gamma_2$, $\Gamma_3$ with $\Gamma \cdot \Gamma_1=1$, $\Gamma \cdot \Gamma_2=0$ and $\Gamma \cdot \Gamma_3=1$. We have $m=3$, and the T-chain is $[2,7,2,2,3]$.

\item[(D)] $K_W^2-K_S^2=2$, and $\pi(C)$ is a collection of four smooth rational curves $\Gamma_1, \Gamma_2, \Gamma_3, \Gamma_4$ where $\Gamma_i^2=-2$ for $i=1,2,4$, $\Gamma_3^2=-3$, $\Gamma_1 \cdot \Gamma_2=1$, $\Gamma_1 \cdot \Gamma_3=1$, $\Gamma_1 \cdot \Gamma_4=0$, $\Gamma_2 \cdot \Gamma_3=1$, $\Gamma_2 \cdot \Gamma_4=0$, and $\Gamma_3 \cdot \Gamma_4=2$ at two distinct points. We have $m=3$, and the T-chain is $[2,3,2,6,3]$.

\end{itemize}
\label{k=2}

\end{theorem}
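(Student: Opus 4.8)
The plan is to follow the pattern of the proofs of Theorems \ref{k=0} and \ref{k=1}, now under the standing hypotheses that $S$ is of general type (so $K_S$ is nef) and $\lambda=K_S\cdot\pi(C)\geq 1$ (Proposition \ref{hahah}). Write $e:=K_W^2-K_S^2$. First I would record the configuration forced by equality. If $e>1$, then combining Theorem \ref{main2} — which for a long diagram of type II reads $r-d\leq 2e+s-\lambda$ — with Lemma \ref{nefestimate}(2) and $\lambda\geq 1$, the equality $r-d=4e-4$ forces: there is a long diagram of type II, $\lambda=1$, $s=2e-3$, $r-d=2s+2$, and we are in the branch ``$r-d\geq 2s+2$'' of Lemma \ref{nefestimate}(2) (the branch giving only $\lambda\geq 2$ is incompatible with $\lambda=1$). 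One also checks that equality is attained by no long diagram (Corollary \ref{nolong} gives only $r-d\leq 2e-1$) and, for $e\geq 3$, by no long diagram of type I (Corollary \ref{typeI} gives $r-d\leq 2e<4e-4$); for $e=2$ a short argument along the lines of the $e=1$ part of the proof of Theorem \ref{nef} excludes a type I long diagram attaining equality (the chain is forced into the impossible shape $[2,3,2,\dots,4]$). If $e=1$, then by the proof of Theorem \ref{nef} equality means $r-d=1$; then $r-d+1-m=e$ gives $m=1$, Corollary \ref{weekineq} with $\lambda\geq 1$ gives $\lambda=1$, and Lemma \ref{int} gives $E_1\cdot C=2$.

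I would treat $e=1$ first. Here $\pi$ is a single blow-up and $E_1=F$ is a $(-1)$-curve with $F\cdot C=2$. Since $S$ carries no $(-1)$-curve, contracting $F$ cannot turn any $C_i$ into a $(-1)$-curve; hence either $F$ meets one $C_k$ with $C_k^2\leq -3$ at two points (so $\pi(C_k)$ is nodal with $\pi(C_k)^2=C_k^2+4$), or $F$ meets two curves $C_j,C_k$, each of self-intersection $\leq -3$, transversally at one point each. A T-chain of length $d+1$ has only a short list of curves of self-intersection $\leq -3$, and the possibilities are narrowed by: $\pi(C)$ is not a tree (Proposition \ref{hahah}, forcing two distinct $C_j,C_k$ to be non-adjacent in $C$ so that blowing down $F$ creates a cycle); $K_W$ ample forbids $F$ meeting both ends of $C$ (Remark \ref{ends}); and $S$ is of general type. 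This bookkeeping leaves exactly $C=[2,5]$ (with $F$ meeting the $(-5)$-curve twice) and $C=[2,3,2,\dots,2,4]$ of length $d+1$ (with $F$ meeting the $(-3)$- and $(-4)$-curves once each). Reading off $\pi(C)$ in $S$ from the self-intersection count gives cases (A) and (B), and $\kappa(S)=2$ then holds because $S$ already carries the listed configuration and is not excluded by Proposition \ref{type}.

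For $e\geq 2$ we may assume a long diagram of type II, with $\lambda=1$, $s=2e-3$, $r-d=2s+2$. I would re-run, in this regime, the three-case analysis of Lemma \ref{nefestimate}(2) by the position of the $\bullet$-curve $\Gamma$ meeting the distinguished $(-1)$-curve $F$: $\Gamma$ at an end of $C$ (impossible by Remark \ref{ends}), $\Gamma$ meeting a $\Box$, and $\Gamma$ adjacent to two $\bullet$'s. Since we insist on $\lambda=1$ and $r-d=2s+2$ exactly, the borderline subconfigurations that forced $\lambda\geq 2$ in Lemma \ref{nefestimate} (Figures \ref{f12}--\ref{f14}) are excluded; this pins $\alpha=-\Gamma^2\in\{s+5,s+6\}$ with at most one extra $x_i$ equal to $3$. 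Contracting $F$ together with the $s$ ending $(-2)$-curves $C_1,\dots,C_s$ and completing $\pi$ down to $S$ — subject to $K_S$ staying nef, $S$ being of general type, and no $(-1)$-curve produced meeting the image of $C$ too little (which would contradict $K_W$ ample), using the elliptic and genus-$0$ fibrations obtained after such contractions exactly as in Lemma \ref{nefestimate} and Theorem \ref{nonef} — should force $s=1$; hence $e=2$, $r-d=4$, $m=3$. A direct inspection of the finitely many admissible T-chains with a single ending $(-2)$-curve then leaves exactly $C=[2,7,2,2,3]$ and $C=[2,3,2,6,3]$, which upon reading off $\pi(C)$ in $S$ give cases (C) and (D), with $\kappa(S)=2$ again automatic.

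The main obstacle is the $e\geq 2$ step: ruling out $s\geq 2$ is an upgrade of the borderline analysis of Lemma \ref{nefestimate}, now carried out while keeping $\lambda$ pinned at $1$ and squeezing out enough of the global geometry of $S$; this is where the fibration tricks of Theorem \ref{nonef} get reused. The $e=1$ step is routine combinatorics on T-chains. Realizability of the four configurations — exhibiting an explicit surface of general type carrying the prescribed curves, checking that $K_W$ is ample after contraction, and computing $p_g$, $q$ and $K_W^2$ — is then done case by case, as in Subsections \ref{k0} and \ref{k1}.
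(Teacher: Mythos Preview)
Your overall plan matches the paper's: split into $e=1$ and $e>1$, handle $e=1$ by direct combinatorics on the two T-chains $[2,5]$ and $[2,3,2,\dots,2,4]$ of length $d+1$, and for $e>1$ reduce to a long diagram of type~II with $\lambda=1$ and $r-d=2s+2$. Your $e=1$ analysis is essentially the paper's, and your observation that for $e=2$ one must separately exclude a type~I long diagram attaining $r-d=4$ is a point the paper absorbs into the single sentence ``we must have a long diagram of type~II''.

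Where your proposal diverges is in the mechanism for forcing $s=1$ when $e>1$. You propose to reuse the elliptic and genus-$0$ fibration tricks of Theorem~\ref{nonef}, but those do not apply here: after contracting $F,C_1,\dots,C_s$ the relevant image curves have negative self-intersection (a nodal $(-1)$- or $(-2)$-curve, a smooth $(-4)$-curve, and a $(-s-2)$-curve) and do not span pencils. The paper's argument is instead purely numerical from $\lambda=1$. It first writes out the three T-chain shapes compatible with $r-d=2s+2$ and $\lambda=1$, namely
\[
[2,\dots,2,s{+}5,2,\dots,2,3,2,\dots,2,s{+}2],\quad
[2,\dots,2,s{+}6,2,\dots,2,s{+}2],\quad
[2,\dots,2,3,2,\dots,2,s{+}5,s{+}2],
\]
which is exactly your ``$\alpha\in\{s+5,s+6\}$ with at most one extra $x_i=3$'' made explicit. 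Then, after the $s+1$ blow-downs, it uses $K_S\cdot\pi(C)=1$ directly: every irreducible component of $\pi(C)$ that is not a smooth $(-2)$-curve contributes positively to $\lambda$, so all but one must end up as smooth $(-2)$-curves in $S$. For the first shape this forces a cycle of $(-2)$-curves on a surface of general type, which is impossible; for the second and third it forces the $(-s-2)$-curve to become a $(-2)$-curve via further $(-1)$-curves that must also touch the nodal component, and this is only consistent with $s=1$, yielding (C) and (D). So the idea to substitute for the fibration tricks is simply: track which single component of $\pi(C)$ carries the unit of $K_S\cdot\pi(C)$, and use Hodge index on a minimal general-type surface to forbid components with $K_S\cdot(\,\cdot\,)=0$ and nonnegative self-intersection.
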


\begin{proof}
Assume $K_W^2-K_S^2=1$ and $r-d=1$. We have $\lambda=K_S \cdot \pi(C) = 1$. We do not have a long diagram in this case. Since $K_W^2-K_S^2=-m+r-d+1$, we also have $m=r-d=1$. So, we have two possible T-chains: $[2,5]$ and $[2,3,2,\ldots,2,4]$. In the first case, the $(-1)$-curve must intersect the $(-5)$-curve twice, and so after contracting it we obtain what we claim in (A). In the case $[2,3,2,\ldots,2,4]$, we have that the $(-1)$-curve must intersect the $(-3)$-curve once, and the $(-4)$-curve once; otherwise there are problems with $K_S$ nef and $\kappa(S)=2$. That is case (B).

We now assume that $K_W^2-K_S^2>1$. In order to achieve an optimal bound, we must have a long diagram of type II. By Theorem \ref{main2} and Lemma \ref{nefestimate}, we have that either $\lambda=K_S \cdot \pi(C) = 1$ and $2s = r-d-2$ or $\lambda=K_S \cdot \pi(C) = 2$ and $2s = r-d-1$. But the second option gives the lower bound $4(K_W^2-K_S^2)-3$, and so it is not optimal. For the first option we have, according to the proof of Lemma \ref{nefestimate}, the following cases:

\textbf{(I)} $\alpha=s+5$, the T-chain is $[2,\ldots,2,s+5,2,\ldots,2,3,2,\ldots,2,s+2]$, and there is a $(-1)$-curve connecting the last $(-2)$-curve of $C$ with the $(-s-5)$-curve. After blowing-down that $(-1)$-curve and the $s$ $(-2)$-curves, we obtain a nodal curve with self-intersection $-1$, a $(-3)$-curve and a $(-s-2)$-curve. Since $\lambda=1$, then the $(-3)$-curve must become a $(-2)$-curve in $S$, as must the $(-s-2)$-curve. Since $s \geq 1$, this case is impossible, since the only possible scenario is to have a cycle of $(-2)$-curves, but $S$ is a surface of general type.

\textbf{(II)} $\alpha=s+6$, the T-chain is $[2,\ldots,2,s+6,2,\ldots,2,s+2]$, and there is a $(-1)$-curve connecting the last $(-2)$-curve of $C$ with the $(-s-6)$-curve. After blowing-down that $(-1)$-curve and the $s$ $(-2)$-curves, we obtain a nodal curve with self-intersection $-2$, and a $(-s-2)$-curve. The $(-s-2)$-curve cannot contribute to the intersection with $K_S$ since $\lambda=1$. So it must become a $(-2)$-curve, and so any $(-1)$-curve to be contracted must intersect it at one point. That means such a $(-1)$-curve must also intersect the nodal $(-2)$-curve, but this can only happen once, because again $\lambda=1$. Therefore $s=1$ and we have the case (C).

\textbf{(III)} $\alpha=s+5$, the T-chain is $[2,\ldots,2,3,2,\ldots,2,s+5,s+2]$, and there is a $(-1)$-curve connecting the last $(-2)$-curve of $C$ with the $(-s-5)$-curve. After blowing-down that $(-1)$-curve and the $s$ $(-2)$-curves, we obtain a curve with self-intersection $-4$, and a $(-s-2)$-curve. Since $\lambda=1$ and $S$ is of general type, then the only possible option is that the $(-4)$-curve becomes a $(-3)$-curve in $S$, and the $(-s-2)$-curve becomes a $(-2)$-curve. Then $s = 1$ and we are in case (D).

Other possible cases from the proof of Lemma \ref{nefestimate} have $\lambda >1$, so we have described all cases for which equality is attained.
\end{proof}

We now give a series of examples showing that all cases of Theorem \ref{k=2} are realizable.

\textbf{(A)} Let $t \geq 4$ be an integer. In $\P^2$, consider a line $F$, and a curve $\Gamma$ of degree $2t$ which has precisely $3$ singularities at three points of $F$: $p_1$ where it has a $(2t-6)$-simple multiple point, $p_2$ where it has a triple point locally of the form $(y^2-x^5)(y-x^2)$ where $F=(x=0)$, and $p_3$ where it has a triple point locally of the form $y(y-x^2)(y+x^2)$ where $F=(x=0)$. Such a $\Gamma$ exists, and there are several free parameters. Let $\sigma \colon Y \to \P^2$ be the blow up of $\P^2$ five times, which resolves the singularities of $\Gamma$. Let $\Gamma'$ be the strict transform. Then $\Gamma'^2=24(t-3)$, and $g(\Gamma')=2(5t-16)$. We also have $K_Y \sim -3H +E_1 +E_2 +2 E_3 +E_4 + 2E_5$ where $E_1$ is over $p_1$, $E_2$ and $E_3$ are over $p_2$, and $E_4$ and $E_5$ are over $p_3$, and $$ \sigma^*(2t H) \sim \Gamma' +(2t-6) E_1 + 3 E_2+3 E_4+6 E_3 + 6 E_5.$$ More precisely, $E_3$ and $E_5$ are $(-1)$-curves, and $E_2$ and $E_4$ are $(-2)$-curves.

Consider the double cover $f \colon X \to Y$ branched along $\Gamma' +E_2+E_4$. Then $K_X \sim f^*(K_Y + \frac{1}{2}(E_2+E_4+\Gamma'))$, and so
$$K_X \sim (t-4) f^*(L) + f^*(F') + f^*(E_1) + f^*(E_2) + f^*(E_4) $$ where $L$ is the strict transform of a general line passing through $p_1$, and $F'$ is the strict transform of $F$ under $\sigma$. We note that $f^*(E_2)$ and $f^*(E_4)$ are $(-1)$-curves. We blow them down to obtain the surface $S$. We have $K_{S} \sim (t-4) f^*(L) + f^*(F') + f^*(E_1),$ where $f^*(L)$ is a general fiber of the genus two fibration, $f^*(F')$ is a $(-2)$-curve, and $f^*(E_1)$ is a $2$-section of the fibration. The surface $S$ is minimal. The invariants of $S$ are $K_{S}^2=4(t-4)$, $\chi(\O_S)=2(t-3)$, $q(S)=h^1(Y,(t-3)\sigma^*(H)-(t-4)E_1-E_3-E_5)=0$,
and so $p_g(S)=2t-7$, $K_{S}^2=4(t-4)$. In this way, when $t>4$, we have that $S$ is of general type. When $t=4$, we have that $\kappa(S)=1$, and $K_{S}$ is a fibre of the elliptic fibration. The singular surface $W$ is obtained by blowing up the node of the nodal $(-1)$ curve, and then blowing down the chain $[5,2]$, where the $(-2)$-curve is $f^*(F')$. The nodal $(-1)$ curve is $E_3$. Notice that $E_5$ is an elliptic curve (in $S$) with self-intersection $(-1)$. For a general choice of $\Gamma'$, we have that $K_W$ is ample. We do not know if $W$ has a $\Q$-Gorenstein smoothing.

\textbf{(B)} and \textbf{(D)} Let $1 \leq \mu \leq 5$ be an integer. Consider in $\P^2$ a line $L$, four points $P_1, P_2, P_3, P_4 \in L$, and a degree $10$ plane curve $\Gamma$ having a singularity of type $(x^2-y^{2\mu})$ at $P_1$ transversal to $L$, a cusp at $P_2$ transversal to $L$, a singularity of type $(x^5-y^{10})$ at $P_3$ transversal to $L$, a simple point at $P_4$, and smooth everywhere else. For example, for $\mu=5$ we can take $L=\{ x=0 \}$, $P_1=[0,0,1]$, $P_2=[0,1,1]$, $P_3=[0,1,0]$, $P_4=[0,a,1]$, and $$\Gamma= \{ -a y^2z^8+(2a+1)y^3z^7+(-a-2)y^4z^6+y^5z^5+(a_{1, 4, 5}y^4z^5+a_{1, 4, 5}y^2z^7 $$ $$ -2a_{1, 4, 5}y^3z^6)x+((-a_{2, 3, 5}-a_{2, 2, 6})y^4z^4+a_{2, 2, 6}y^2z^6+a_{2, 3, 5}y^3z^5)x^2+(a_{3, 3, 4}y^3z^4+ $$ $$a_{3, 2, 5}y^2z^5)x^3+(a_{4, 2, 4}y^2z^4+a_{4, 3, 3}y^3z^3)x^4+(a_{5, 1, 4}yz^4+a_{5, 2, 3}y^2z^3)x^5+ $$ $$(a_{6, 1, 3}yz^3+a_{6, 2, 2}y^2z^2)x^6+a_{7, 1, 2}x^7yz^2+a_{8, 1, 1}x^8yz+a_{10, 0, 0}x^{10} =0 \}$$ for some general coefficients $a$, $a_{i,j,k}$. We resolve the $(5,5)$ singularity with two blow-ups over $P_3$, and then contract the proper transform of the tangent line at $P_3 \in \Gamma$, to obtain the Hirzebruch surface $\F_2$. The proper transforms of $L$ and $\Gamma$, which we denote by $G_0$ and $\Gamma$, are a fibre and a curve in the linear system $|5C_0 + 10 G|$ respectively, where $C_0$ is the $(-2)$-curve, and $G$ is a general fibre of $\F_2 \to \P^1$.

We note that $\Gamma^2=50$, $\Gamma \cdot K_{\F_2}=-20$, and so $p_a(\Gamma)=16$. Let $\sigma \colon Y \to \F_2$ be the composition of the two blow-ups which minimally log-resolve $G_0+\Gamma$. Let $G_1,\ldots,G_{\mu}$ be the exceptional divisors over $P_1$, and $E_1, E_3, E_2$ be the exceptional over $P_2$. Let us denote the strict transform of $\Gamma$ by $\Gamma'$. Then $\Gamma'^2=50-4\mu-4-2=44-4\mu$, and $K_Y^2=8-\mu-3=5-\mu$. Let $C'_0$ and $G'$ be the proper transforms of $C_0$ and $G$ respectively. Then $$ K_Y \sim -2C'_0 -4G' + \sum_{i=1}^{\mu} i G_i + E_1+2E_2+4E_3, $$ and $\Gamma'+E_2+C'_0$ is 2-divisible, so we have a double cover $f \colon \tilde{S} \to Y$ branched along $\Gamma'+E_2+C'_0$. By the double cover formulas, we have $$K_{\tilde{S}} \equiv f^*(C'_0 + G'+E_2+E_3),$$ $q(\tilde{S})=0$, $p_g(\tilde{S})=2$, and $K_{\tilde{S}}^2=-2$. The preimages of $E_2$ and $C'_0$ are $(-1)$-curves in $\tilde{S}$, and the preimage of $E_3$ is a $(-2)$-curve. After contracting those three curves, we obtain a surface $S$ of general type with $K_S^2=1$, $p_g(S)=2$, and $q(S)=0$. The preimage in $\tilde{S}$ of $\sum_{i=1}^{\mu} G_i$ is a chain of $2\mu-1$ $(-2)$-curves. The preimage of the strict transform of $G_0$ is a $(-4)$-curve in $\tilde{S}$. The preimage of $E_1+E_2+E_3$ becomes a chain of two $(-2)$-curves in $S$.

Therefore, if $\mu>1$, we obtain a configuration of curves as wanted for (B) with $d=2\mu+1$ $(-2)$-curves, and so we can construct $W$. We can show that $K_W$ is ample by considering the genus $2$ pencil in $S$, since all fibres except $f^{-1}(G_0+A+B)$ are irreducible by choosing general parameters for $\Gamma$. We do not know if $W$ is smoothable. When $\mu=1$, we obtain the case (D), and analogous comments hold.

\textbf{(C)} Consider again the Hirzebruch surface $\F_2$ with the $(-2)$-curve $C_0$, and the general fibre $G$. Let us fix a fibre $G_0$. As in the previous example, there are more than enough parameters to have $\Gamma \in |5C_0+10G|$ irreducible with a tacnode at some point in $G_0$, whose direction is transversal to $G_0$, tangent with multiplicity $2$ at another point of $G_0$, and smooth everywhere else. We note that $\Gamma^2=50$, $\Gamma \cdot K_{\F_2}=-20$, and so $p_a(\Gamma)=16$. Let $\sigma \colon Y \to \F_2$ be the composition of the two blow-ups which resolve $\Gamma$. Let us denote the strict transform of $\Gamma$ by $\Gamma'$. Then $K_Y \sim \sigma^*(K_{\F_2})+A+2B$ where $A$, $B$ are the exceptional curves of $\sigma$. Let $G, G_0, C_0$ be the strict transforms in $Y$ of the corresponding curves in $\F_2$.

We have that $\Gamma'+C_0$ is 2-divisible, and so there is a double cover $f \colon S' \to Y$ with $S'$ smooth. The invariants of $S'$ are $K_{S'}^2=0$, $p_g(S')=2$, and $q(S')=0$. Also $K_{S'} \sim f^*(C_0+G)$, and $f^{-1}(C_0)=C'_0$ is a $(-1)$-curve. Therefore, the blow-down $S' \to S$ of $C'_0$ is a minimal surface of general type with $K_S^2=1$. Notice that the image of $f^{-1}(G_0+A+B)$ is the wanted configuration in option (C) of Theorem \ref{k=2}. We can show that $K_W$ is ample by considering the genus $2$ fibration in $S'$, since all fibres except $f^{-1}(G_0+A+B)$ are irreducible. Also $$\Omega_{S'}^1(\log (f^{-1}(G_0+A+B)+C'_0+f^{-1}(G))) \subseteq \Omega_{S'}^1(\log (f^{-1}(G_0+A+B))) \otimes \O(K_{S'}),$$ and $\Omega_{S'}^1(\log (f^{-1}(G_0+A+B)+C'_0+f^{-1}(G)))$ has global sections by means of the residue sequence and the Chern map, because $f^{-1}(G_0+A+B) \sim f^{-1}(G)$. Thus $W$ has obstruction, and we do not know if it is smoothable.



\begin{thebibliography}{99}

\bibitem[A94]{A94}
    V. Alexeev,
    \emph{Boundedness and $K^2$ for log surfaces},
    Internat. J. Math. 5(1994), no. 6, 779--810.

\bibitem[AM04]{AM04}
    V. Alexeev, S. Mori,
    \emph{Bounding singular surfaces of general type},
    Algebra, arithmetic and geometry with applications ({W}est {L}afayette, {IN}, 2000), 143--174, Springer, Berlin, 2004.

\bibitem[BHPV04]{BHPV04}
    W. P. Barth, K. Hulek, C. A. M. Peters, A. Van de Ven,
    \emph{Compact complex surfaces},
    Ergebnisse der Mathematik und ihrer Grenzgebiete. 3. Folge., second edition, vol. 4, Springer-Verlag, Berlin, 2004.

\bibitem[DM69]{DM69}
    P. Deligne, D. Mumford,
    \emph{The irreducibility of the space of curves of given genus},
    Inst. Hautes \'Etudes Sci. Publ. Math. 36(1969), 75--109.

\bibitem[ES17]{ES17}
    J. Evans, I. Smith,
    \emph{Bounds on Wahl singularities from symplectic topology},
    Pre-print 2017.

\bibitem[FPR17]{FPR17}
    M. Franciosi, R. Pardini, S. Rollenske,
    \emph{Gorenstein stable surfaces with $K_X^2=1$ and $p_g>0$},
    Math. Nachr. 290(2017), no. 5-6, 794--814.

\bibitem[Gie77]{Gie77}
    D. Gieseker,
    \emph{Global moduli for surfaces of general type},
    Invent. Math. 43(1977), no. 3, 233--282.

\bibitem[H11]{H11}
    P. Hacking,
    \emph{Compact moduli spaces of surfaces of general type},
    Compact moduli spaces and vector bundles, 1--17, Contemp. Math., 564, Amer. Math. Soc., Providence, RI, 2012.

\bibitem[HP10]{HP2010}
    P. Hacking, and Y. Prokhorov,
    \emph{Smoothable del Pezzo surfaces with quotient singularities},
    Compositio Math. 146(2010), 169--192.

\bibitem[HTU17]{HTU17}
    P. Hacking, J. Tevelev, G. Urz\'ua,
    \emph{Flipping surfaces},
    J. Algebraic Geom. 26(2017), no. 2, 279--345.

\bibitem[K92]{K92}
    Y. Kawamata,
    \emph{Moderate degenerations of algebraic surfaces},
    Complex algebraic varieties (Bayreuth, 1990), 113--132, Lecture Notes in Math., 1507, Springer, Berlin, 1992.

\bibitem[KM98]{KM98}
    J. Koll\'ar, S. Mori,
    \emph{Birational geometry of algebraic varieties},
    CTM 134(1998).

\bibitem[KSB88]{KSB88}
    J. Koll\'ar, N. I. Shepherd-Barron,
    \emph{Threefolds and deformations of surface singularities},
    Invent. Math. 91(1988), 299--338.

\bibitem[K90]{K90}
    J. Koll\'ar,
    \emph{Projectivity of complete moduli},
    J. Differential Geom. 32(1990), no. 1, 235--268.

\bibitem[K17]{K17}
    J. Koll\'ar,
    \emph{Families of varieties of general type},
    Available on his webpage, July 2017.


\bibitem[La03]{La03}
    A. Langer,
    \emph{Logarithmic orbifold Euler numbers of surfaces with applications},
    Proc. London Math. Soc. (3) 86(2003), no. 2, 358--396.

\bibitem[LP07]{LP07}
    Y. Lee, J. Park.
    \emph{A simply connected surface of general type with $p_g=0$ and $K^2=2$},
    Invent. Math. 170(2007), 483--505.

\bibitem[L99]{L99}
    Y. Lee,
    \emph{Numerical bounds for degenerations of surfaces of general type},
    Internat. J. Math. 10(1999), no.1, 79--92.

\bibitem[LW86]{LW86}    
    E. Looijenga, J. Wahl,
    \emph{Quadratic functions and smoothing surface singularities},
    Topology 25(1986), no.3, 261--291. 

\bibitem[Per90]{Per90}
    U. Persson,
    \emph{Configurations of Kodaira fibers on rational elliptic surfaces},
    Math. Z. 205(1990), no.1, 1--17.

\bibitem[R17]{R14}
    J. Rana,
    \emph{A boundary divisor in the moduli space of stable quintic surfaces},
    Internat. J. Math. 28(2017), no. 4, 1750021.

\bibitem[SU16]{SU14}
    A. Stern, G. Urz\'ua,
    \emph{KSBA surfaces with elliptic quotient singularities, $\pi_1=1$, $p_g=0$, and $K^2=1,2$},
    Israel J. Math. 214(2016), no. 2, 651--673.

\bibitem[S89]{S89}
    J. Stevens,
    \emph{On the versal deformation of cyclic quotient singularities},
    Singularity theory and its applications, {P}art {I}({C}oventry, 1988/1989), Lecture Notes in Math. 1462, Springer, Berlin (1991), 302--319.

\bibitem[TZ92]{TZ92}
    S. Tsunoda, De-Qi Zhang,
    \emph{Noether's inequality for noncomplete algebraic surfaces of general type}
    Publ. Res. Inst. Math. Sci. 28 (1992), no. 1, 21--38.

\bibitem[Urz16a]{Urz13a}
    G. Urz\'ua,
    \emph{Identifying neighbors of stable surfaces},
    Ann. Sc. Norm. Super. Pisa Cl. Sci. vol. XVI, 4(2016), 1093--1122.

\bibitem[Urz16b]{Urz13}
    G. Urz\'ua,
    \emph{$\Q$-Gorenstein smoothings of surfaces and degenerations of curves},
    Rend. Semin. Mat. Univ. Padova 136(2016), 111--136.

\bibitem[W81]{W81}    
    J. Wahl,
    \emph{Smoothings of normal surface singularities}, 
    Topology 20(1981), no.3, 219--246. 



\end{thebibliography}
\end{document}